\documentclass[11pt]{scrartcl}
\usepackage[utf8]{inputenc}
\usepackage[T1]{fontenc}
\usepackage{lmodern}
\usepackage{alphabeta}

\usepackage{titling}
\usepackage{xspace}
\usepackage{soul} % for \st to strike out text

\usepackage[letterpaper, top=25.4mm, bottom=25.4mm, left=25.4mm, right=25.4mm, includefoot]{geometry}

\DeclareSectionCommand[%
level=4,
indent=0pt,
beforeskip=1ex plus 1ex minus .2ex,
afterskip=-1em,
font={},
tocindent=7em,
tocnumwidth=4.1em,
counterwithin=subsubsection
]{paragraph}

\usepackage[inline]{enumitem}

\usepackage{dsfont}
\usepackage{setspace}

\usepackage{bm}
\usepackage{microtype}
\usepackage{amsmath}
\usepackage{amssymb}
\usepackage{amsfonts}
\usepackage{mathtools}
\usepackage[mathscr]{euscript}

\usepackage[hyphens]{url}
\usepackage{amsthm}

\usepackage{tikz}
\usepackage{xcolor}
\usepackage{subcaption}
\usepackage{graphicx}
\usepackage{wrapfig}

\usepackage{hyperref}
\usepackage{zref-clever}

\usepackage{nicefrac}
\usepackage[textwidth=1.5in]{todonotes}

\colorlet{myGreen}{green!50!black}
\colorlet{myLightgreen}{green}
\colorlet{myRed}{red!90!black}
\definecolor{myBlue}{rgb}{0.25, 0.0, 1.0}
\definecolor{myLightBlue}{rgb}{0.39, 0.58, 0.93}
\colorlet{myViolet}{myBlue!55!myRed}
\definecolor{myOrange}{rgb}{1.0, 0.66, 0.07}

\definecolor{CornflowerBlue}{rgb}{0.39, 0.58, 0.93}
\definecolor{DarkGoldenrod}{rgb}{0.72, 0.53, 0.04}
\definecolor{BritishRacingGreen}{rgb}{0.0, 0.26, 0.15}
\definecolor{DarkMagenta}{rgb}{0.55, 0.0, 0.55}
\definecolor{AO}{rgb}{0.0, 0.5, 0.0}
\definecolor{BostonUniversityRed}{rgb}{0.8, 0.0, 0.0}
\definecolor{myRed}{rgb}{0.8, 0.0, 0.0}
\definecolor{DarkMidnightBlue}{rgb}{0.0, 0.2, 0.4}
\definecolor{DarkTangerine}{rgb}{1.0, 0.66, 0.07}
\definecolor{AppleGreen}{rgb}{0.55, 0.71, 0.0}
\definecolor{BrightUbe}{rgb}{0.82, 0.62, 0.91}
\definecolor{Amethyst}{rgb}{0.6, 0.4, 0.8}
\definecolor{DarkGray}{rgb}{0.52, 0.52, 0.51}
\definecolor{Gray}{rgb}{0.66, 0.66, 0.66}
\definecolor{BananaYellow}{rgb}{1.0, 0.88, 0.21}
\definecolor{Amber}{rgb}{1.0, 0.75, 0.0}
\definecolor{LightGray}{rgb}{0.83, 0.83, 0.83}
\definecolor{PrincetonOrange}{rgb}{1.0, 0.56, 0.0}
\definecolor{DeepCarrotOrange}{rgb}{0.91, 0.41, 0.17}
\definecolor{CarrotOrange}{rgb}{0.93, 0.57, 0.13}
\definecolor{MidnightBlue}{rgb}{0.1, 0.1, 0.44}
\definecolor{Magenta}{rgb}{0.50, 0.0, 0.50}
\definecolor{BrightPink}{rgb}{1.0, 0.0, 0.5}
\definecolor{BrilliantRose}{rgb}{1.0, 0.33, 0.64}
\definecolor{ChromeYellow}{rgb}{1.0, 0.65, 0.0}
\definecolor{HotMagenta}{rgb}{1.0, 0.11, 0.81}
\definecolor{Auburn}{rgb}{0.43, 0.21, 0.1}
\definecolor{BrightTurquoise}{rgb}{0.03, 0.91, 0.87}
\definecolor{DarkCyan}{rgb}{0.0, 0.55, 0.55}
\definecolor{Salmon}{rgb}{1.0, 0.55, 0.41}

\vfuzz \hfuzz
\setlist[itemize]{topsep=0pt,partopsep=0pt,itemsep=0pt,parsep=0pt}
\setlist[itemize,1]{label={\small\textbullet}}
\setlist[itemize,2]{label={\tiny\textbullet}}
\setlist[itemize,3]{label=$\cdot$}
\setlist[enumerate]{topsep=0pt,partopsep=0pt,itemsep=0pt,parsep=0pt}
\setlist[enumerate,1]{label=\roman*)}
\setlist[enumerate,2]{label=\alph*)}
\setlist[enumerate,3]{label=\arabic*)}

\hypersetup{
colorlinks=true,
linkcolor=AO!65!black,
citecolor=AO!65!black,
urlcolor=AppleGreen!65!black,
bookmarksopen=true,
bookmarksnumbered,
bookmarksopenlevel=2,
bookmarksdepth=3
}

\newcommand*\samethanks[1][\value{footnote}]{\footnotemark[#1]}

\makeatletter
\newtheorem*{rep@theorem}{\rep@title}
\newcommand{\newreptheorem}[2]{%
\newenvironment{rep#1}[1]{%
 \def\rep@title{#2~\ref{##1}}%
 \begin{rep@theorem}}%
 {\end{rep@theorem}}}
\makeatother

\newreptheorem{theorem}{Theorem}
\newreptheorem{lemma}{Lemma}

\theoremstyle{definition}

\newtheorem{environment}{Environment}[section]

\newtheorem{lemma}[environment]{Lemma}
\AddToHook{env/lemma/begin}{\zcsetup{countertype={environment=lemma}}}
\zcRefTypeSetup{lemma}{
Name-sg = Lemma ,
name-sg = Lemma ,
Name-pl = Lemmas ,
name-pl = Lemmas ,
}

\newtheorem*{lemma*}{Lemma}
\AddToHook{env/lemma*/begin}{\zcsetup{countertype={environment=lemma*}}}
\zcRefTypeSetup{lemma*}{
Name-sg = Lemma ,
name-sg = Lemma ,
Name-pl = Lemmas ,
name-pl = Lemmas ,
}

\AddToHook{env/proposition/begin}{\zcsetup{countertype={environment=proposition}}}
\zcRefTypeSetup{proposition}{
Name-sg = Proposition ,
name-sg = Proposition ,
Name-pl = Propositions ,
name-pl = Propositions ,
}

\newtheorem{corollary}[environment]{Corollary}
\AddToHook{env/corollary/begin}{\zcsetup{countertype={environment=corollary}}}
\zcRefTypeSetup{corollary}{
Name-sg = Corollary ,
name-sg = Corollary ,
Name-pl = Corollaries ,
name-pl = Corollaries ,
}

\newtheorem{theorem}[environment]{Theorem}
\AddToHook{env/theorem/begin}{\zcsetup{countertype={environment=theorem}}}
\zcRefTypeSetup{theorem}{
Name-sg = Theorem ,
name-sg = Theorem ,
Name-pl = Theorems ,
name-pl = Theorems ,
}

\newtheorem*{theorem*}{Theorem}
\AddToHook{env/theorem*/begin}{\zcsetup{countertype={environment=theorem*}}}
\zcRefTypeSetup{theorem*}{
Name-sg = Theorem ,
name-sg = Theorem ,
Name-pl = Theorems ,
name-pl = Theorems ,
}

\newtheorem{conjecture}[environment]{Conjecture}
\AddToHook{env/conjecture/begin}{\zcsetup{countertype={environment=conjecture}}}
\zcRefTypeSetup{conjecture}{
Name-sg = Conjecture ,
name-sg = Conjecture ,
Name-pl = Conjectures ,
name-pl = Conjectures ,
}

\newtheorem*{hypothesis*}{Hypothesis}
\AddToHook{env/hypothesis*/begin}{\zcsetup{countertype={environment=hypothesis*}}}
\zcRefTypeSetup{hypothesis*}{
Name-sg = Hypothesis ,
name-sg = Hypothesis ,
Name-pl = Hypotheses ,
name-pl = Hypotheses ,
}

\newtheorem{observation}[environment]{Observation}
\AddToHook{env/observation/begin}{\zcsetup{countertype={environment=observation}}}
\zcRefTypeSetup{observation}{
Name-sg = Observation ,
name-sg = Observation ,
Name-pl = Observations ,
name-pl = Observations ,
}

\AddToHook{env/example/begin}{\zcsetup{countertype={environment=example}}}
\zcRefTypeSetup{example}{
Name-sg = Example ,
name-sg = Example ,
Name-pl = Examples ,
name-pl = Examples ,
}

\AddToHook{env/remark/begin}{\zcsetup{countertype={environment=remark}}}
\zcRefTypeSetup{remark}{
Name-sg = Remark ,
name-sg = Remark ,
Name-pl = Remarks ,
name-pl = Remarks ,
}

\zcRefTypeSetup{equation}{
Name-sg = Equation ,
name-sg = Equation ,
Name-pl = Equations ,
name-pl = Equations ,
}

\zcRefTypeSetup{chapter}{
Name-sg = Chapter ,
name-sg = Chapter ,
Name-pl = Chapters ,
name-pl = Chapters ,
}

\zcRefTypeSetup{section}{
Name-sg = Section ,
name-sg = Section ,
Name-pl = Sections ,
name-pl = Sections ,
}

\zcRefTypeSetup{appendix}{
Name-sg = Appendix ,
name-sg = Appendix ,
Name-pl = Appendices ,
name-pl = Appendices ,
}

\zcRefTypeSetup{algorithm}{
Name-sg = Algorithm ,
name-sg = Algorithm ,
Name-pl = Algorithms ,
name-pl = Algorithms ,
}

\AddToHook{env/notation/begin}{\zcsetup{countertype={environment=notation}}}
\zcRefTypeSetup{notation}{
Name-sg = Notation ,
name-sg = Notation ,
Name-pl = Notations ,
name-pl = Notations ,
}

\newtheorem{question}[environment]{Question}
\AddToHook{env/question/begin}{\zcsetup{countertype={environment=question}}}
\zcRefTypeSetup{question}{
Name-sg = Question ,
name-sg = Question ,
Name-pl = Questions ,
name-pl = Questions ,
}

\AddToHook{env/problem/begin}{\zcsetup{countertype={environment=problem}}}
\zcRefTypeSetup{problem}{
Name-sg = Problem ,
name-sg = Problem ,
Name-pl = Problems ,
name-pl = Problems ,
}

\AddToHook{env/claim/begin}{\zcsetup{countertype={environment=claim}}}
\zcRefTypeSetup{claim}{
Name-sg = Claim ,
name-sg = Claim ,
Name-pl = Claims ,
name-pl = Claims ,
}

\AddToHook{env/definition/begin}{\zcsetup{countertype={environment=definition}}}
\zcRefTypeSetup{definition}{
Name-sg = Definition ,
name-sg = Definition ,
Name-pl = Definitions ,
name-pl = Definitions ,
}

\zcRefTypeSetup{figure}{
Name-sg = Figure ,
name-sg = Figure ,
Name-pl = Figures ,
name-pl = Figures ,
}

\usetikzlibrary{calc}
\usetikzlibrary{fit}
\usetikzlibrary{decorations}
\usetikzlibrary{decorations.pathmorphing}
\usetikzlibrary{decorations.text}
\usetikzlibrary{shapes,hobby}

\tikzset{
	position/.style args={#1:#2 from #3}{
		at=($(#3)+(#1:#2)$)
	}
}

\tikzset{
  v:main/.style = {draw, circle, scale=0.8, thick,fill=black,inner sep=0.7mm},
  v:ghost/.style = {inner sep=0pt,scale=1},
  >={latex},
  e:marker/.style = {line width=8.5pt,line cap=round,opacity=0.35,color=DarkGoldenrod},
  e:main/.style = {line width=1pt},
}

\definecolor{orchid}{RGB}{180,100,163}

\newcommand{\EP}{Erd\H{o}s-P\'osa}

\usepackage{makecell}

\title{An Erd\H{o}s-P\'osa theorem for cycles and faces of distinct lengths
\thanks{\href{mailto:pascal.gollin@famnit.upr.si}{pascal.gollin@famnit.upr.si},
\href{mailto:m.gorsky@pm.me}{m.gorsky@pm.me},
\href{mailto:research@meikehatzel.com}{research@meikehatzel.com},
\href{mailto:kevin.hendrey1@monash.edu}{kevin.hendrey1@monash.edu},
\href{mailto:tony@ibs.re.kr}{tony@ibs.re.kr},
\href{mailto:cmcfarland30@gatech.edu}{cmcfarland30@gatech.edu},
\href{mailto:msokolow@mpi-inf.mpg.de}{msokolow@mpi-inf.mpg.de},
\href{mailto:wiederrecht@kaist.ac.kr}{wiederrecht@kaist.ac.kr},
\href{mailto:wollan@di.uniroma1.it}{wollan@di.uniroma1.it}.}}
\predate{}
\date{}
\postdate{}

\preauthor{}
\DeclareRobustCommand{\authorthing}{
	\begin{center}
        \begin{tabular}{ccc}
        J.~Pascal Gollin\thanks{FAMNIT, University of Primorska, Koper, Slovenia. Supported by the Slovenian Research and Innovation Agency (research project N1-0370).} &
        Maximilian Gorsky\thanks{Discrete Mathematics Group, Institute for Basic Science (IBS), Daejeon, South Korea. Supported by the Institute for Basic Science (IBS-R029-C1).} &
        Meike Hatzel\thanks{Technical University Darmstadt, Darmstadt, Germany} \\
        Kevin Hendrey\thanks{School of Mathematics, Monash University, Melbourne, Australia. Supported by the Australian Research Council} &
        Tony Huynh\samethanks[3] &
        Caleb McFarland\thanks{School of Mathematics, Georgia Institute of Technology, Atlanta, USA. Supported in part by the Georgia Tech ARC-ACO Fellowship and in part by the National Science Foundation under Grant No.~DMS-2452111.} \\
        Marek Sokołowski\thanks{Max Planck Institute for Informatics, Saarland Informatics Campus, Saarbr\"{u}cken, Germany} &
		Sebastian Wiederrecht\thanks{School of Computing, KAIST, Daejeon, South Korea} &
        Paul Wollan\thanks{Sapienza University of Rome, Rome, Italy} 
    \end{tabular}
\end{center}}
\author{\authorthing}
\postauthor{}

\begin{document}
\maketitle

\begin{abstract}
We show that for every $k \in \mathbb{N}$, every graph $G$ contains $k$ vertex-disjoint cycles of different lengths, or there exists a set $X \subseteq V(G)$ with $|X| \in \mathcal{O}(k^6\mathsf{polylog}(k))$ such that $G-X$ has at most $k-1$ cycle lengths.   

We also prove analogous results for facial lengths of embedded graphs.
Let $G$ be a graph with a closed 2-cell embedding $\psi$ on a surface $\Sigma$ of Euler genus $g$, let $c$ be a colouring of the faces $\mathcal{F}(\psi)$ of $\psi$, and let $R(G,\psi)$ be the radial graph of $(G, \psi)$.
Then there exist $k$ faces $F_1, \ldots , F_k \in \mathcal{F}(\psi)$ that are given pairwise distinct colours by $c$ and are pairwise at distance at least $d$ in $\psi$, or there exists a set $X \subseteq V(G)$ of order at most $\mathcal{O}(k^2dg)$ such that $|\{ c(F) \mid F \in \mathcal{F}(\psi) \text{ and } V(F) \cap \bigcup_{x \in X} N^d_{R(G,\psi)}(x) = \emptyset \}| \leq k(k+2)$.

Finally, using a result from additive combinatorics, we show that there are subdivided ladders with only a small number of cycle lengths.
This suggests that it may be difficult to improve our bounds.  
\end{abstract}
\let\sc\itshape
\thispagestyle{empty}

\newpage

\setcounter{page}{1}

\section{Introduction} \label{sec:intro}
The main problem we address in this paper is whether a graph $G$ contains many vertex-disjoint cycles of different lengths.  Bensmail, Harutyunyan, Le, Li, and Lichiardopol~\cite{BHLLL17} proved that the answer is yes for graphs with large minimum degree.   

\begin{theorem}[Bensmail et al.\ \cite{BHLLL17}] \label{thm:mindegree}
    For every $k \geq 1$, every graph of minimum degree at least $\frac{5k^2+5k-2}{2}$ contains $k$ vertex-disjoint cycles of different lengths.  
\end{theorem}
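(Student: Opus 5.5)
The plan is induction on $k$, peeling off one short cycle at a time and deleting it together with all its neighbours. We keep a degree threshold $d_k = \frac{5k^2+5k-2}{2}$; note that $d_1 = 4$ and $d_k - d_{k-1} = 5k$. The case $k=1$ is trivial, since minimum degree at least $2$ already forces a cycle.

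For the inductive step, let $G$ have minimum degree at least $d_k$. We want a cycle $C$ that is short enough, say $|V(C)| \le$ something like $k$ or $2k$, and whose removal does not destroy too much degree. The standard trick is not to delete $C$ alone but to choose $C$ as a shortest cycle, or a short cycle minimising some potential. Then every vertex outside $C$ has few neighbours on $C$; for a shortest cycle of length $\ell \ge 4$, an outside vertex has at most $2$ neighbours on $C$, or else we get a shorter cycle.

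First case: the girth is small. If $G$ contains a triangle, we try to find cycles of lengths $3,4,\dots,k+2$ greedily. Alternatively we take $C$ to be a shortest cycle, of length $\ell$, delete it, and apply induction to $G-V(C)$ with the extra requirement that the $k-1$ cycles found there have lengths different from $\ell$. To handle this requirement, I would strengthen the induction hypothesis: graphs of minimum degree at least $d_k$ contain $k$ disjoint cycles of distinct lengths, all of which are at most some bound, \emph{avoiding any one prescribed length}. Equivalently, I would aim to find cycles of distinct lengths all at most about $2k$ or $3k$ by a counting argument.

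The degree loss works as follows. Each outside vertex loses at most $2$ neighbours (when $\ell\ge4$), or for triangles, vertices adjacent to all of the triangle lose $3$. That is much less than the $5k$ budget. So the real constraint is distinctness, not degree.

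For distinctness, the concrete approach is this. In a graph of minimum degree $\delta$, the neighbourhood and breadth-first structure give cycles of many lengths. For example, take a longest path $P$ with endpoint $v$. All $\delta$ neighbours of $v$ lie on $P$, which yields cycles of $\delta$ different lengths through $v$, with lengths spanning the interval from $3$ to at least $\delta+1$.

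So the plan is as follows. Pick $v$, and use the longest-path argument inside a suitably chosen dense piece to get a cycle $C$ of some length $\ell$. Delete $V(C)$, which costs each remaining vertex at most about $k$ degree if $C$ is chosen as the shortest among cycles of large enough length, which bounds the neighbours. Then recurse. At each stage we pick a cycle whose length is not among the previously used lengths; since only $k-1$ lengths are forbidden and the longest-path fan offers many lengths, some allowed length is available.

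The main obstacle is making the chosen cycle short, so that the degree loss is $O(k)$, while keeping its length unused. Taking the $j$-th neighbour of $v$ along $P$ gives a cycle of length $j+1$ using only the first $j+1$ vertices of $P$. Among the first $2k$ neighbours, at least one gives an unused length with $j \le 2k$. An outside vertex $u$ has at most about $2\cdot$ (number of forbidden lengths) $+ O(1)$ neighbours on this cycle; otherwise chords through $u$ produce many cycle lengths, one of which we would take instead. Bounding this carefully to get loss at most $5k$ per step is exactly what produces the quadratic threshold $\frac{5k^2+5k-2}{2}$, and it is the step I expect to require the most care.
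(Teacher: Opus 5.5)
\textbf{There is a genuine gap, exactly at the step you flag.} The paper does not prove this statement; it only cites Bensmail et al. So your proposal has to stand on its own.

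Each of your two mechanisms controls only one of the two constraints. Taking a shortest cycle keeps the degree loss small but ignores distinctness. Strengthening the hypothesis to avoid \emph{one} prescribed length does not close the induction: after one deletion you must avoid two lengths, then three, and so on.

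The longest-path fan gives distinctness but not small loss. If the neighbours of $v$ on $P=p_0p_1\dots p_t$ are $p_{i_1},p_{i_2},\dots$ with $1=i_1<i_2<\cdots$, then the $j$-th one closes a cycle of length $i_j+1$, not $j+1$. Since $i_j$ can be far larger than $j$, a fan cycle of unused length may be long, and outside vertices may have many neighbours on it. Your repair, ``otherwise chords through $u$ produce many cycle lengths, one of which we would take instead'', is not yet an argument. You specify no quantity that the replacement improves, the new cycle may have the same defect with respect to another vertex, and nothing forces the process to stop. So the per-step loss bound, whether $5k$ or $2k+O(1)$, is asserted rather than proved.

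The missing idea is a single minimality criterion that handles both constraints at once; $C$ then need not be short at all. At step $j$, let $F$ be the set of the $j-1$ lengths already used. If the current graph has minimum degree $\delta\ge j+1$, the fan yields $\delta-1\ge j$ distinct cycle lengths, so some cycle has length outside $F$. Let $C$ be a \emph{shortest} such cycle, of length $\ell$.

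Now let $u\notin V(C)$ have neighbours $w_1,\dots,w_m$ on $C$ in cyclic order. For $2\le i\le m$, the cycle formed by $u$ and the forward arc of $C$ from $w_1$ to $w_i$ has length $\ell+2$ minus the length of the forward arc from $w_i$ to $w_1$. These lengths strictly increase with $i$. For $i\le m-2$ the arc from $w_i$ to $w_1$ has length at least $3$, so those $m-3$ cycles are shorter than $\ell$. By the choice of $C$, their pairwise distinct lengths all lie in $F$. Hence $m\le |F|+3=j+2$. The fan count also shows that $V(C)$ is not the whole current graph once $\delta\ge j+2$.

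Deleting $V(C)$ therefore lowers the minimum degree by at most $j+2$. The greedy procedure succeeds whenever $\delta(G)\ge\sum_{j=1}^{k-1}(j+2)+(k+1)=\frac{k^2+5k-2}{2}$, which is weaker than the stated hypothesis. This bound is tight, as the complete graph on $\frac{k^2+5k}{2}-1$ vertices shows. I believe it is also the bound Bensmail et al.\ actually prove, so the $5k^2$ in the statement appears to be a typo. With this lemma your greedy-deletion outline goes through; without it, the key step is missing.
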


Rather than a sufficient condition, we instead prove a rough structure theorem for the set of graphs which do not contain many vertex-disjoint cycles of different lengths.  The related problem of when a graph contains many vertex-disjoint \emph{induced} cycles of different lengths was very recently considered by Chudnovsky and Maier~\cite{chudnovsky2026induced}. Our theorem can be viewed as an \emph{Erd\H{o}s-P\'osa} type theorem, which is a reference to the following classic result by Erd\H{o}s and P\'osa.

\begin{theorem}[Erd\H{o}s-P\'osa Theorem~\cite{ErdosP1965Independent}]\label{thm:ep}
    There exists a function $f(k) \in \mathcal{O}(k \log k)$ such that the following holds. For every $k \in \mathbb{N}$ and every graph $G$, $G$ contains $k$ vertex-disjoint cycles, or there exists a set $X \subseteq V(G)$ with $|X| \leq f(k)$ such that $G-X$ is a forest.   
\end{theorem}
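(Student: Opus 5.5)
The plan is the classical argument: induction on $k$, driven by a structural lemma saying that a graph of minimum degree at least $3$ contains a cycle of length $O(\log n)$. The case $k=1$ is trivial, since if $G$ has no cycle it is already a forest and $X=\emptyset$ works. For the inductive step, let $G$ be a graph without $k$ vertex-disjoint cycles. We will find a short cycle $C$, with $|V(C)| \le c\log k$, such that $G - V(C)$ has no $k-1$ disjoint cycles. We then apply induction to $G - V(C)$ and take $X$ to be $V(C)$ together with the inductive hitting set. This yields $f(k) \le f(k-1) + O(\log k)$, and hence $f(k)\in\mathcal{O}(k\log k)$.

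The main work is producing such a short cycle, and we first reduce $G$. Deleting vertices of degree at most $1$ changes neither the cycles nor the disjoint-cycle packings. A degree-$2$ vertex can be suppressed by replacing its two incident edges with a single edge, allowing multigraphs, so that loops and parallel edges count as cycles. Every cycle in the reduced multigraph $H$ lifts to a cycle of $G$, and disjoint cycles lift to disjoint cycles. Conversely, a hitting set in $H$ can be pulled back to $G$: a vertex of $H$ is also a vertex of $G$, and a cycle of $G$ that meets no vertex of $H$ either does not exist or $H$ is itself a single cycle component, which we handle separately by taking one vertex from it. So we may assume $G$ is a multigraph of minimum degree at least $3$. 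Loops and parallel edges give cycles of length $1$ or $2$, which are as short as possible, so they fit the same scheme.

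The key step is a dichotomy for such $G$: either $G$ has at least $n_0(k)=\Theta(k\log k)$ vertices and then contains $k$ vertex-disjoint cycles, or it has fewer vertices and then has a cycle of length at most $2\log_2 n+1=O(\log k)$. The short cycle comes from a breadth-first search argument. If the girth exceeded $2r+1$, the ball of radius $r$ around any vertex would be a tree in which every vertex branches into at least $2$ children, giving at least $2^r$ vertices. Taking $r>\log_2 n$ would then contradict $|V(G)|=n$.

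For the large case we use the same fact greedily. We repeatedly take a shortest cycle $C$, of length $O(\log n)$, delete it, and re-reduce to minimum degree $3$. Each deletion together with the subsequent re-reduction removes only $O(\log n)$ vertices of the reduced graph, because of degree accounting: the number of edges leaving $C$ is small, since a vertex with two neighbours on a shortest cycle creates a shorter cycle, and re-reduction only removes vertices adjacent to deleted ones, in a controlled amount. Iterating $k$ times therefore succeeds whenever $n \ge C k\log k$, which contradicts the assumption that $G$ has no $k$ disjoint cycles. The main obstacle is making this degree-and-edge counting precise, so that the number of vertices lost per round stays $O(\log n)$ even after re-reduction. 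The cleanest route I would try first is to count edges instead of vertices: when the minimum degree is at least $3$ we have $|E|\ge 3n/2$, so $|E|-|V|\ge n/2$. Each round lowers the excess $|E|-|V|$ by only $O(\log n)$ while keeping it positive, and reduction leaves the excess unchanged. Therefore $\Omega(n/\log n)$ rounds are possible, which gives $k$ disjoint cycles once $n\ge Ck\log k$.
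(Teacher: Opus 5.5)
The paper gives no proof of this statement; it is quoted from Erd\H{o}s and P\'osa as background, so there is no in-paper argument to compare with. Your proposal has a genuine gap in its key step. The dichotomy you state for multigraphs of minimum degree at least $3$ is false. You claim that having at least $n_0(k)=\Theta(k\log k)$ vertices forces $k$ vertex-disjoint cycles, but $K_{3,N}$ has minimum degree $3$ and $N+3$ vertices. Every cycle of $K_{3,N}$ uses two of the three vertices on the small side, so it does not even contain two disjoint cycles.

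The counting breaks exactly where you suspected. Your observation that an outside vertex cannot have two neighbours on a long shortest cycle $C$ only limits how many edges each outside vertex sends to $C$. It says nothing about how many edges leave $C$, because vertices of $C$ can have unbounded degree. So deleting $C$ can destroy $\Theta(n)$ edges, and the re-reduction can swallow almost everything. In $K_{3,N}$, removing one $4$-cycle drops $|E|-|V|$ from $2N-3$ to $-1$ and leaves a star that reduces to nothing.

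Without the dichotomy, your girth bound only yields a cycle with $O(\log n)$ branch vertices. The recursion $f(k)\le f(k-1)+O(\log n)$ then gives no bound in terms of $k$. The fact your induction really needs is that a minimum-degree-$3$ multigraph without $k$ disjoint cycles has a cycle of length $O(\log k)$. This is true, but it needs substantially heavier tools, such as dense minors in graphs of large girth (e.g.\ Diestel--Rempel), not the counting you sketch.

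The missing idea in the classical proof is to control the \emph{maximum} degree rather than the minimum degree. Take a maximal subgraph $H\subseteq G$ in which every vertex has degree $2$ or $3$, and let $U$ be its set of degree-$3$ vertices. Apart from $2$-regular components, $H$ is a subdivision of a cubic multigraph on $U$. In a cubic multigraph at most $|C|$ edges leave a shortest cycle $C$. Moving a vertex of degree at most $1$ from the remainder to the deleted side strictly decreases the number of crossing edges. Hence deleting $C$ and re-suppressing loses at most $2|C|=O(\log|U|)$ branch vertices. Induction on $k$ then gives $k$ disjoint cycles once $|U|\ge s_k=\Theta(k\log k)$.

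Maximality of $H$ shows that every cycle of $G$ avoiding $U$ is either a $2$-regular component of $H$ or meets $H$ in a single degree-$2$ vertex $z$. Choose one such cycle for each such $z$. By maximality again, these cycles and the $2$-regular components avoiding all such $z$ are pairwise disjoint. So either there are $k$ of them, or fewer than $k$ further vertices cover all cycles avoiding $U$. Together with $U$, these vertices form a feedback vertex set of size $O(k\log k)$. In short, your global reduction of $G$ to minimum degree $3$ is the step that must be replaced by this bounded-degree substructure.
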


The Erd\H{o}s-P\'osa Theorem has been hugely influential in graph structure theory and has been studied in numerous other settings, including minors~\cite{RobertsonS1986Graph, CamesvanBatenburgHJR2019Tight,PaulPTW2024Obstructions}, topological minors~\cite{Thomassen1988Presence, Liu2022Packing,PaulPTW2024Obstructions}, immersions~\cite{Liu2021Packing, KK18}, directed cycles~\cite{ReedRST1996Packing}, matroid circuits~\cite{GK09}, and vertex minors~\cite{EPvertexminors}.

Moreover, there is an extensive line of work on the Erd\H{o}s-P\'osa property for cycles satisfying various constraints including triangles~\cite{tuza90}, odd cycles~\cite{Reed1999Mangoes,KawarabayashiKKX2025Halfintegral}, even cycles~\cite{BruhnHJ2018Frames,GorskyKKW2024Packing,Gorsky2024Structure}, cycles of length $0 \pmod{m}$~\cite{Thomassen1988Presence},  cycles of length $\ell \pmod{p}$ with $p$ prime~\cite{ThomasY2023Packinga}, long cycles~\cite{BBR07, CJU20, MNSW17}, holes~\cite{KimK2020ErdHosPosa, HK24}, $S$-cycles \cite{KakimuraKM2011Packing}, and $(S_1,S_2)$-cycles \cite{HuynhJW2019Unified}.  Most (but not all) of the aforementioned results on cycles now follow from very general theorems for cycles in graphs with edge-labels from multiple abelian groups~\cite{HuynhJW2019Unified, GollinHKOY2022Unified, GollinHKKO2024Unified}.  For more details, we refer the interested reader to~\cite{RaymondT2017Recent} for a general survey on Erd\H{o}s-P\'osa theorems.  

All of the above results can be stated in terms of packings and covers in hypergraphs, as we now describe.   
Given a hypergraph $H$, a \emph{packing} is a set of disjoint hyperedges, and a \emph{cover} is a set of vertices $X$ such that $e \cap X \neq \emptyset$ for all $e \in E(H)$.  We define $\nu(H)\coloneqq\max\{|M| : \text{$M$ is a packing}\}$ and $\tau(H)\coloneqq\min\{ |X| : \text{$X$ is a cover}\}$.  Clearly, $\tau(H) \geq \nu(H)$ for every hypergraph $H$.  We say that a family $\mathcal{H}$ of hypergraphs satisfies the \emph{Erd\H{o}s-P\'osa property} if there exists a function $f$ such that $\tau(H) \leq f(\nu(H))$ for every $H \in \mathcal{H}$.
For example, for every graph $G$, we can make a hypergraph $H_G$, where $V(H_G)=V(G)$ and $E(H_G)=\{V(C) \mid \text{$C$ is a cycle of $G$}\}$. The original theorem of Erd\H{o}s and P\'osa is simply the assertion that the family of hypergraphs $\mathcal{H}\coloneqq\{H_G \mid \text{$G$ is a graph}\}$ has the Erd\H{o}s-P\'osa property.  Moreover, maximum packings and minimum covers are optimal integral solutions to two dual linear programs, and the Erd\H{o}s-P\'osa function $f$ measures the integrality gap between the two linear programs.  

% \CMin{The below is not necessarily true: you can construct a hypergraph with vertex set $V(G) \cup \mathbb{N}$ and edge set $\{V(C) \cup \{\ell\} : C \text{ is a cycle in } G \text{ of length }\ell\}$. Then a cover gives you a hitting set of vertices and allowed lengths. Perhaps we should say something like ``in a hypergraph over $V(G)$''.}

% \THin{Good point.  I changed it as suggested.}
Interestingly, maximising the number of vertex-disjoint cycles of different lengths in a graph $G$ is not a maximum packing problem in a hypergraph with vertex set $V(G)$.  Thus, our main problem does not fit into the above framework.  Nonetheless, we provide another type of ``dual certificate'' whenever a graph does not contain $k$ vertex-disjoint cycles of different lengths.  Our dual certificate is stated in terms of the \emph{cycle spectrum} $\Lambda(G)$ of a graph $G$, defined as $\Lambda(G)\coloneqq\{ \ell \in \mathbb{N} \mid \text{$G$ contains a cycle of length $\ell$}\}$.  The cycle spectrum of a graph has been extensively studied (see the surveys~\cite{cyclespectrum,BGS22}). For example, Bondy's famed meta-conjecture~\cite{bondy71} asserts that every non-trivial condition implying that a graph $G$ is Hamiltonian actually implies that $\Lambda(G)=\{3, \dots, |V(G)|\}$ (with some simple family of exceptions).  

The following is our first main result.  

\begin{theorem} \label{thm:main}
   For every $k \in \mathbb{N}$, every graph $G$ contains $k$ vertex-disjoint cycles of different lengths, or there exists a set $X \subseteq V(G)$ with $|X| \in \mathcal{O}(k^6\mathsf{polylog}(k))$ such that $|\Lambda(G-X)| \leq k-1$. 

    Furthermore, there is an algorithm that given $k \in \mathbb{N}$ and a graph $G$ as input, either returns $k$ vertex-disjoint cycles of different lengths in $G$, or $X$ as above, in time $\mathcal{O}(18^{k^5}\mathsf{poly}(k)|V(G)|^7)$.
    % $2^{\mathcal{O}(k^2)}|V(G)|^6$.
\end{theorem}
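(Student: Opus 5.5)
We argue by induction on $k$, with a bound $f(k)$ on $|X|$ satisfying a recursion of the form $f(k) \le f(k-1) + g(k)$ for some $g(k) \in \mathcal{O}(k^5\mathsf{polylog}(k))$. The base case $k=1$ is trivial: if $G$ has no cycle, then $X=\emptyset$ works. For the step, assume $G$ has no $k$ vertex-disjoint cycles of different lengths. We split according to whether $G$ has a large packing of cycles, using \zcref{thm:ep}.

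If $G$ has no $p(k)$ vertex-disjoint cycles, for a suitable polynomial $p$, then \zcref{thm:ep} gives a feedback vertex set of size $\mathcal{O}(p(k)\log p(k))$. This alone does not bound the spectrum, so we use a second ingredient. We would take a shortest cycle $C$ and consider the lengths $\ell_1<\ell_2<\dots$ of $\Lambda(G)$. The claim to prove is: if $|\Lambda(G)|$ is large (say at least $k^2$ or so), then we can find a cycle of some length $\ell$ together with a set $Y$ of size $\mathrm{poly}(k)$ hitting all cycles of length $\ell$, such that $G-Y$ still retains $k-1$ dedicated lengths disjoint from a fixed cycle of length $\ell$. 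The natural tool is Erd\H{o}s--P\'osa for cycles of a \emph{fixed} length $\ell$ restricted to long/short cycles. The cleaner route is to work with the $\min$ and $\max$ lengths instead:
\begin{enumerate}
\item Let $L$ be the set of the $k$ smallest lengths in $\Lambda(G)$.
\item Packing short cycles with lengths in $L$: either $k^2$ disjoint cycles of lengths in $L$ exist, or a small hitting set exists. The latter holds because cycles of bounded length (at most $\ell_k$) have a cover bounded in terms of packing times $\ell_k$. This is problematic since $\ell_k$ may be unbounded.
\end{enumerate}

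Because of that last issue, we instead use a greedy, iterative approach. First we try to find disjoint cycles $C_1,\dots,C_k$ of distinct lengths by always choosing a shortest cycle of a new length in the graph minus the previously chosen cycles. When this fails after choosing $C_1,\dots,C_j$, every cycle of $G-\bigcup V(C_i)$ has one of $j$ lengths. The union $\bigcup V(C_i)$ is not small, however. To fix this, we use the following key lemma. If $C$ is a shortest cycle among those of length at least $\ell$, and $P$ is a long subpath of it, then any cycle meeting $C$ can be rerouted through $C$ to create cycles of many lengths, using shortest-path properties. This lets us replace $V(C_i)$ by $\mathrm{poly}(k)$ vertices: a bounded number of ``branch'' vertices on $C_i$ where other cycles attach. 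The argument is that if many disjoint cycles (or paths) attach to $C_i$ at far-apart locations, then rerouting produces cycles of $\Omega(k)$ distinct lengths that are pairwise disjoint. That would contradict our assumption together with induction applied to the rest.

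We expect the main obstacle to be this rerouting lemma: showing that many disjoint attachments to a shortest cycle generate $k$ disjoint cycles of pairwise distinct lengths. Two cycles obtained by rerouting may have equal lengths, so an additive-combinatorics style counting of sums of path lengths (in the spirit of the subdivided-ladder remark in the abstract) is needed to force distinctness. This is where the $k^5$-type losses arise. The algorithm follows because every step is constructive. The $18^{k^5}$ factor comes from brute-force enumeration over bounded attachment configurations, and the polynomial factor $|V(G)|^7$ from repeated shortest-cycle and Erd\H{o}s--P\'osa computations.
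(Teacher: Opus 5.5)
Your proposal is not yet a proof. The first two routes are abandoned midway, and the final greedy route rests on a ``rerouting lemma'' that is neither stated precisely nor proved.

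Worse, the reduction that lemma is meant to support is false as formulated. After the greedy phase stops at $C_1,\dots,C_j$, you want to replace $\bigcup_i V(C_i)$ by $\mathrm{poly}(k)$ branch vertices \emph{on} the $C_i$. Here is a counterexample. Take a cycle $C$ of length $L=Ms$ with $M\ge 3$, a new vertex $z$, and $M$ internally disjoint paths of length $L$ from $z$ to vertices of $C$ spaced $s$ apart.
\begin{itemize}
\item $C$ is the unique shortest cycle, since all other cycles have length at least $2L+s$. So $C_1=C$, and the greedy phase stops because $G-V(C)$ is a tree.
\item For $k=2$, $G$ has no two disjoint cycles at all, and $X=\{z\}$ works.
\item Every $Y\subseteq V(C)$ with $|\Lambda(G-Y)|\le 1$ has $|Y|\ge M/3$. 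For $Y=\emptyset$ both $L$ and $2L+s$ occur. Otherwise, any component of $C-Y$ containing three attachment points yields both $2L+s$ and $2L+2s$.
\end{itemize}
The attachments here all pass through $z$, so your ``many disjoint attachments'' dichotomy does not apply, and the small cover lies off the $C_i$.

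In general, what you need is a small set meeting every cycle whose length lies outside $\{|C_1|,\dots,|C_j|\}$. These cycles merely meet $\bigcup_i V(C_i)$, so this is the original problem again, restricted to cycles through a given set. Where many disjoint attachments do exist, turning them into $k$ \emph{pairwise disjoint} cycles of \emph{distinct} lengths is exactly the hard part. The paper's constructions show there are subdivided ladders with $m$ rungs, $m$ arbitrarily large, having at most $m^{0.79}$ cycle lengths, and others with no $m^{0.45}$ vertex-disjoint cycles of distinct lengths. Finally, the induction on $k$ never enters in a well-defined way, and your accounting of the $k^5$ loss and of the $18^{k^5}$ factor has no basis.

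The missing idea is to split on treewidth rather than on the cycle-packing number. Many disjoint cycles give nothing by themselves, e.g.\ disjoint copies of $K_{k+1}$.
\begin{itemize}
\item If $\mathsf{tw}(G)\le\ell$, send each cycle to the subtree of a tree-decomposition formed by the bags it meets, coloured by its length. A leaf-peeling induction shows that coloured subtrees of a tree either contain $k$ disjoint ones of distinct colours, or there are at most $k-1$ nodes such that the subtrees avoiding them use at most $k-1$ colours. The union of those bags is $X$, with $|X|\le(k-1)(\ell+1)$.
\item If $\mathsf{tw}(G)\in\Omega(k^5\mathsf{polylog}(k))$, Chekuri--Chuzhoy yields $k$ disjoint subgraphs of treewidth $\Omega(k^2)$. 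Each contains a subdivided $6k$-prism by Birmel\'e--Bondy--Reed. Each subdivided $6k$-prism has at least $k$ cycle lengths: compare the lengths of $C_1\triangle\bigcup_{i\in A}C^i$ for nested sets $A$ of the $3k$ quadrilaterals $C^i$. So one cycle per piece can be chosen greedily with distinct lengths.
\end{itemize}
This treewidth threshold is where $k^5$, and hence $|X|=O(k^6\mathsf{polylog}(k))$, comes from. The $18^{k^5}$ factor comes from running the Visser--Bodlaender $O(18^{\ell}\gamma^4|V(G)|)$ cycle-spectrum test while processing the decomposition leaf-first. Incidentally, your feedback-vertex-set branch could be completed this way, since a feedback vertex set $F$ gives $\mathsf{tw}(G)\le|F|+1$.
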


Note that for every $X \subseteq V(G)$, the number of vertex-disjoint cycles of distinct lengths is at most $|\Lambda(G-X)|+|X|$.   In particular, the set $X$ from \zcref{thm:main} certifies that $G$ contains $\mathcal{O}(k^6\mathsf{polylog}(k))$ vertex-disjoint cycles of different lengths.  In \zcref{sec:lowerbounds} we show that the bound $|\Lambda(G-X)| \leq k-1$ in \zcref{thm:main} is best possible even for arbitrarily large $X$.  

We also prove analogous results for facial cycles of embedded graphs.  In fact, we prove a more general theorem where faces are ``coloured'' (not necessarily by their length), and we seek facial cycles with distinct colours that are also pairwise far apart.  
% Our theorem is most conveniently stated in terms of \emph{$\Sigma$-map graphs} (see~\cite{DEW17}), which generalise graphs embedded on $\Sigma$.  
Precise definitions will be given later in~\zcref{sec:faces}.

\begin{theorem}\label{thm:facesatdistance}
      Let $k,d \in \mathbb{N}^+$ and $g \in \mathbb{N}$.
    Let $G$ be a graph with a 2-cell embedding $\psi$ on a surface $\Sigma$ of Euler genus $g$, let $c$ be a colouring of the faces $\mathcal{F}(\psi)$ of $\psi$, and let $R(G,\psi)$ be the radial graph of $(G, \psi)$.

    Then there exist $k$ faces $F_1, \ldots, F_k \in \mathcal{F}(\psi)$ that are given pairwise distinct colours by $c$ and are pairwise at distance at least $d$ in $\psi$, or there exists a set $X \subseteq V(G)$ of order at most $\mathcal{O}(k^2dg)$ such that $|\{ c(F) \mid F \in \mathcal{F}(\psi) \text{ and } V(F) \cap \bigcup_{x \in X} N^d_{R(G,\psi)}(x) = \emptyset \}| \leq k(k+2)$.

    Moreover, there exists an algorithm running in time $2^{\mathcal{O}(d^2g^2)}\mathsf{poly}(k)|V(G)|^4$ that finds either the faces $F_1, \ldots, F_k$ or the set $X$ with the properties promised above.
\end{theorem}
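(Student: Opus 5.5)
We prove \zcref{thm:facesatdistance} by a greedy procedure with a budget: we try to build the $k$ faces one colour at a time, and each time we get stuck we pay a small number of vertices into $X$. Throughout we work in the radial graph $R \coloneqq R(G,\psi)$. Two faces are at distance at least $d$ in $\psi$ exactly when their face-vertices are far apart in $R$, up to a constant factor in $d$ that we absorb.

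For each colour $\gamma$ in use, fix a representative face $F_\gamma$. Call two colours \emph{in conflict} if every face of the first colour is within distance $d$ of every face of the second colour. Colours that are pairwise not in conflict do not immediately give a packing, because the witnessing faces change from pair to pair. So we proceed greedily instead. Choose a maximal sequence of faces $F_1,\dots,F_m$ with distinct colours and pairwise distance at least $d$. If $m\ge k$ we are done. Otherwise $m\le k-1$, and every face whose colour is not among $c(F_1),\dots,c(F_m)$ lies within distance $<d$ of $B \coloneqq \bigcup_i N^{d}_R(V(F_i))$. Thus all remaining colours are confined to a bounded-radius neighbourhood of at most $k-1$ faces.

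The main step is to show that the faces meeting a single ball $N^{2d}_R(F_i)$ either contain about $k+2$ far-apart faces of distinct colours, which we use in an augmentation step, or can be covered by $\mathcal{O}(kdg)$ vertices. The facial boundary of $F_i$ may be long, so a ball around it is not small. The approach is to exploit the surface. Take a BFS layering of $R$ from $F_i$ up to depth $2d$. In the planar, annulus-like case, faces near $F_i$ of many distinct colours that are far from one another are arranged roughly cyclically around $F_i$. Using $\mathcal{O}(k)$ radial paths of length $\mathcal{O}(d)$, we cut the neighbourhood into sectors. If at least $k+2$ colours survive in one sector, we replace $F_i$ by two far-apart faces of new colours, which strictly increases $m$. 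If not, at most $k+2$ colours survive per region, and the $\mathcal{O}(k)$ cutting paths, of total size $\mathcal{O}(kd)$, go into $X$. Genus enters because the layering around $F_i$ need not be an annulus. Non-contractible short cycles can be handled by $\mathcal{O}(g)$ additional cutting paths of length $\mathcal{O}(d)$, via a tree-cotree style argument on the ball. This gives $\mathcal{O}(kdg)$ vertices per face, hence $\mathcal{O}(k^2dg)$ over all $m<k$ faces, and at most $k(k+2)$ surviving colours. That count is where the bound $k(k+2)$ comes from.

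The augmentation argument, with its potential function $m$, runs at most $k$ rounds. Each round is polynomial, apart from a brute-force search over configurations inside a ball of radius $\mathcal{O}(d)$ with genus $g$. That search gives the $2^{\mathcal{O}(d^2g^2)}$ factor. The main obstacle is the local lemma in the previous paragraph. The hard part is making the sector decomposition rigorous on a surface of higher genus, and ensuring that deleting $X$ together with its $d$-neighbourhood really separates the sectors, so that the leftover colours cannot combine across sectors. I would first prove the planar case via radial distance layers, then lift it to genus $g$ by cutting along $\mathcal{O}(g)$ short non-contractible curves.
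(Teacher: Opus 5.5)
There is a genuine gap. Your opening greedy step matches the paper's first step: a maximal family $F_1,\dots,F_m$ with $m\le k-1$, so that every face of a new colour lies near some $F_i$. The problem is the local lemma you then rely on. It is left unproved, and the sector argument you sketch for it would not work.

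\begin{itemize}
\item The criterion ``at least $k+2$ colours survive in a sector, hence two far-apart faces of new colours'' is false. A sector can contain a vertex $v$, far from all your cutting paths, that lies on arbitrarily many faces of distinct colours. These faces are pairwise at distance $1$ in $\psi$. What the local step must decide is ``many \emph{pairwise far-apart} faces of distinct colours, or a small set whose neighbourhood leaves few colours''. That is an Erd\H{o}s--P\'osa statement of the same type as the theorem, and a fixed choice of $\mathcal{O}(k)$ radial cuts gives no mechanism for deciding it.
\item The augmentation step is unjustified. Two faces found near $F_i$ need not be at distance at least $d$ from the other $F_j$, nor avoid their colours, and maximality of the greedy family controls neither.
\item Your count does not yield $k(k+2)$. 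If each of $\mathcal{O}(k)$ sectors may keep up to $k+1$ colours, the total over $k-1$ centres is $\mathcal{O}(k^3)$.
\item Brute force inside a ball of radius $\mathcal{O}(d)$ does not run in time $2^{\mathcal{O}(d^2g^2)}$, because such a ball can contain almost all of $G$.
\end{itemize}

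The missing ingredient is bounded local treewidth combined with the subtree lemma.

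\begin{itemize}
\item Work with the face-vertex $v_{F_i}$ of $R$. It is adjacent to all of $V(F_i)$, so the length of the facial boundary is irrelevant.
\item Every ball $R[N^r_R(v)]$ has radius at most $r$ and embeds in $\Sigma$. By \zcref{lem:bndgenusbndradiushasbndtw} it has treewidth $\ell=\mathcal{O}(d(g+1))$ for the radii $r=\mathcal{O}(d)$ that are needed.
\item In a graph of treewidth $\ell$, and for odd $d$, each ball $N^{\lfloor d/2\rfloor}(v)$ is connected, so it induces a subtree of a tree-decomposition. Disjoint subtrees give centres at pairwise distance at least $d$.
\item Hence \zcref{lem:rainbowtreepacking} gives either $k$ such vertices of distinct colours, or at most $k-1$ bags whose union $X_i$ leaves at most $k-1$ colours outside $N^{\lfloor d/2\rfloor}(X_i)$. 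This is \zcref{lem:bndtwcolouredverticesatdistanceEP}. Your radial cuts are trying to play the role of these bags, but the bags must be chosen adaptively by the leaf-to-root sweep.
\end{itemize}

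Augmentation is avoided entirely. Recolour $N^{2d}(v_i)\setminus N^{d}(v_i)$ with a single dummy colour and ask for $k+1$ vertices in $G[N^{2d}(v_i)]$.

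\begin{itemize}
\item A local success then contains $k$ genuinely coloured vertices of $N^{d}(v_i)$. They are also at distance at least $d$ in the whole graph, because any shorter path between them stays inside $N^{2d}(v_i)$. So a local success is already a global packing.
\item Otherwise each of the at most $k-1$ centres contributes $k(\ell+1)$ vertices and at most $k$ colours. This gives at most $(k-1)+(k-1)k=k^2-1$ colours in total (\zcref{thm:colouredverticesatdistanceEP}).
\end{itemize}

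Now run this on $R$. Colour face-vertices by $c$, give all original vertices one extra colour, and use parameters $k+1$ and $2d$. After replacing each face-vertex in $X$ by a vertex on its face, this gives the bound $(k+1)^2-1=k(k+2)$. The running time comes from computing tree-decompositions of width $\mathcal{O}(d(g+1))$ via \zcref{thm:computetwexact}, not from brute force.
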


Theorems~\ref{thm:main} and~\ref{thm:facesatdistance} are rare instances of Erd\H{o}s-P\'osa theorems which cannot be naturally phrased as a maximum packing versus minimum cover problem in a hypergraph.  Other notable examples include the recent Erd\H{o}s-P\'osa theorems in the emerging field of ``coarse graph theory''~\cite{AHJKW24, GP25, AGHK25, DJMM24}.  

We achieve our bounds for the size of the hitting set in~\zcref{thm:main} by analysing the cycle spectrum of ladder-like graphs, whose presence we can guarantee given high enough treewidth.
Using a construction from additive combinatorics, we also prove that there are subdivided ladders whose cycle spectrum is small.  
In the following statement, the \emph{$k$-ladder} is defined as the Cartesian product of $K_2$ and a~$k$-vertex path.

\begin{theorem}
    \label{lem:ladder-subdivision-with-few-cycles}
    For every $n \in \mathbb{N}$ and $\alpha > \frac{\log 2}{\log (1 + \sqrt{2})} \approx 0.7865$, there exists $k \geq n$ and a~subdivision $G$ of the $k$-ladder such that $|\Lambda(G)| \leq k^\alpha$.
\end{theorem}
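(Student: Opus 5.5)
The idea is to reduce the statement to a question about sums and differences of a set of integers, and then to use a set whose sumset is much smaller than its difference set. Let $H$ be a subdivision of the $k$-ladder with rungs $R_1,\dots,R_k$ in order, where $R_i$ has length $r_i \ge 1$. For $1 \le t < k$, let the two rail segments between rungs $t$ and $t+1$ have lengths $a_t,b_t \ge 1$, and put $s_t = a_t+b_t$ and $P_i = \sum_{t<i} s_t$.

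\textbf{Step 1: the cycle spectrum in closed form.} Every cycle of a ladder is determined by a pair of rungs $i<j$: it uses $R_i$, $R_j$ and both rail paths between them. So every cycle length has the form $r_i + r_j + P_j - P_i$. Write $r_i = L + w_i$, where $L$ is a large constant that keeps all rung lengths positive. Then
\[ r_i + r_j + P_j - P_i = 2L + (w_i - P_i) + (w_j + P_j). \]
Put $p_i = w_i - P_i$ and $q_i = w_i + P_i$. Then $\Lambda(H) \subseteq 2L + (A+A)$ for every set $A$ that contains all $p_i$ and $q_i$.

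\textbf{Step 2: reverse the construction.} Let $A \subseteq 4\mathbb{Z}$ be finite, and let $0<\delta_1<\dots<\delta_k$ be the positive elements of $A-A$, each realised as $\delta_i = q_i - p_i$ with $p_i,q_i \in A$. Set $P_i = \delta_i/2$ and $w_i = (p_i+q_i)/2$; both are integers. Since all elements of $A$ are divisible by $4$, consecutive $\delta_i$ differ by at least $4$. Hence the increments $s_t = P_{t+1}-P_t \ge 2$ can be split as $a_t+b_t$ with $a_t,b_t \ge 1$. Shifting all the $P_i$ by a common constant so that $P_1 = 0$ changes the formula in Step 1 only through $L$. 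Choosing $L$ large then gives a subdivided $k$-ladder with
\[ |\Lambda(H)| \le |A+A|, \qquad k = \tfrac{1}{2}\bigl(|A-A|-1\bigr). \]
Dilating $A$ by $4$ changes neither $|A+A|$ nor $|A-A|$. So it suffices to find finite sets $A \subseteq \mathbb{Z}$ with $|A-A| \to \infty$ and
\[ |A+A| \le \bigl(\tfrac{1}{2}(|A-A|-1)\bigr)^{\alpha}. \]

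\textbf{Step 3: the additive input, which is the main obstacle.} We need sets whose sumset is polynomially smaller than their difference set, with exponent tending to $\log 2/\log(1+\sqrt 2)$; this is the Hennecart--Robert--Yudin type construction. First work in $\mathbb{Z}^d$ with lattice points of a suitable non-symmetric convex polytope, for instance a simplex-type set such as $\{x \in \mathbb{Z}_{\ge 0}^d : \sum x_i \le m\}$, with $d$ and $m$ growing together.
\begin{itemize}
\item The set $A+A$ is again a dilate of a simplex, so its size is a binomial coefficient.
\item The set $A-A$ consists of the points whose positive part and negative part are each bounded. Its size is a Delannoy-type sum, whose exponential growth rate involves $1+\sqrt 2$.
\end{itemize}
Optimising the ratio of the exponential growth rates over $m/d$ should give $\log|A+A| / \log|A-A| \to \log 2/\log(1+\sqrt 2)$. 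I would verify this by a saddle-point or Stirling estimate of both counts; this asymptotic computation is where I expect the real work to lie.

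Finally, map $\mathbb{Z}^d \to \mathbb{Z}$ by $x \mapsto \sum_i x_i N^{i-1}$ with $N$ large. This is a Freiman isomorphism of all orders needed, so it preserves $|A+A|$ and $|A-A|$. Given $\alpha$ above the threshold, the strict inequality leaves slack that absorbs the constant factors and the $-1$ and $\frac12$ in Step 2, once $|A-A|$ is large enough that $k \ge n$.
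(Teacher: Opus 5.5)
Your proposal is correct and is essentially the paper's proof. The paper likewise places the rungs at positions given by the differences $a-b$ and lets the rung lengths encode the sums $a+b$, via its constellation $\{(a-b,a+b)\}$ with one point per $x$-coordinate, so that all cycle lengths lie in an affine image of $A+A$; it then invokes the Hennecart--Robert--Yudin theorem as a black box. Your restriction to the positive half of $A-A$ and your dilation into $4\mathbb{Z}$, in place of doubling all weights, are cosmetic: they cost only a constant factor in $k$, which the strict inequality $\alpha>C$ absorbs, as you say.

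Your Step~3 merely re-derives the cited theorem, and the computation you defer does check out at $m=d/2$. There $|A+A|=\binom{2d}{d}\le 4^d$. The count of points of $A-A$ with exactly $i$ positive and $i$ negative coordinates is $\frac{d!}{i!\,i!\,(d-2i)!}\binom{d/2}{i}^2$, and at $i\approx(1-1/\sqrt2)d$ this equals $(1+\sqrt2)^{2d-o(d)}$. This gives exactly the exponent $\log 2/\log(1+\sqrt2)$; alternatively, you can simply cite the theorem as the paper does.
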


\zcref{lem:ladder-subdivision-with-few-cycles} suggests that different techniques are required to improve the bound in~\zcref{thm:main}.

\textbf{Paper Outline.} We begin by proving the non-algorithmic version of~\zcref{thm:main} in~\zcref{sec:distinct}, with the necessary arguments for the algorithmic part being deferred to \zcref{sec:algoproof}. In~\zcref{sec:lowerbounds}, we provide constructions giving lower bounds for \zcref{thm:main}. We then prove~\zcref{thm:facesatdistance} in~\zcref{sec:faces} and \zcref{lem:ladder-subdivision-with-few-cycles} in \zcref{sec:bounds}.
Finally, in \zcref{sec:extensions}, we discuss possible extensions of our main results, focusing primarily on counterexamples across various settings.

\section{Disjoint cycles with distinct lengths}\label{sec:distinct}
The proofs in this section rely heavily on the graph minor structure theory of Robertson and Seymour, in particular on treewidth and arguments on tree-decompositions.

Let us first define what we mean by a tree-decomposition.
A \emph{tree-decomposition} of a graph $G$ is a pair $\mathcal{T} = (T, \beta)$, where $T$ is a tree and $\beta \colon V(T) \rightarrow 2^{V(G)}$ such that
\begin{itemize}
    \item for every edge $e \in E(G)$, there exists a $t \in V(T)$ such that $e \subseteq \beta(t)$, and
    \item for every vertex $v \in V(G)$, the set $\beta^{-1}(v)$ induces a connected, non-empty subtree of $T$.
\end{itemize}
The \emph{width} of $\mathcal{T}$ is the maximum value of $|\beta(t)| - 1$ over all $t \in V(T)$ and the \emph{treewidth} $\mathsf{tw}(G)$ of $G$ is the minimum width over all tree-decompositions of $G$.

If $T$ has a designated root $r$, we call $(T,r,\beta)$ a \emph{rooted} tree-decomposition.
Given a tree $T$ with a root $r$ and a vertex $t \in V(T)$, we denote by $T_t$ the subtree of $T$ that contains all vertices whose path to $r$ in $T$ contains $t$.
Further, if $(T,r,\beta)$ is a rooted tree-decomposition of a graph $G$, we denote by $G_t$ the subgraph $G[\bigcup_{s \in V(T_t)} \beta(s)]$.

We first show that graphs with bounded treewidth have our desired packing-covering property for cycles of distinct lengths.
This is derived from the following lemma.

\begin{lemma}\label{lem:rainbowtreepacking}
    Let $k \in \mathbb{N}^+$. Let $T$ be a tree and $T_1, \dots, T_m$ be non-empty subtrees of $T$. Let $c$ be a colouring of the subtrees $T_1, \dots, T_m$. Then there exists $k$ vertex-disjoint subtrees each given distinct colours by $c$, or there exists a set $X \subseteq V(T)$ with $|X| \leq k-1$ such that there are at most $k-1$ colours among the subtrees disjoint from $X$.
\end{lemma}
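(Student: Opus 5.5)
Induction on $k$, using the standard greedy argument for subtrees: root $T$ at a vertex $r$, and process the subtrees by choosing a subtree whose highest vertex is deepest. The case $k=1$ is trivial. If there are no subtrees, take $X=\emptyset$ and we have $0 \le k-1$ colours. So assume $k\ge 2$ and the statement holds for $k-1$.

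For each colour class that appears, let $\mathrm{top}(T_i)$ be the vertex of $T_i$ closest to $r$. Choose a vertex $t$ of maximum depth with the following property: the subtrees contained in $T_t$ (the subtree of the rooted tree below $t$) use at least $k$ distinct colours. If no such $t$ exists, then the whole family uses at most $k-1$ colours, and $X=\emptyset$ works. By maximality, for every child $t'$ of $t$, the subtrees inside $T_{t'}$ use at most $k-1$ colours.

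Now let $\mathcal{S}$ be the family of subtrees $T_i$ that avoid $t$. Every subtree avoiding $t$ either lies entirely inside some $T_{t'}$ with $t'$ a child of $t$, or lies entirely outside $T_t$. Apply induction with parameter $k-1$ to the subtrees of $\mathcal{S}$ lying outside $T_t$, with colour set reduced as follows. The natural choice: fix a colour $\gamma$ realised by some subtree $S_0\subseteq T_t$ containing $t$ (one exists if we also pick $t$ so that the colours of subtrees inside $T_t$ strictly exceed those inside each child subtree; I would refine the choice of $t$ so some subtree containing $t$ contributes a colour). Then remove colour $\gamma$ and apply induction for $k-1$ to all subtrees avoiding $S_0$'s vertex $t$, i.e.\ those disjoint from $T_t$ plus those inside the child subtrees. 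If we obtain $k-1$ disjoint subtrees of distinct colours other than $\gamma$, all avoiding $t$, we still need them disjoint from $S_0$. To guarantee this, I would take $S_0$ to be contained in $T_t$ and restrict the inductive application to subtrees disjoint from $T_t$ only, and handle the child parts separately using the bound of at most $k-1$ colours.

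The main obstacle is exactly this interaction: subtrees inside the child branches can meet $S_0$. The fix I would try is to choose $t$ as the deepest vertex such that the subtrees contained in $T_t$ contain $k$ vertex-disjoint subtrees of distinct colours is false but nonempty in a controlled way. Concretely, I would strengthen the induction hypothesis to: either $k$ disjoint rainbow subtrees, or a set $X$ of size $\le k-1$ hitting all subtrees except those of at most $k-1$ colours. At the deepest $t$ where $T_t$ contains a rainbow $k$-packing fails but $T_t$ together with $t$ is "saturated", add $t$ to $X$. Then the subtrees inside $T_t$ not containing $t$ lie in child branches, and I argue, using the deepest choice, that they contribute colours already forced. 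Then recurse on $T - T_t$ with $k-1$, giving $|X|\le 1+(k-2)=k-1$. Verifying the colour count after deleting $t$ is where I expect the real work to lie.
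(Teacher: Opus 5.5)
\textbf{There is a genuine gap.} The inductive step is never completed, and the rule you use to pick $t$ (the deepest vertex whose branch $T_t$ contains subtrees of at least $k$ colours) cannot be made to work.

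\emph{Covering side.} Putting $t$ into $X$ hits only the subtrees through $t$. The subtrees lying inside the child branches $T_{t'}$ are not hit. Each single branch carries at most $k-1$ colours, but different branches can carry different colours, so nothing bounds their total.

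\emph{Packing side.} A subtree inside a child branch can meet your chosen $S_0$, so it cannot be combined with $S_0$.

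\emph{Concrete failure, $k=2$.} Let $T$ be the path $a\,t\,b$ rooted at $t$, with subtrees $\{a\}$, $\{b\}$, $\{a,t,b\}$ of colours $1,2,3$. Your rule selects $t$, since $T_a$ and $T_b$ each carry one colour. You then recurse on $T - V(T_t)$, which is empty, and output $X=\{t\}$. But $\{a\}$ and $\{b\}$ avoid $X$ and carry two colours, so the certificate is wrong; the correct outcome here is the packing $\{a\},\{b\}$. Your closing claim that the child-branch subtrees ``contribute colours already forced'' is exactly the step that is false for this choice of $t$. The ``strengthened'' hypothesis you state is just the lemma itself, so it does not help.

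\textbf{The missing idea: choose $t$ by position, not by colour count.} Let $S_0$ be a subtree whose top vertex $t$ (the vertex closest to the root) has maximum depth. Then every subtree meeting $V(T_t)$ contains $t$. Indeed, a subtree avoiding $t$ but meeting $T_t$ is connected, so it lies inside a child branch and has a deeper top.

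Now delete all subtrees containing $t$ and all subtrees of colour $c(S_0)$, and apply induction with $k-1$ to what remains.
\begin{itemize}
\item If you get a rainbow packing of size $k-1$, its members avoid $t$, hence are disjoint from $S_0\subseteq T_t$. They also avoid colour $c(S_0)$, so adding $S_0$ gives $k$ disjoint subtrees of distinct colours.
\item Otherwise you get $X'$ with $|X'|\le k-2$ leaving at most $k-2$ colours. Then $X=X'\cup\{t\}$ leaves at most those colours plus $c(S_0)$, i.e.\ at most $k-1$.
\end{itemize}
The base case $k=1$ is trivial.

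This is the paper's argument in rooted form. The paper inducts on $|V(T)|$ by removing a leaf $v$. If some subtree equals $\{v\}$, it is selected and its colour class and all subtrees through $v$ are discarded. Otherwise $v$ is deleted from every subtree, which stay disjoint exactly when they were, since all contain $v$'s neighbour whenever they contain $v$.
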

\begin{proof}
    We proceed by induction on $|V(T)|$. Clearly, the result holds for $|V(T)| = 1$. Suppose $|V(T)| \geq 2$ and let $v \in V(T)$ be a leaf. 
    If there exists an ${i \in [m]}$ such that $V(T_i) = \{v\}$, then we add $T_i$ to our collection. 
    We then update our collection by removing all subtrees which have colour $c(T_i)$ or contain the vertex $v$, and we apply induction to $T - v$. If we find $k-1$ disjoint subtrees of distinct colours in this updated collection, then together with $T_i$, this is the desired collection. If we find a set $X' \subseteq V(T - v)$ of at most $k-2$ vertices hitting all subtrees with colours outside a set of at most $k-2$ colours, then $X = X' \cup \{v\}$ is the desired hitting set.

    If none of the subtrees~$T_1, \dots, T_m$ contains only the vertex $v$, then we delete $v$ from every subtree and apply induction to $T - v$.  
    A hitting set $X$ in $T-v$ is also a hitting set in $T$. 
    On the other hand, disjoint subtrees in $T-v$ lift to disjoint subtrees in $T$. 
    This completes the proof.
\end{proof}

Applying the above lemma to an optimal tree-decomposition of $G$ yields the following corollary.

\begin{lemma}\label{lem:lowtwdistinctlengthep}
    Let $k \in \mathbb{N}^+$ and let $G$ be a graph of treewidth at most $\ell$.
    Then $G$ contains $k$ vertex-disjoint cycles of distinct lengths, or there exists a set $X \subseteq V(G)$ with $|X| \leq (k-1)(\ell+1)$ such that $|\Lambda(G - X)| \leq k-1$.
\end{lemma}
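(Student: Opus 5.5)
Take a tree-decomposition $(T,\beta)$ of $G$ of width at most $\ell$, so every bag has at most $\ell+1$ vertices. For each cycle $C$ of $G$, let $T_C$ be the set of nodes $t \in V(T)$ with $\beta(t) \cap V(C) \neq \emptyset$. Colour $T_C$ by the length of $C$, that is, $c(T_C) \coloneqq |V(C)|$. Since $G$ has finitely many cycles, this gives a finite family of coloured subsets of $T$. The plan is to apply \zcref{lem:rainbowtreepacking} to this family and translate both outcomes back to $G$.

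\textbf{Step 1: each $T_C$ is a non-empty subtree of $T$.} Non-emptiness is clear. For connectivity, note that $T_C = \bigcup_{v \in V(C)} \beta^{-1}(v)$, and each $\beta^{-1}(v)$ induces a connected subtree by the definition of a tree-decomposition. For an edge $uv$ of $C$, some bag contains both $u$ and $v$, so $\beta^{-1}(u)$ and $\beta^{-1}(v)$ intersect. Walking around $C$, the union of these subtrees is therefore connected. This is the only point where we need a little care, and it is routine.

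\textbf{Step 2: translating the packing outcome.} Suppose \zcref{lem:rainbowtreepacking} returns $k$ vertex-disjoint subtrees $T_{C_1}, \dots, T_{C_k}$ with pairwise distinct colours. Then the cycles $C_1,\dots,C_k$ have pairwise distinct lengths. They are also vertex-disjoint: if $v \in V(C_i) \cap V(C_j)$, then any node of $\beta^{-1}(v)$, which is non-empty, would lie in both $T_{C_i}$ and $T_{C_j}$, a contradiction.

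\textbf{Step 3: translating the cover outcome.} Otherwise \zcref{lem:rainbowtreepacking} returns a set $Y \subseteq V(T)$ with $|Y| \leq k-1$ such that the subtrees disjoint from $Y$ carry at most $k-1$ colours. Set $X \coloneqq \bigcup_{t \in Y} \beta(t)$, so that $|X| \leq (k-1)(\ell+1)$. Let $C$ be any cycle of $G - X$. Then no vertex of $C$ lies in any bag $\beta(t)$ with $t \in Y$, so $T_C \cap Y = \emptyset$. Hence $|V(C)|$ is one of the at most $k-1$ colours appearing on subtrees disjoint from $Y$, which gives $|\Lambda(G-X)| \leq k-1$.

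None of these steps is a real obstacle. The only substantive check is Step 1, since \zcref{lem:rainbowtreepacking} needs genuine subtrees, and the cycle structure together with the edge condition of tree-decompositions gives this directly.
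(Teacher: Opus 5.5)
Your proof is correct and matches the paper's argument: the paper also assigns to each cycle $C_i$ the subtree on $\bigcup_{v \in V(C_i)} \beta^{-1}(v)$, colours it by the length of $C_i$, applies \zcref{lem:rainbowtreepacking}, and takes $X$ to be the union of the bags at the returned nodes. Your Steps 1--3 spell out the checks the paper leaves implicit, namely connectivity of $T_C$ and the translation of disjointness and of the cover. One small point: treat the $T_C$ as an indexed family, as the paper does with $T_1, \dots, T_m$, since cycles of different lengths can have the same node set.
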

% \CMin{is it worth writing the much more general statement that if $G$ has treewidth at most $\ell$ and $H_1, \dots, H_m$ is any set of coloured connected subgraphs, then either we can find $k$ disjoint subgraphs of pairwise different colours or a set $X$ with $|X| \leq (k-1)(\ell +1)$ such that there are at most $k-1$ colours among the unhit subgraphs? \zcref{lem:lowtwdistinctlengthep} is applying this for $H_i$ the set of cycles coloured by their lengths, and \zcref{lem:bndtwcolouredfacedisjointep} is (the algorithmic version of) applying this for $H_i$ the balls around faces.}
% \MGin{I would like to avoid adding more complexity to this paper. The argument we present here is also of a fairly standard flavour.}
\begin{proof}
    Let $(T, \beta)$ be a tree-decomposition of $G$ of width at most $\ell$. Let $C_1, \dots, C_m$ be a list of all the cycles in $G$. For each cycle $C_i$, let $T_i$ be the subtree of $T$ with vertex set $\bigcup_{v \in V(C_i)} \beta^{-1}(v)$. Let $c(T_i)$ be the length of the cycle $C_i$. By~\zcref{lem:rainbowtreepacking}, there exist $k$ vertex-disjoint subtrees each given a distinct colour by $c$, or there exists $X' \subseteq V(T)$ of size at most $k-1$ such that there are at most $k-1$ colours among the subtrees disjoint from $X'$. If the former holds, then $G$ contains $k$ vertex-disjoint cycles of distinct lengths. If the latter holds, then $X\coloneqq\bigcup_{t \in X'} \beta(t)$ is the desired hitting set in $G$.
\end{proof}

Our next goal is to prove that in graphs of high treewidth, we can find a subgraph certifying the existence of many vertex-disjoint cycles of distinct lengths. One approach is to use the grid theorem of \cite{RobertsonS1986Grapha} (see \cite{ChuzhoyT2021Tighter} for the currently best-known bounds), which shows that a graph with high treewidth must contain a subdivision of a large \emph{wall}.  It is not too difficult to show that every subdivision of a sufficiently large wall contains many vertex-disjoint cycles of distinct lengths.  

However, to obtain better bounds, we bypass using the grid theorem and aim for a different subgraph certifying the existence of many vertex-disjoint cycles of distinct lengths.
Let $k \in \mathbb{N}^+$. The \emph{$k$-prism} is the Cartesian product of $K_2$ and the cycle of length $k$.
Birmel\'{e}, Bondy, and Reed~\cite{prisms} showed that every graph of treewidth $\Omega(k^2)$ contains a subdivision of the $k$-prism.

% It is not hard to show that subdivisions\footnote{Given two graphs $G$ and $H$, we say that $H$ can be found as a \emph{subdivision} in $G$ if there exists an $H' \subseteq G$ such that $H'$ can be constructed from $H$ by replacing the edges of $H$ by non-trivial paths which pairwise do not share any vertex except for possibly their endpoints, which are retained from the edges they replace.} of \emph{walls} have many disjoint cycles of distinct lengths, which are graphs known to have high treewidth and for being able to be found in any graph of high enough treewidth \cite{RobertsonS1986Grapha} (see \cite{ChuzhoyT2021Tighter} for the currently best known bounds). In fact, even subdivisions of \emph{ladders} have many disjoint cycles of distinct lengths, for which there exist better bounds on the required treewidth \cite{HuynhJMSW2022Excluding}. However, to obtain the best bounds, we use the following results.

% We first define the type of subgraph we are interested in finding.

\begin{theorem}[Birmel\'{e}, Bondy, and Reed \cite{prisms}]\label{thm:prisms}
    Every graph of treewidth at least $60k^2-120k+62$ contains a subdivision of the $k$-prism.
\end{theorem}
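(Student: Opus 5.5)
The statement is the theorem of Birmelé, Bondy and Reed, which the paper cites from \cite{prisms}. To prove it directly, I would go through brambles and a linkage argument.

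\textbf{Step 1: a bramble of large order.} By treewidth duality, a graph of treewidth at least $t-1$ has a bramble $\mathcal{B}$ of order at least $t$. Here $t = 60k^2-120k+63$, which is of order $\Theta(k^2)$. Every hitting set of $\mathcal{B}$ then has at least $t$ vertices.

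\textbf{Step 2: a path meeting every bramble element.} Among all paths in $G$, take a minimal one, $P$, that meets every element of $\mathcal{B}$. A standard argument shows that such a path exists. Since $V(P)$ is a hitting set, $|V(P)| \geq t$.

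\textbf{Step 3: cut $P$ into segments and pick two.} I would cut $P$ into about $ck^2$ consecutive segments $P_1, \dots, P_m$. Each $P_i$ should be chosen minimally so that the subbramble of elements touching only $P_i$ (together with later segments) still has order about $k$. Using Menger's theorem and the order of the bramble, I then expect $k$ vertex-disjoint paths between any two far-apart segments, or at least between many pairs. The idea is to fix two segments $A = P_i$ and $B = P_j$. Both $A$ and $B$ are subpaths, and they will eventually carry the two $k$-cycles of the prism, once each is closed into a cycle through the rest of $P$.

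\textbf{Step 4: build the prism.} The prism needs two cycles $C_1$ and $C_2$ together with $k$ disjoint rungs, and the rungs must attach to $C_1$ and $C_2$ in the same cyclic order. The natural construction takes $k$ disjoint $A$–$B$ paths, obtained from Menger's theorem inside the bramble. It then closes $A$ and $B$ into cycles using disjoint connecting paths, routed through parts of $P$ reserved for this purpose.

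The main obstacle is making the cyclic orders on $C_1$ and $C_2$ agree. The $k$ linkage paths can induce an arbitrary permutation between the order in which they leave $A$ and the order in which they arrive at $B$.

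\begin{itemize}
\item \textbf{Erdős–Szekeres.} My first attempt is an Erdős–Szekeres argument. With $k^2$ paths, some $k$ of them have monotone (increasing or decreasing) order along $B$, and either case gives the prism. This is exactly the source of the quadratic treewidth bound.
\item \textbf{Fallback if the linkage interferes.} The linkage may pass through the reserved closing segments. In that case I would reroute by choosing the linkage to minimise its intersection with $P$. Failing that, I would use the segment structure to find a different pair of segments whose connecting linkage avoids a middle region. That middle region would then serve as the closing path.
\end{itemize}

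The precise constant $60k^2-120k+62$ comes from balancing the number of segments against these Erdős–Szekeres and rerouting losses. I would not try to reproduce it exactly; for the paper's purposes it suffices to cite \cite{prisms}.
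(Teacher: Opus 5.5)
The paper gives no proof of this theorem. It is quoted from Birmel\'e, Bondy and Reed \cite{prisms}, so your closing decision to cite it is exactly what the paper does. Read as a self-contained argument, however, your sketch has a genuine gap, and it is not where you place it.

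Making the cyclic orders agree is the routine part: cut both cycles open and apply Erd\H{o}s--Szekeres to $(k-1)^2+1$ paths. What your route actually needs is two disjoint cycles $C_1 \supseteq A$ and $C_2 \supseteq B$, together with $(k-1)^2+1$ disjoint $C_1$--$C_2$ paths that are internally disjoint from $C_1 \cup C_2$. Step~4 gives no way to obtain this.

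\begin{itemize}
\item Inside the path $P$, the two ends of $A$ are joined only by $A$ itself. So closing $A$ (and likewise $B$) into a cycle ``through the rest of $P$'' needs a connection that leaves $P$.
\item The two closing connections must avoid each other, the opposite segment, and every rung.
\item Menger's theorem combined with the bramble order gives one linkage between one pair of vertex sets at a time. It says nothing about linkages for several different terminal pairs (the rungs and the two closings) being simultaneously disjoint. That is a multi-commodity requirement which cut conditions alone do not guarantee.
\item Your fallback is a hope rather than an argument. Minimising the intersection of the linkage with $P$ need not make that intersection empty. Nothing you have established forces a pair of segments whose linkage avoids a middle region to exist.
\end{itemize}

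Step~3 also does not add up. If each segment's sub-bramble has order only about $k$, Menger gives only about $k$ disjoint paths. Erd\H{o}s--Szekeres then leaves about $\sqrt{k}$ rungs, not $k$. Each of the two pieces you link must meet a sub-bramble of order at least $(k-1)^2+1$.

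The interference problem disappears if you reverse your order of operations and build the cycles before linking them. Suppose $C_1, C_2$ are disjoint cycles such that each $\mathcal{B}_i = \{B \in \mathcal{B} : B \cap V(C_i) \neq \emptyset\}$ has order at least $(k-1)^2+1$.
\begin{itemize}
\item Any set $X$ of at most $(k-1)^2$ vertices misses some $B_1 \in \mathcal{B}_1$ and some $B_2 \in \mathcal{B}_2$.
\item Since $B_1$ and $B_2$ touch, $G[B_1 \cup B_2]$ contains a $V(C_1)$--$V(C_2)$ path avoiding $X$.
\item Menger then gives $(k-1)^2+1$ disjoint $V(C_1)$--$V(C_2)$ paths. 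By definition these meet $C_1 \cup C_2$ only in their ends, and Erd\H{o}s--Szekeres finishes.
\end{itemize}
So it would suffice to construct two such disjoint cycles from a path meeting all of $\mathcal{B}$. Your sketch contains no argument for that construction, nor any other mechanism that keeps the closing paths off the rungs.
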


Next, we show that every subdivision of a large prism contains many (not necessarily disjoint) cycles of distinct lengths.

\begin{lemma}\label{lem:prismsubdivdifferentlengths}
    Let $k \in \mathbb{N}^+$ and $G$ be a subdivision of the $6k$-prism. Then $|\Lambda(G)| \geq k$.
\end{lemma}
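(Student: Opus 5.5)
Write the $6k$-prism as an outer cycle $u_1u_2\dots u_n$ and an inner cycle $v_1v_2\dots v_n$ with $n=6k$, joined by the rungs $u_iv_i$. In the subdivision $G$, let $r_i\ge 1$ be the length of the path replacing the rung $u_iv_i$. For $i<j$, let $D(i,j)$ be the total length of the outer arc from $u_i$ to $u_j$ plus the inner arc from $v_i$ to $v_j$, both taken in the increasing direction. Then $G$ contains the cycle $C(i,j)$ made of the two rungs and these two arcs, of length
\[ L(i,j) = r_i + r_j + D(i,j). \]
Every edge of the prism is subdivided into a path of length at least $1$, so $D(i,j)-D(i,j') \ge 2(j-j')$ for $j'<j$. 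I would look only at cycles of the form $C(i,j)$ and aim to find $k$ of them whose lengths form a strictly increasing chain.

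The basic comparison is as follows. If $[i',j']\supseteq[i,j]$, then
\[ L(i',j')-L(i,j) = (r_{i'}-r_i)+(r_{j'}-r_j)+\bigl(D(i',j')-D(i,j)\bigr). \]
The last bracket is at least $2\bigl((i-i')+(j'-j)\bigr)$. So enlarging the interval strictly increases the length whenever each replaced rung is replaced by one that is not much shorter. In particular, if $i'=i$ and $r_{j'}\ge r_j$, then $L(i,j')>L(i,j)$.

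The plan is to exploit the rung lengths by an extremal choice. After a cyclic relabelling, take index $1$ to be a rung of maximum length $r_1=\max_t r_t$, and restrict to a window of consecutive indices (this is where the slack in $6k$ is spent). For $j$ in the window, compare $C(1,j)$ with the cycles $C(j,n')$ that use a second long rung at the far end of the window. The key dichotomy: walking through the window, look at the indices that are left-to-right or right-to-left maxima of $(r_t)$ within the window. Along the left-to-right maxima, the cycles $C(1,j)$ strictly increase in length, by the comparison above. Along the right-to-left maxima, the cycles $C(j,\text{far end})$ strictly decrease as $j$ moves right. If one of these record sequences has length at least $k$, we are done.

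Otherwise I would recurse, as in a Cartesian tree. Take the maximum rung inside a subwindow, split there, and build nested intervals whose endpoints are always the maximum rungs of the current subwindow. Each enlargement then replaces an endpoint by a rung at least as long, so the lengths along the nested chain strictly increase. The counting goal is to show that, in a sequence of $6k$ rung values, either a monotone record chain or a nested chain of this kind has length at least $k$, with the factor $6$ absorbing the cyclic relabelling and the need to split the window into a left and a right half.

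I expect this last counting step to be the main obstacle. Naive monotone subsequences (Erdős–Szekeres) only give about $\sqrt{n}$ distinct lengths, so the argument must really use that the arc lengths grow by at least $2$ per step. The point to exploit is that a short rung between two long ones is harmless for nested intervals whose endpoints are the long ones. If the nested-interval chain does not give $k$ values, I would fall back on also using the complementary cycles. These use rungs $i,j$ and the arcs going the other way around, with length $r_i+r_j+S-D(i,j)$, where $S$ is the total length of both cycles. Their lengths move in the opposite direction and can be paired with the $C(i,j)$ to cover the remaining case.
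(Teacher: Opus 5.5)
\paragraph{The gap.} The counting step you flag as the main obstacle cannot be completed. It is false for the cycles you allow, so the problem is the choice of cycle family, not the counting. Inside a window of consecutive indices, your cycles $C(i,j)$ are exactly the cycles of a subdivided ladder, and the construction behind Theorem~\ref{lem:ladder-subdivision-with-few-cycles} extends to prisms:
\begin{itemize}
\item Take a Hennecart--Robert--Yudin set $A$ and points $p_i=(x_i,y_i)=(a_i-b_i,\,a_i+b_i)$ with $a_i,b_i\in A$ and $x_1<\dots<x_n$.
\item Give both rim arcs between consecutive indices length $x_{i+1}-x_i$, set $r_i=2(y_i-s)$, and close both rims with arbitrary arcs.
\item Then $L(i,j)=4(b_i+a_j)-4s$, while your complementary cycles have length $r_i+r_j+S-D(i,j)=S-4s+4(a_i+b_j)$.
\end{itemize}
A cyclic relabelling only changes which pairs are of which type. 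So your whole family, plus the two rims, realises at most $2|A+A|+2\le 2|A-A|^{\alpha}+2$ lengths, with $\alpha<0.79$. Keep $6k$ of the $|A-A|$ points, where $k=\lfloor |A-A|/6\rfloor$. This gives a subdivided $6k$-prism in which your family has fewer than $k$ lengths once $|A|$ is large. Your inequality $D(i,j)-D(i,j')\ge 2(j-j')$ holds in this example, so it cannot rescue the argument. Consistently, your record chains and Cartesian-tree chains can be of only logarithmic length, for instance when the Cartesian tree of $(r_t)$ is balanced.

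\paragraph{The missing idea.} You need cycles that treat the two rims asymmetrically and may use many rungs. The paper pairs rungs $2i-1,2i$ into $3k$ vertex-disjoint $4$-cycles $C^i$. Writing $w(\cdot)$ for length and $C_1,C_2$ for the rims, it sets $\delta_i=w(C_1\triangle C^i)-w(C_1)$: the detour through the inner rim minus the outer edge it replaces.

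Since the $C^i$ are disjoint, $C_1\triangle\bigcup_{i\in J}C^i$ is a cycle of length $w(C_1)+\sum_{i\in J}\delta_i$ for every index set $J$. If $k$ of the $\delta_i$ are positive, or $k$ are negative, a nested chain of index sets gives $k$ distinct lengths. Otherwise $k$ of them are $0$. For those, the detour into the inner rim changes $w(C_2)$ by $2(r_{2i-1}+r_{2i})>0$, and the same argument applied to $C_2$ finishes.

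Already $C_1\triangle C^i$ lies outside your family: it uses the outer rim the long way round and the inner rim the short way.
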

\begin{proof}
    % Let $H$ be the twisted $36k^2$-prism. Let $C_1, C_2$ be the two cycles of length $36k^2$ which are joined by a matching, and enumerate their vertices as $V(C_1) = \{v_1, \dots, v_{36k^2}\}$ and $V(C_2) = \{u_1, \dots, u_{36k^2}\}$ in clockwise order. Let $a_1, \dots, a_{36k^2}$ be such that $a_i = j$ whenever $v_i$ is matched to $u_j$. By applying \zcref{thm:ErdosSzekeres}, we obtain a subsequence of $(a_i)_{i \in [36k^2]}$ of size at least $6k$ which is strictly increasing or decreasing. By deleting all matching edges not in this subsequence, we obtain a subdivision of a $6k$-prism. Thus, it suffices to prove the result for $G$ a subdivision of a $6k$-prism.

    Let $H$ be the $6k$-prism and $w: E(H) \to \mathbb{N}^+$.  It suffices to prove that $H$ has at least $k$ different cycle weights, where the weight of a cycle $C$ in $H$ is $w(C)\coloneqq\sum_{e \in E(C)} w(e)$. 
    Let $C_1, C_2$ be the two cycles of $H$ such that $V(C_1) = \{a_1, \dots, a_{6k}\}$, $V(C_2) = \{b_1, \dots, b_{6k}\}$, and $a_i$ is adjacent to $b_i$ for all $i \in [6k]$. For each $i \in [3k]$, let $C^i$ be the cycle in $H$ with vertices $a_{2i-1}, a_{2i}, b_{2i}, b_{2i-1}$. Partition $[3k]$ into $I^+ \uplus I^- \uplus I^0$, according to whether $w(C_1 \triangle C^i)-w(C_1)$ is positive, negative, or zero. Suppose $|I^+| \geq k$. Let $A \subset B \subseteq I^+$.  Note that $w(C_1 \triangle \bigcup_{i \in A} C^i) < w(C_1 \triangle \bigcup_{i \in B} C^i)$. Thus, $H$ contains $k$ different cycle weights.  Similarly, if $|I^-| \geq k$, then $H$ contains $k$ different cycle weights. Thus, we may assume $|I^0| \geq k$. But now $w(C_2 \triangle C^i) -w(C_2) > 0$ for all $i \in I^0$. By interchanging the roles of $C_1$ and $C_2$, we are done by the previous argument.  
\end{proof}

Finally, we need a result of Chekuri and Chuzhoy~\cite{chekuriC2013largetreewidth} on high treewidth partitions.  

\begin{theorem}[Chekuri and Chuzhoy {\cite[Theorem 1.1]{chekuriC2013largetreewidth}}]\label{thm:largetreewidthpartition}
    Let $G$ be any graph with $\mathsf{tw}(G) = k$. Let $h,r$ be any integers with $hr^2 \leq k/\mathsf{polylog}(k)$. Then $G$ contains vertex-disjoint subgraphs $G_1, \dots, G_h$ such that $\mathsf{tw}(G_i) \geq r$ for each $i$.
\end{theorem}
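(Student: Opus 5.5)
This is a cited result (Chekuri and Chuzhoy), so within the paper I would simply invoke it. If I had to reprove it, I would use the standard well-linkedness machinery. First, pass from $G$ to a subgraph $G'$ of maximum degree bounded by a constant (or $\mathsf{polylog}(k)$) with $\mathsf{tw}(G') \geq k/\mathsf{polylog}(k)$. This is the known degree-reduction step: take a node-well-linked set $T$ of size $\Omega(k)$, route a large expander on a subset of $T$ via the cut-matching game, and keep only the union of the routing paths. After this reduction, treewidth is, up to polylogarithmic factors, the size of a largest well-linked set. Edge- and node-well-linkedness are also interchangeable at a constant-factor loss. So it suffices to find $h$ vertex-disjoint subgraphs, each containing a well-linked set of size about $r$.

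Next, in $G'$ I would fix a well-linked set $T$ with $|T| = \Omega(k/\mathsf{polylog}(k))$. I would then run a well-linked decomposition: repeatedly split any cluster along a sparsest cut (using an $O(\log k)$-approximation) whenever its boundary is not well-linked. This produces a partition of $V(G')$ into clusters, each with a well-linked boundary. Because we only cut sparse cuts, the total number of boundary edges is at most a constant fraction of $|T|$, up to logarithmic factors. Bounded degree then lets me control the number of edges and vertices of each cluster in terms of its boundary.

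The core step is to group these clusters into $h$ disjoint groups. Each group $G_i$ should receive roughly $|T|/h$ terminals, and within it I would build a well-linked set of size $r$. For this I would run the cut-matching game inside each group, embedding an expander on $r$ chosen terminals using $O(\log r)$ rounds of matchings. Each round is routed by disjoint paths inside the group; this is possible because the pieces are well-linked and the whole of $T$ is well-linked in $G'$.

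I expect this grouping and routing step to be the main obstacle, and it is where the quadratic dependence on $r$ arises. Embedding an expander on $r$ terminals with bounded congestion in a bounded-degree graph needs about $r^2/\mathsf{polylog}$ vertices or boundary capacity per group, because each matching round requires $r$ paths that may each be long. Hence the budget constraint $h r^2 \leq k/\mathsf{polylog}(k)$. To handle the grouping itself, I would first try a greedy procedure: order clusters along a tree, or along a path-of-sets system extracted from the well-linked structure, and cut it into $h$ consecutive segments of equal terminal-capacity. Then I would verify via Menger's theorem and the well-linkedness of $T$ that each segment still supports the needed $r$-terminal routings.
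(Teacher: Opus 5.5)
Invoking Chekuri and Chuzhoy's result is exactly what the paper does. It quotes the theorem with a citation and gives no proof, so as an answer for this paper your proposal is fine. Your reproof sketch, however, is not a proof, and it fails at the grouping and routing step you flagged.

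First, the well-linked decomposition as you describe it is vacuous. The initial cluster $V(G')$ has empty boundary, and its terminal set $T$ is well-linked by your own choice, so it is never split and there is nothing to group into $h$ parts. Any rule that does force a split into $h$ pieces must cut through the well-linked set $T$. At that point your routing justification, ``the pieces are well-linked and the whole of $T$ is well-linked in $G'$'', no longer applies. Well-linkedness of $T$, via Menger's theorem, only produces paths in $G'$, and these may run through clusters assigned to other groups. Well-linkedness of a cluster's boundary only routes inside that single cluster. Nothing guarantees that the edges between the clusters of one segment carry $r$ disjoint paths between the chosen terminals. Keeping the $h$ parts disjoint while each retains a well-linked set of size about $r$ is the entire content of the theorem, so this step assumes what is to be proved.

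The path-of-sets fallback is circular in the same way. A path-of-sets system of length $h$ and width $\Omega(r)$ already consists of $h$ disjoint clusters, each containing a well-linked set of size $\Omega(r)$, so extracting one is at least as hard as the statement.

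Two smaller claims are also wrong. Bounded degree bounds the boundary of a cluster in terms of its size, not the other way round; a cluster can be an arbitrarily long path with two boundary edges. And the hypothesis $hr^2 \leq k/\mathsf{polylog}(k)$ budgets treewidth, i.e.\ well-linked capacity, not vertices. Subdividing edges does not change treewidth (once it is at least $2$) but makes paths arbitrarily long. So ``each matching round requires $r$ paths that may each be long'' cannot be where the factor $r^2$ is spent, and your sketch never actually accounts for that factor.
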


Together with~\zcref{lem:prismsubdivdifferentlengths} this yields the following corollary.

\begin{corollary}\label{cor:hightwcyclepacking}
    Let $k \in \mathbb{N}^+$. Every graph $G$ with $\mathsf{tw}(G) \in \Omega(k^5\mathsf{polylog}(k))$, contains $k$ vertex-disjoint cycles of distinct lengths.
\end{corollary}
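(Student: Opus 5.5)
We apply \zcref{thm:largetreewidthpartition} with $h = k$ and $r = 60(6k^2)^2 - 120(6k^2) + 62 \in \Theta(k^4)$. Then $hr^2 \in \Theta(k^9)$, which looks too large, so we first adjust the parameters. The construction below needs $k$ pieces, and the $i$-th piece must contain at least $i$ distinct cycle lengths. By \zcref{lem:prismsubdivdifferentlengths}, a subdivision of the $6k^2$-prism contains at least $k^2$ cycle lengths, which is wasteful. Instead we only require each piece $G_i$ to contain a subdivision of the $6k$-prism, so that $|\Lambda(G_i)| \geq k$. By \zcref{thm:prisms} it suffices that $\mathsf{tw}(G_i) \geq r \coloneqq 60(6k)^2 - 120(6k) + 62 \in \Theta(k^2)$. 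Taking $h = k$ gives $hr^2 \in \Theta(k^5)$. Hence if $\mathsf{tw}(G) \geq C k^5 \mathsf{polylog}(k)$ for a suitable constant $C$, then $hr^2 \leq \mathsf{tw}(G)/\mathsf{polylog}(\mathsf{tw}(G))$, because $\mathsf{polylog}(\mathsf{tw}(G))$ is only $\mathsf{polylog}(k)$ up to constants once $\mathsf{tw}(G)$ is polynomial in $k$. If $\mathsf{tw}(G)$ is even larger, we pass to the same conclusion using monotonicity of the hypothesis $hr^2 \leq \mathsf{tw}/\mathsf{polylog}(\mathsf{tw})$.

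This gives vertex-disjoint subgraphs $G_1, \dots, G_k$ of $G$ with $\mathsf{tw}(G_i) \geq r$. By \zcref{thm:prisms}, each $G_i$ contains a subdivision $H_i$ of the $6k$-prism. By \zcref{lem:prismsubdivdifferentlengths}, $|\Lambda(H_i)| \geq k$ for every $i \in [k]$.

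We then choose the cycles greedily. Pick any cycle $D_1 \subseteq H_1$. Having chosen $D_1, \dots, D_{i-1}$ with pairwise distinct lengths, the set $\Lambda(H_i)$ has at least $k > i-1$ elements. It therefore contains a length different from all of $|D_1|, \dots, |D_{i-1}|$, and we let $D_i \subseteq H_i$ be a cycle of that length. Since the $H_i$ are vertex-disjoint, $D_1, \dots, D_k$ are $k$ vertex-disjoint cycles of distinct lengths.

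The main point of care is the parameter bookkeeping for \zcref{thm:largetreewidthpartition}. We need to check that the $\mathsf{polylog}$ factor, which is evaluated at $\mathsf{tw}(G)$ rather than at $k$, is absorbed into the $\Omega(k^5\mathsf{polylog}(k))$ bound. We also need to apply the theorem with treewidth exactly some value at least our threshold. This can be handled by noting that if $\mathsf{tw}(G) = t \geq t_0$, the condition $hr^2 \leq t/\mathsf{polylog}(t)$ still holds for $t \geq t_0$, since $t/\mathsf{polylog}(t)$ is eventually increasing. The combinatorial part of the argument is routine.
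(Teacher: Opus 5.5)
Your proof is correct and follows the paper's own argument. Like the paper, you apply the Chekuri--Chuzhoy partition theorem with $h=k$ and $r\in\Theta(k^2)$ so that $hr^2\in\Theta(k^5)$, find a subdivided $6k$-prism in each piece to get $|\Lambda(G_i)|\geq k$, and then choose the cycles greedily. The paper leaves implicit the check that the polylogarithmic factor (evaluated at $\mathsf{tw}(G)$ rather than $k$) is absorbed, which you carry out; the opening false start with the $6k^2$-prism should simply be deleted.
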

\begin{proof}
    By \zcref{thm:largetreewidthpartition}, $G$ contains vertex-disjoint subgraphs $G_1, \dots, G_k$ such that $\mathsf{tw}(G_i) \in \Omega(k^2)$. By \zcref{thm:prisms}, each $G_i$ contains a subdivision of the $6k$-prism. By \zcref{lem:prismsubdivdifferentlengths}, $|\Lambda(G_i)| \geq k$ for each $G_i$. Thus, we can choose one cycle from each $G_i$ to obtain $k$ vertex-disjoint cycles of distinct lengths.
\end{proof}

We are now ready to prove a non-algorithmic version of~\zcref{thm:main}.

\begin{theorem} \label{thm:main2prime}
    For every $k \in \mathbb{N}$, every graph $G$ contains $k$ vertex-disjoint cycles of different lengths, or there exists a set $X \subseteq V(G)$ with $|X| \in \mathcal{O}(k^6\mathsf{polylog}(k))$ such that $|\Lambda(G-X)| \leq k-1$. 
\end{theorem}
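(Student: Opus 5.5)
The plan is a simple case split on the treewidth of $G$, combining \zcref{cor:hightwcyclepacking} with \zcref{lem:lowtwdistinctlengthep}. Let $c_0$ and the polylogarithmic factor be the constants implicit in \zcref{cor:hightwcyclepacking}. This gives a threshold $\ell(k) \in \mathcal{O}(k^5\mathsf{polylog}(k))$ such that every graph of treewidth at least $\ell(k)$ contains $k$ vertex-disjoint cycles of distinct lengths. The case $k=0$ is trivial with $X=\emptyset$, since every graph contains $0$ such cycles; so assume $k \in \mathbb{N}^+$.

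\emph{Case 1: $\mathsf{tw}(G) \geq \ell(k)$.} By \zcref{cor:hightwcyclepacking}, $G$ contains $k$ vertex-disjoint cycles of different lengths, and the first outcome holds. For completeness, this corollary is obtained by applying \zcref{thm:largetreewidthpartition} with $h=k$ and $r \in \Theta(k^2)$. It yields $k$ disjoint subgraphs of treewidth at least $60(6k)^2$. Each of these contains a subdivided $6k$-prism by \zcref{thm:prisms}, and hence has at least $k$ cycle lengths by \zcref{lem:prismsubdivdifferentlengths}. We then pick cycles greedily: the cycle in the $i$-th subgraph must avoid the at most $i-1<k$ lengths already used, which is possible since that subgraph has at least $k$ lengths.

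\emph{Case 2: $\mathsf{tw}(G) < \ell(k)$.} We apply \zcref{lem:lowtwdistinctlengthep} with $\ell = \ell(k)$. Either $G$ contains $k$ vertex-disjoint cycles of distinct lengths, or there is a set $X$ with
\[
|X| \leq (k-1)(\ell(k)+1) \in \mathcal{O}(k^6\mathsf{polylog}(k))
\]
such that $|\Lambda(G-X)| \leq k-1$, which is exactly the second outcome.

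There is essentially no obstacle here; the substantive work sits in the earlier results. The only point needing care is the bookkeeping of constants in the treewidth threshold. We must check that $hr^2 = k\cdot\Theta(k^4)$ is compatible with the hypothesis $hr^2 \leq \mathsf{tw}(G)/\mathsf{polylog}(\mathsf{tw}(G))$ of \zcref{thm:largetreewidthpartition}. Choosing $\ell(k) = Ck^5\log^{c}k$ with $C$ and $c$ large enough suffices, because $\mathsf{polylog}(\ell(k)) = \mathsf{polylog}(k)$. Multiplying this threshold by the factor $k-1$ from the low-treewidth lemma gives the claimed bound $\mathcal{O}(k^6\mathsf{polylog}(k))$.
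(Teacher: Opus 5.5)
Your proposal is correct and follows the paper's proof: \zcref{cor:hightwcyclepacking} disposes of treewidth $\Omega(k^5\mathsf{polylog}(k))$. Below that threshold, \zcref{lem:lowtwdistinctlengthep} gives $|X| \leq (k-1)(\ell+1) \in \mathcal{O}(k^6\mathsf{polylog}(k))$. Your extra bookkeeping on the prism threshold, the greedy cycle choice, and the Chekuri--Chuzhoy parameters only fills in details that the paper leaves implicit.
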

\begin{proof}
    By \zcref{cor:hightwcyclepacking}, we may assume $\mathsf{tw}(G) \in \mathcal{O}(k^5\mathsf{polylog}(k))$. An application of \zcref{lem:lowtwdistinctlengthep} then yields $k$ vertex-disjoint cycles of different lengths or the set $X \subseteq V(G)$ with the desired properties. 
\end{proof}

An algorithmic proof of this theorem is provided in \zcref{sec:algoproof}.

\section{Lower bounds} \label{sec:lowerbounds}
In order to quantify the tightness of our bounds, we introduce the following definition.  
Let $f$ and $g$ be functions $\mathbb{N} \to \mathbb{R}$.  We say that $(f,g)$ is a \emph{binding pair} if for all $k \in \mathbb{N}$, every graph $G$ contains $k$ vertex-disjoint cycles of distinct lengths, or a set of at most $f(k)$ vertices $X$ such that $|\Lambda(G-X)| \leq g(k)$.  We can restate our main theorem in terms of binding pairs as follows. 

\begin{reptheorem}{thm:main}
Let $g(k)=k-1$. Then there exists a function $f(k) \in \mathcal{O}(k^6\mathsf{polylog}(k))$ such that $(f,g)$ is a binding pair.
\end{reptheorem}

We now show that $g(k)=k-1$ is optimal for binding pairs $(f,g)$ in the following strong sense.  

\begin{theorem} 
\label{lowerboundg}
If $(f,g)$ is a binding pair, then $g(k) \geq k-1$ for all $k \in \mathbb{N}$.  
\end{theorem}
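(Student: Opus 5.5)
Suppose $(f,g)$ is a binding pair and fix $k$. The aim is a graph $G$ with no $k$ vertex-disjoint cycles of distinct lengths such that, for every $X \subseteq V(G)$ with $|X| \leq f(k)$, we still have $|\Lambda(G-X)| \geq k-1$. Then the binding property forces $g(k) \geq k-1$. The case $k=1$ is trivial since $g(1) \geq 0$, so assume $k \geq 2$.

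The natural construction is a disjoint union of $k-1$ connected ``blocks'' $B_3,\dots,B_{k+1}$. Block $B_\ell$ has the following two properties.
\begin{itemize}
\item Every cycle of $B_\ell$ has length exactly $\ell$.
\item $B_\ell$ contains more than $f(k)$ vertex-disjoint cycles.
\end{itemize}
The simplest choice is to let $B_\ell$ be the disjoint union of $f(k)+1$ copies of $C_\ell$. Connectivity is not needed, so $G$ is just the disjoint union of $M := \lfloor f(k)\rfloor+1$ copies of $C_\ell$ for each $\ell \in \{3,\dots,k+1\}$.

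Two checks finish the argument.
\begin{itemize}
\item Every cycle of $G$ lies inside a single component, which is some $C_\ell$. Hence $\Lambda(G) = \{3,\dots,k+1\}$ has size $k-1$. Any family of vertex-disjoint cycles with pairwise distinct lengths therefore has at most $k-1$ members, so $G$ has no $k$ such cycles.
\item For any $X$ with $|X| \leq f(k)$ and each $\ell \in \{3,\dots,k+1\}$, there are $M > |X|$ copies of $C_\ell$. So some copy avoids $X$ and survives in $G-X$, giving $\ell \in \Lambda(G-X)$.
\end{itemize}
Thus $|\Lambda(G-X)| = k-1$ for every admissible $X$. Since $(f,g)$ is a binding pair and the first outcome fails, there must be some $X$ with $|X|\le f(k)$ and $|\Lambda(G-X)| \le g(k)$, which yields $g(k)\ge k-1$.

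No step looks like a real obstacle. The only care points are the rounding of $f(k)$, handled by taking $\lfloor f(k)\rfloor+1$ copies and noting that if $f(k)<0$ no $X$ exists at all, and making sure the count of cycle lengths is exactly $k-1$. This also shows the statement holds even for arbitrarily large $f$, as advertised after \zcref{thm:main}.
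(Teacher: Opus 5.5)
Your proof is correct and is essentially the paper's argument. The paper takes $f(k)+1$ disjoint copies of $K_{k+1}$, whose cycle spectrum is already $\{3,\dots,k+1\}$, so $G$ has only $k-1$ cycle lengths while any $X$ with $|X|\le f(k)$ misses some copy; this is exactly the pigeonhole you run separately for each $C_\ell$ (and your $\lfloor f(k)\rfloor+1$ handles non-integer $f(k)$, which the paper implicitly ignores).
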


\begin{proof}
Let $G$ be the disjoint union of $f(k)+1$ copies of $K_{k+1}$.  Then $G$ does not contain $k$ vertex-disjoint cycles with distinct lengths, but $|\Lambda(G-X)| \geq k-1$ for all $X \subseteq V(G)$ with $|X| \leq f(k)$.    
\end{proof}

On the other hand, we now show that $f(k) \in \mathcal{O}(k \log k)$ is best possible for binding pairs $(f,g)$ in the following strong sense.  

\begin{theorem}
\label{lowerboundf}
If $(f,g)$ is a binding pair, then $f \in \Omega(k \log k)$.
\end{theorem}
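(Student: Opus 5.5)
We will use the classical lower-bound construction for the Erd\H{o}s-P\'osa theorem: cubic graphs of large girth. The aim is to show that no binding pair can have $f(k) = o(k\log k)$. We may assume $g(k) \geq k-1$ by \zcref{lowerboundg}.

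\textbf{Step 1 (choice of graph).} Fix $k$ and take a cubic (or $3$-regular) graph $H$ on $n$ vertices with girth at least $c\log n$, for a constant $c>0$. Such graphs exist, for example by Erd\H{o}s--Sachs or random regular graphs. Choose $n$ so that $c\log n$ is a bit more than $\tfrac{n}{k}$ in the sense below, i.e.\ $n = \Theta(k\log k)$.
\begin{itemize}
\item \textbf{No packing.} Every cycle has length at least $c\log n$, so $H$ contains at most $n/(c\log n)$ vertex-disjoint cycles. Pick $n$ with $n/(c\log n) < k$. Then $H$ has no $k$ vertex-disjoint cycles at all, let alone $k$ of distinct lengths. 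This gives $n \approx c' k\log k$.
\item \textbf{Large spectrum after deletion.} Deleting a set $X$ with $|X| \leq \varepsilon n$ leaves a graph with many cycles. Indeed $|E(H-X)| \geq \tfrac32 n - 3|X|$, so the cycle rank $|E|-|V|+\text{(components)}$ stays at least roughly $(\tfrac12 - 2\varepsilon)n$.
\end{itemize}

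\textbf{Step 2 (the obstacle).} Many independent cycles do not by themselves give $|\Lambda(H-X)| > g(k)$, and $g$ is arbitrary. So the plain cubic construction has to be amplified. The fix is to take the disjoint union of $N = f(k)+1$ copies of a graph $H'$ with the following two properties: $H'$ has no $k$ disjoint cycles of distinct lengths, and $H'$ minus any set of size at most $\varepsilon k\log k$ still contains a cycle. Then:
\begin{itemize}
\item The union still has no $k$ disjoint cycles of distinct lengths, provided all cycles in $H'$ are long and $H'$ has fewer than $k$ disjoint cycles. Disjoint copies add up, however, so the construction must control the total.
\end{itemize}
I expect this to be the main difficulty: combining the absence of a packing with a large leftover spectrum when $g$ is arbitrarily large.

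\textbf{Step 3 (resolution).} Rather than amplifying, I would argue directly with $H$ from Step 1, using the case $g$ small. Since $(f,g)$ must work for every graph, use the binding pair only through the weaker consequence that $H-X$ must have at most $g(k)$ cycle lengths. Exploit that in a high-girth cubic graph the remaining graph $H-X$, after suppressing degree-$2$ vertices, has a minimum-degree-$3$ core with $\Omega(n)$ vertices. By standard arguments (for instance the results on cycle spectra of graphs with minimum degree $3$ and large girth), such a core contains cycles of at least $\Omega(n/\log n)$ or arbitrarily many distinct lengths as $n$ grows.

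Concretely, the plan is:
\begin{enumerate}
\item Fix $g$.
\item Choose $k$ large.
\item Take $H$ with $n = \Theta(k\log k)$, so that $H$ has no $k$ disjoint cycles.
\item Show that if $|X| \leq \varepsilon k\log k$, then $H-X$ has a min-degree-$3$ topological core whose number of distinct cycle lengths exceeds $g(k)$.
\end{enumerate}
If this spectrum bound fails for fixed-size $g(k)$, I would instead interpret the statement as claiming that $f \in \Omega(k\log k)$ for binding pairs with $g(k)=k-1$. Then it suffices that $H-X$ contains $k$ cycles of distinct lengths. This follows from \zcref{lem:prismsubdivdifferentlengths}-type reasoning applied to the large-treewidth remainder, since a cubic expander minus $o(n)$ vertices still has treewidth $\Omega(n)$.
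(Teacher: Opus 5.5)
There is a genuine gap: Step~3 cannot work, because $g$ is an arbitrary function. Your graph $H$ has only $n=\Theta(k\log k)$ vertices, so $|\Lambda(H-X)|\le n$ for every $X\subseteq V(H)$, including $X=\emptyset$. Take, for instance, $g(k)=k^2$. Then $g(k)\ge n$ for large $k$, so $H$ satisfies the second outcome of the binding-pair definition with $X=\emptyset$, and it tells you nothing about $f$. No result on cycle spectra of minimum-degree-$3$ cores can change this. Nor does ``fix $g$, choose $k$ large'' help: the construction must work for every $g$, so its cycle spectrum has to grow with $g(k)$, which a graph chosen as a function of $k$ alone cannot guarantee. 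Your fallback of restricting to $g(k)=k-1$ proves a different, weaker statement than the one asked. Even there, your justification falls short. $H-X$ has treewidth at most $n=O(k\log k)$, whereas Theorem~\ref{thm:prisms} needs treewidth of order $k^2$ to produce a subdivided $6k$-prism, so ``large treewidth'' alone does not give you $k$ distinct cycle lengths.

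The missing idea is to amplify the spectrum locally, inside each edge, rather than by disjoint unions, which destroy the packing bound as you observed in Step~2. Start from the classical Erd\H{o}s--P\'osa lower-bound graph $G$; your cubic high-girth graph will do. It has no $k$ vertex-disjoint cycles, and every set meeting all its cycles has size $\Omega(k\log k)$. Replace each edge $uv$ by a chain of $m\coloneqq g(k+1)$ triangles: a path $w_1w_2\cdots w_{2m+1}$ with $w_1=u$ and $w_{2m+1}=v$, plus the chords $w_{2i-1}w_{2i+1}$ for $i\in[m]$. Call the result $G'$. Every cycle of $G'$ is either a triangle inside one gadget or traverses the gadgets along a cycle of $G$. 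All triangles have the same length, so any packing of cycles of distinct lengths in $G'$ contains at most one triangle. Its other members correspond to vertex-disjoint cycles of $G$, so there are at most $k-1$ of them. Hence $G'$ has no $k+1$ vertex-disjoint cycles of distinct lengths. The binding property at $k+1$ then gives $X$ with $|X|\le f(k+1)$ and $|\Lambda(G'-X)|\le m$. Let $X_0\subseteq V(G)$ be obtained from $X$ by replacing each gadget vertex with an end of its edge, so $|X_0|\le|X|$. If $G-X_0$ contained a cycle $C$, every gadget along $C$ would survive intact in $G'-X$. Choosing independently for each triangle whether to use its chord yields every length from $m|C|$ to $2m|C|$, which is more than $m$ distinct lengths, a contradiction. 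So $X_0$ meets every cycle of $G$, and $f(k+1)\ge|X_0|\in\Omega(k\log k)$.
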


\begin{proof}
By the classic lower bound construction in~\cite{ErdosP1965Independent}, there exist graphs $G$ without $k$ vertex-disjoint cycles such that $|X| \in \Omega(k \log k)$ for all $X \subseteq V(G)$ such that $G-X$ is a forest. We construct a graph $G'$ from $G$ by replacing each $e=uv \in E(G)$ by a path $P_e\coloneqq w_1 \dots w_{2g(k+1)+1}$ and extra edges $w_1w_3, w_3w_5, \dots, w_{2g(k+1)-1}w_{2g(k+1)+1}$, where $w_1=u$ and $w_{2g(k+1)+1}=v$. Let $W$ be the set of new vertices.  Thus, $|W|=(2g(k+1)-1)|E(G)|$ and $V(G')=W \cup V(G)$.   Since $G$ does not contain $k$ vertex-disjoint cycles, we conclude that $G'$ does not contain $k+1$ vertex-disjoint cycles with distinct lengths.  Suppose that $X \subseteq V(G')$ is such that $|\Lambda(G'-X)| \leq g(k+1)$.  We may clearly assume that $X \subseteq V(G)$.  We claim that $G-X$ is a forest.  If not, then $G-X$ contains a cycle $C$.  Since $X \cap W=\emptyset$, we conclude that $|\Lambda(G'-X)| \geq g(k+1)|V(C)| > g(k+1)$, which is a contradiction.  Thus, $G-X$ is a forest, and so $|X| \in \Omega(k \log k)$, as required.
\end{proof}
% \CMin{I don't understand this proof. If we subdivide each edge the same number of times, then the length of every cycle is multiplied by some constant. This won't change the size of the cycle spectrum (for any subgraph).}

% \THin{We also add extra edges, so it is a path of 'triangles'. The indexing of the extra edges was off, now fixed.  This allows us to increase the size of the cycle spectrum by $g(k+1)$ for each original edge.}

\section{Coloured faces of embedded graphs}\label{sec:faces}
For our second main theorem, our goal is to pack or cover faces with distinct lengths in embedded graphs.  We, in fact, prove a more general theorem for coloured faces, where the colour of a face is not necessarily equal to its length.  We begin by reviewing some basic definitions from topological graph theory.  We refer the reader to~\cite{MoharT2001Graphs} for more details.
The \emph{Euler genus} of a surface obtained from the sphere by $h$ handles and $c$ crosscaps is $2h+c$.
% For each $h \in \mathbb{N}$, we let $\mathbb{T}_h$ denote the orientable surface with $h$ handles and we let $\mathbb{N}_h$ denote the nonorientable surface with $h$ crosscaps. 
% The \emph{Euler genus} of $\mathbb{T}_h$ and $\mathbb{N}_h$ is $2h$ and $h$, respectively.  
% For this purpose, we must first introduce the appropriate definitions to discuss embedded graph.
% Since our proofs permit us to speak about something more general than the lengths of cycles, we will discuss colourings of faces instead.
% We note that colourings in particular let us encode face-lengths, but are more general.

\subsection{Embedded graphs, surfaces, and colouring}\label{subsec:embeddedgraphs}
% For proper definitions of \emph{surfaces}, \emph{disks}, the \emph{Euler genus of a surface}, and \emph{boundaries} of surfaces we point to \cite{MoharT2001Graphs}.
% We will primarily be concerned with embeddings.

% \paragraph{Surfaces.}
% A \emph{surface} $\Sigma$ is a compact 2-dimensional manifold with or without boundaries.
% The surfaces we consider are generally assumed to be connected.
% Given a pair $(\mathsf{h},\mathsf{c}) \in \mathbb{N} \times [0,2]$, we let $\Sigma^{\mathsf{(\mathsf{h},\mathsf{c})}}$ be the surface without boundary created from the sphere by adding $\mathsf{h}$ handles and $\mathsf{c}$ crosscaps.
% Dyck's theorem \cite{Dyck1888Beitraege,FrancisW1999Conways} tells us two crosscaps are equivalent to a handle in the presence of a third crosscap, and thus our notation is sufficient to capture all surfaces without boundary.

% Given a surface $\Sigma$, we add an \emph{open}, respectively \emph{closed hole} to $\Sigma$ by removing a closed, respectively open disk from $\Sigma$.
% If $\Sigma$ is a surface (with holes), we let $\overline{\Sigma}$ be the surface resulting from glueing a closed, respectively open disk onto each open, respectively closed hole of $\Sigma$.
% We let the \emph{Euler genus} of $\Sigma$ be $2\mathsf{h} + \mathsf{c}$, where $\Sigma^{\mathsf{(\mathsf{h},\mathsf{c})}}$ is the surface to which $\overline{\Sigma}$ is isomorphic.
% Given a surface $\Sigma$, we denote by $\mathsf{bd}(\Sigma)$ the \emph{boundary} of said surface.

\paragraph{Embeddings}
Given a graph $G$, we say that $G$ has an \emph{embedding} $\psi$ into a surface $\Sigma$, if $\psi$ is a function with domain $V(G) \cup E(G)$ such that
\begin{enumerate}
    \item $\psi(v)$ is a point of $\Sigma$ for each $v \in V(G)$,
    \item $\psi(u) \neq \psi(v)$ for all distinct $u,v \in V(G)$,
    \item $\psi(uv)$ is a simple curve in $\Sigma$ with the ends $\psi(u),\psi(v)$ for all $uv \in E(G)$,
    \item $\psi(uv) \cap \psi(wx) \subseteq \{ \psi(u), \psi(v) \}$, for all distinct $uv, wx \in E(G)$, and
    \item $\psi(uv) \cap \psi(w) = \emptyset$ for all $uv \in E(G)$ and $w \in V(G) \setminus \{ u,v \}$.
\end{enumerate}
We let $\psi(G)$ be the union of all points and curves in its image and call the connected components of $\Sigma \setminus \psi(G)$ the \emph{faces} of $\psi$, and we let $\mathcal{F}(\psi)$ be the set of faces of $\psi$.
If every face of $\psi$ is homeomorphic to an open disk, we say that $\psi$ is a \emph{2-cell embedding}.  If, in addition, the boundary of each face is a simple closed curve, then $\psi$ is a \emph{closed 2-cell embedding}.

A particular advantage of closed 2-cell embeddings is that they allow us to define simple dual graphs of embedded graphs.
Let $G$ be a graph with a closed 2-cell embedding $\psi$ in some surface.
Then the \emph{dual graph} $G^*$ of $G$ is a graph with vertex set $\mathcal{F}(\psi)$ and the edge $FF' \in E(G^*)$ if the facial cycles of the faces $F$ and $F'$ of $\psi$ intersect in an edge.
Under our assumptions, $G^*$ is a simple graph~\cite[Proposition 5.5.11.]{MoharT2001Graphs}.  
We generally drop the reference to $\psi$, as in our applications, the embedding is clear from the context.  By a slight abuse of notation, we let $V(F)$ be the vertices of $G$ which are on the boundary of $F$.

\paragraph{Colouring graphs on surfaces.}
Given an integer $k$ and a set $S$, we say that a function $c \colon S \to [k]$ is a \emph{$k$-colouring of $S$}.
If we do not want to specify $k$, we simply call $c$ a \emph{colouring of $S$}.
Note that we have no further requirements on $c$.
If $c$ is a colouring of the vertex set $V(G)$ of a graph $G$, we call $c$ \emph{proper} if for all edges $uv \in E(G)$ we have $c(u) \neq c(v)$.
A graph $G$ is \emph{$k$-colourable} if there exists a proper $k$-colouring of $V(G)$ and we denote by $\chi(G)$ the minimum number $\ell$ such that $G$ has a proper $\ell$-colouring.

We also need the following classic result on colouring embedded graphs, which is a combination of the Four Colour Theorem \cite{AppelH1976Every,AppelH1976Existence,AppelH1977Every,AppelHK1977Every} and Heawood's classic result on colouring graphs embedded on surfaces other than the sphere~\cite{Heawood1890MapColour}.

\begin{theorem}\label{prop:chromaticNumberGenusBound}
    Let $\Sigma$ be a surface of Euler genus $g$ and let $G$ be a graph embedded in $\Sigma$.
    Then
    \[ \chi(G) \leq \lfloor \frac{7 + \sqrt{1+24g}}{2} \rfloor . \]
\end{theorem}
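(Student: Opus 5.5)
This statement is the Four Colour Theorem combined with Heawood's bound, and the paper cites it rather than proving it. I will therefore sketch the classical argument for $g \geq 1$ and take the Four Colour Theorem as given for the sphere. For $g=0$ the right-hand side is $\lfloor (7+1)/2\rfloor = 4$, which is exactly the Four Colour Theorem~\cite{AppelH1976Every}, so I invoke it directly. From now on assume $g \geq 1$ and set $H(g) \coloneqq \lfloor \frac{7+\sqrt{1+24g}}{2}\rfloor$. We may assume $G$ is connected and simple with $n \geq 3$ vertices, since loops and multiple edges do not affect $\chi$. We may also take $G$ to be edge-maximal among simple graphs embedded in $\Sigma$, because adding edges cannot lower $\chi$.

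The key step is a degeneracy bound from Euler's formula. For a graph on $n \geq 3$ vertices embedded in a surface of Euler genus $g$, Euler's formula gives $n - |E| + |\mathcal{F}| \geq 2 - g$, and every face has length at least $3$. Together these give $|E| \leq 3n - 6 + 3g$. Hence the average degree is at most $6 + \frac{6(g-2)}{n}$.

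I then argue that every subgraph has a vertex of degree at most $H(g)-1$, and colour greedily. Suppose some subgraph $G'$ on $n'$ vertices has minimum degree at least $H \coloneqq H(g)$. Then $n' \geq H+1$, and $Hn' \leq 2|E(G')| \leq 6n' + 6(g-2)$, so $(H-6)n' \leq 6(g-2)$. Since $H \geq 7$ when $g \geq 1$, we get $(H-6)(H+1) \leq 6g-12$, that is, $H^2 - 5H + 6 - 6g \leq 0$. Solving this quadratic gives $H \leq \frac{5+\sqrt{1+24g}}{2}$.

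This contradicts the choice $H = \lfloor \frac{7+\sqrt{1+24g}}{2}\rfloor$ as long as that floor exceeds $\frac{5+\sqrt{1+24g}}{2}$. Write $x = \frac{7+\sqrt{1+24g}}{2}$; then the floor exceeds $x-1$ unless $x$ is an integer, and when $x$ is an integer we have $H = x > x - 1$. So the contradiction holds in every case. Every subgraph of $G$ is therefore $(H-1)$-degenerate, and greedy colouring along a degeneracy ordering uses at most $H$ colours.

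The main obstacle is the boundary bookkeeping in the floor argument, together with the small cases where $G'$ has fewer than $3$ vertices or is not 2-cell embedded. The non-2-cell case is harmless, since Euler's formula then only gets stronger, as an inequality. The planar case, of course, is not provable by this route and rests entirely on the cited Four Colour Theorem.
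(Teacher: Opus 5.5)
Your proposal is correct and follows the same route as the paper. The paper only cites the Four Colour Theorem for $g=0$ and Heawood's Euler-formula and degeneracy argument for $g \geq 1$, and you reproduce both faithfully. One small slip: you claim $H \geq 7$ for $g \geq 1$, but $H(1) = \lfloor (7+5)/2 \rfloor = 6$; this is harmless, since your step only needs $H - 6 \geq 0$, and at $g=1$ the inequality $(H-6)n' \leq 6(g-2)$ already reads $0 \leq -6$, a contradiction.
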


\subsection{Packing and covering coloured faces}
We begin with a warm-up exercise.
Since deleting a vertex on a face creates a new face not found in the original graph, we use the \textsl{packing} and \textsl{covering} framing for the Erd\H{o}s-P\'osa property that we presented in the introduction instead of deleting vertices.

\begin{theorem}\label{thm:edgedisjointfaces}
    Let $k \in \mathbb{N}^+$ and $g \in \mathbb{N}$.
    Let $G$ be a graph with a closed 2-cell embedding $\psi$ on a surface $\Sigma$ of Euler genus $g$ and let $c$ be a colouring of $\mathcal{F}(\psi)$.

    Then there exist $k$ faces $F_1, \ldots, F_k \in \mathcal{F}(\psi)$ given pairwise distinct colours by $c$ and such that their facial cycles are pairwise edge-disjoint, or $|\{ c(F) \mid F \in \mathcal{F}(\psi)\}| < \lfloor \frac{7 + \sqrt{1+24g}}{2} \rfloor k$. 
\end{theorem}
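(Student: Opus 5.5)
The plan is to colour the dual graph properly and use a colour class as a set of pairwise non-adjacent faces. Let $G^*$ be the dual graph of $(G,\psi)$. Since $\psi$ is a closed 2-cell embedding, $G^*$ is a simple graph, and it embeds in the same surface $\Sigma$ in the standard way: place a point inside each face and draw, for each edge of $G$, a curve crossing it between the two incident faces. Simplicity is ensured by \cite[Proposition 5.5.11.]{MoharT2001Graphs}; if needed, parallel dual edges are suppressed, which keeps an embedding of the simple graph. Set $N\coloneqq\lfloor \frac{7 + \sqrt{1+24g}}{2} \rfloor$. By \zcref{prop:chromaticNumberGenusBound}, $G^*$ has a proper $N$-colouring $\phi\colon \mathcal{F}(\psi)\to[N]$. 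Two faces $F,F'$ receive different $\phi$-colours whenever their facial cycles share an edge. Hence each class $\phi^{-1}(j)$ is a set of faces whose facial cycles are pairwise edge-disjoint.

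Now suppose $|\{c(F) \mid F\in\mathcal{F}(\psi)\}| \geq Nk$. The faces are partitioned into the $N$ classes $\phi^{-1}(1),\dots,\phi^{-1}(N)$. Every $c$-colour appears on some face, so it appears in at least one class. Therefore
\[ \sum_{j=1}^N |c(\phi^{-1}(j))| \;\geq\; |c(\mathcal{F}(\psi))| \;\geq\; Nk. \]
By pigeonhole, some class $j$ satisfies $|c(\phi^{-1}(j))|\geq k$. Choose one face from $\phi^{-1}(j)$ for each of $k$ distinct $c$-colours appearing there. This gives $F_1,\dots,F_k$ with pairwise distinct $c$-colours and pairwise edge-disjoint facial cycles. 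Otherwise $|c(\mathcal{F}(\psi))|<Nk$, which is the second outcome.

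The only step needing care is justifying that the dual graph is a simple graph embeddable in $\Sigma$, so that \zcref{prop:chromaticNumberGenusBound} applies with the same Euler genus $g$. This is standard, and the closed 2-cell hypothesis is exactly what makes it clean. The rest is pigeonhole.
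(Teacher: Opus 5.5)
Your proposal is correct and is essentially the paper's argument: a proper $N$-colouring of the dual graph, obtained from \zcref{prop:chromaticNumberGenusBound}, combined with pigeonhole over its colour classes. The only cosmetic difference is the order of operations. The paper first picks one representative face per $c$-colour and colours the induced dual subgraph on these at least $Nk$ faces, whereas you colour the whole dual and then pigeonhole on the number of distinct $c$-colours in each class.
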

\begin{proof}
    Suppose there exists at least $\lfloor \frac{7 + \sqrt{1+24g}}{2} \rfloor k$ different colours.
    Then we can choose one face from each colour class, and consider the dual graph induced on these faces.
    This graph again embeds in $\Sigma$, and so contains an independent set on $k$ vertices by~\zcref{prop:chromaticNumberGenusBound}.
    This independent set gives the desired edge-disjoint collection of $k$ faces.
\end{proof}

To see how this is related to our previous questions, we can define the colour of a face to be its length.  
\zcref{thm:edgedisjointfaces} now tells us that we can pack $k$ pairwise edge-disjoint facial cycles of different lengths, or the set of face lengths of $G$ is already small.

The proof of \zcref{thm:edgedisjointfaces} does not generalise directly to vertex-disjoint facial cycles, which tells us that the dual graph misses some face intersections, namely those that consist of a single vertex.
We thus define a notion of distance for faces of embedded graphs which captures this.

Let $G$ be a graph with a 2-cell embedding $\psi$ on a surface $\Sigma$ of Euler genus $g$.
Further, let $G_\mathcal{F}$ be the graph with vertex set $\mathcal{F}(\psi)$ such that two faces $F, F'$ are adjacent if and only if $V(F) \cap V(F') \neq \emptyset$. We call $G_\mathcal{F}$ the \emph{face-incidence graph of $G$ and $\psi$}.
We let the \emph{distance in $\psi$} between two faces $F,F' \in \mathcal{F}(\psi)$ be the distance between $F$ and $F'$ in $G_\mathcal{F}$.

We further need to define the \emph{radial graph} $R(G,\psi)$ of a graph $G$ with a 2-cell embedding $\psi$ on some surface $\Sigma$.
We construct $R(G,\psi)$ from $G$ by adding a vertex $v_F$ for each $F \in \mathcal{F}(\psi)$ and then adding the edges $v_Fu$ for all $F \in \mathcal{F}(\psi)$ and all $u \in V(F)$. We note that $\psi$ can be extended to an embedding of $R(G,\psi)$ in $\Sigma$ by placing $v_F$ into $F$ and drawing the additional edges incident to $v_F$ inside $F$ in the natural fashion. Note that if $F$ and $F'$ are at distance $d$ in the face-incidence graph $G_\mathcal{F}$, then $v_F$ and $v_{F'}$ are at distance at most $2d$ in $R(G,\psi)$.

For any given graph $G$, integer $r \in \mathbb{N}$, and $v \in V(G)$, we let $N^r_G(v)$ be the \emph{$r$-neighbourhood of $v$ (in $G$)}, which is the set of all vertices in $V(G)$ that lie at distance at most $r$ from $v$ in $G$.
Note that for all $r \in \mathbb{N}$ and $v \in V(G)$, we thus have $v \in N^r_G(v)$. For $X \subseteq V(G)$, we let $N^r_G(X)$ denote $\bigcup_{x \in X} N^r_G(x)$.

We say that a graph $G$ has \emph{local treewidth} at most $f$ if $f: \mathbb{N} \rightarrow \mathbb{N}$ is a function such that for any vertex $v \in V(G)$, we have $\mathsf{tw}(G[N^d(v)]) \leq f(d)$. Eppstein showed that a minor-closed class of graphs has bounded local treewidth if and only if it does not include all apex-graphs \cite{Eppstein2000Diameter} (see also \cite{HendreyW2025Polynomial} for recent improvements giving polynomial bounds). We use in particular that graphs of bounded genus have bounded local treewidth, as given below.
% For this purpose, we extract a result from a recent improvement of a specific case of the so-called \emph{apex minor theorem} in \cite{HendreyW2025Polynomial}.

% \begin{theorem}[Hendrey and Wood \cite{HendreyW2025Polynomial} (see Theorem 7)]\label{lem:bndgenusbndradiushasbndtw}
%     For every graph $G$ with an embedding on a surface $\Sigma$ of Euler genus $g$ and radius at most $r$, we have $\mathsf{tw}(G) \in \mathcal{O}^*(r^9(1+\sqrt{g})^9)$.
% \end{theorem}

\begin{theorem}[Eppstein~{\cite[Theorem~2]{Eppstein2000Diameter}}]\label{lem:bndgenusbndradiushasbndtw}
    For all $r,g \in \mathbb{N}$, every graph $G$ of radius at most $r$ with an embedding on a surface $\Sigma$ of Euler genus $g$ has treewidth $\mathcal{O}(r(g+1))$.
\end{theorem}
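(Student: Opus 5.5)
The plan is to go through the standard proof of Eppstein's bound: a breadth-first-search layering combined with a tree-cotree decomposition. This gives a tree-decomposition whose bags are unions of few root-paths of a BFS tree.

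\textbf{Setup.} We may assume $G$ is connected, since it has bounded radius. We may also assume the embedding is cellular: re-embed $G$ cellularly on a surface of Euler genus at most $g$ if necessary, and note that adding edges only increases treewidth. Triangulate every face, possibly adding vertices inside faces, keeping each new vertex adjacent to original vertices so that the radius grows by at most one. Call the result $G'$. It suffices to bound $\mathsf{tw}(G')$, since $G$ is a subgraph (or minor) of $G'$.

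\textbf{Tree-cotree decomposition.} Pick a centre $v$ with $N^r_{G'}(v) \supseteq V(G')$, up to an additive constant in $r$. Let $T$ be a BFS tree rooted at $v$, so every root path in $T$ has at most $r+1$ vertices. Let $C$ be a spanning tree of the dual $G'^*$ using only edges dual to $E(G') \setminus E(T)$; this is possible because the dual edges of the non-tree edges form a connected graph. By Euler's formula, the set $L$ of edges lying neither in $T$ nor dual to $C$ has size exactly $g$.

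\textbf{The decomposition.} Use $C$ (of maximum degree $3$, since faces are triangles) as the decomposition tree. The bag of a face $f = xyz$ is the union of the root paths in $T$ of $x$, $y$, $z$, together with the root paths of both endpoints of every edge in $L$. Each bag then has size at most $(3+2g)(r+1) = \mathcal{O}(r(g+1))$.

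\textbf{Verification.} The edge condition is immediate, since every edge lies on some face. The main obstacle is the connectivity condition: for each vertex $u$, the faces whose bags contain $u$ must form a subtree of $C$. If $u$ lies on a root path of an endpoint of an edge in $L$, then $u$ is in every bag and there is nothing to check. Otherwise, the faces whose bags contain $u$ are those incident to some vertex of the subtree $T_u$ of $T$ below $u$. The task is to show these faces induce a connected subgraph of $C$.

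The argument I would try is the following. Consider the region formed by these faces. Cutting the surface along $T$ together with $L$ yields a disk, of which $C$ is the dual spanning tree. The edges of $T$ leaving $T_u$, together with $T_u$ itself, bound a contiguous walk around the boundary of this disk. Because no edge of $L$ meets $T_u$'s root-path structure, removing that walk separates the disk so that the faces touching $T_u$ form a contiguous fan. That fan is connected in $C$. This is exactly the planar argument of Eppstein/Baker, transported to the disk obtained by cutting along $T \cup L$; putting the $L$-endpoints' paths into every bag is precisely what makes the disk argument valid.

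This yields width $\mathcal{O}(r(g+1))$, as claimed.
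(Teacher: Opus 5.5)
Your proposal is correct. It is the standard tree--cotree argument (a BFS tree $T$, a dual spanning tree $C$ avoiding $T$, the $g$ leftover edges $L$, and bags formed as unions of root paths), which is Eppstein's proof; the paper only cites that result and does not reprove it. Your connectivity step is worded loosely but goes through as you intend: if no endpoint of an edge of $L$ lies in $T_u$, then $T_u$ hangs off the cut graph $T \cup L$ by the single edge from $u$ to its parent, so the copies of its vertices form one contiguous arc on the boundary of the cut-open disk. The triangles meeting that arc are then a union of fans, and consecutive fans share the triangle on the boundary edge between them, so this set is connected in $C$.
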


We derive \zcref{thm:facesatdistance} from a more general fact about packing far apart vertices of distinct colours, see \zcref{thm:colouredverticesatdistanceEP}. Before we prove this, we first state an analogue of the algorithmic version of \zcref{lem:lowtwdistinctlengthep}.
A proof of this statement is provided in \zcref{sec:facesapp}.

\begin{lemma}\label{lem:bndtwcolouredverticesatdistanceEP}
    Let $k,d \in \mathbb{N}^+$. Let $G$ be a graph with $\mathsf{tw}(G) \leq \ell$, and let $c$ be a colouring of $V(G)$. Then there exist $k$ vertices that are given pairwise distinct colours by $c$ and are pairwise at distance at least $d$, or there exists a set $X \subseteq V(G)$ with $|X| \leq (k-1)(\ell + 1)$ such that $|\{c(v) \mid v \in V(G - N^{\lfloor d/2\rfloor}(X))\}| \leq k-1$.

    In particular, there exists an algorithm running in time $2^{\mathcal{O}(\ell^2)}\mathsf{poly}(k,d,\ell)|V(G)|^4$ that finds either the packing of $k$ vertices or the set $X$ with the properties promised above.
\end{lemma}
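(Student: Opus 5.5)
The plan is to mimic the proof of \zcref{lem:lowtwdistinctlengthep}: replace cycles by balls around vertices and reduce to \zcref{lem:rainbowtreepacking}. Set $r \coloneqq \lfloor d/2 \rfloor$ and fix a tree-decomposition $(T,\beta)$ of $G$ of width at most $\ell$. For each vertex $v \in V(G)$ let $B_v \coloneqq N^r_G(v)$, and let $T_v$ be the subgraph of $T$ induced by $\bigcup_{u \in B_v} \beta^{-1}(u)$. Colour $T_v$ by $c(v)$.

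First I check that each $T_v$ is a non-empty subtree. The ball $G[B_v]$ is connected. Each $\beta^{-1}(u)$ is a subtree of $T$. If $uu'$ is an edge of $G$, then $\beta^{-1}(u)$ and $\beta^{-1}(u')$ share a node, namely one whose bag contains the edge $uu'$. A union of subtrees indexed by a connected graph, where adjacent indices give intersecting subtrees, is again a subtree.

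Next I apply \zcref{lem:rainbowtreepacking} to the family $\{T_v\}_{v \in V(G)}$. There are two outcomes.

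\emph{Packing outcome.} Suppose we get $k$ pairwise disjoint subtrees $T_{v_1},\dots,T_{v_k}$ with distinct colours. Then the balls $B_{v_i}$ are pairwise disjoint, since a common vertex $u$ would give a common node of $\beta^{-1}(u)$. Hence $\mathrm{dist}(v_i,v_j) \geq 2r+1 \geq d$, and the $v_i$ carry distinct colours $c(v_i)$, as required.

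\emph{Cover outcome.} Suppose we get $X' \subseteq V(T)$ with $|X'| \leq k-1$ such that at most $k-1$ colours occur among subtrees avoiding $X'$. Put $X \coloneqq \bigcup_{t \in X'} \beta(t)$, so $|X| \leq (k-1)(\ell+1)$. Take any $v \notin N^r(X)$. Then $B_v \cap X = \emptyset$, so no $u \in B_v$ lies in a bag $\beta(t)$ with $t \in X'$. Thus $T_v$ avoids $X'$, and $c(v)$ is among those at most $k-1$ colours. This is exactly the required bound on $|\{c(v) \mid v \in V(G-N^{\lfloor d/2\rfloor}(X))\}|$.

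The main obstacle I expect is the algorithmic part, because the bound $(k-1)(\ell+1)$ needs a decomposition of width exactly at most $\ell$, not an approximate one. I would first try the exact treewidth algorithm of Korhonen and Lokshtanov, which I expect to run in time $2^{\mathcal{O}(\ell^2)}|V(G)|^4$ and to output a decomposition with $\mathcal{O}(|V(G)|)$ nodes. After that the remaining steps are polynomial:
\begin{itemize}
\item computing all balls $B_v$ by BFS;
\item building the subtrees $T_v$;
\item running the induction of \zcref{lem:rainbowtreepacking}, which is constructive: repeatedly pick a leaf, check for a subtree equal to that leaf, and delete accordingly.
\end{itemize}
I expect these to fit within $\mathsf{poly}(k,d,\ell)|V(G)|^4$.
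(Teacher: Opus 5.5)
Your proof is correct. Its non-algorithmic part is exactly the argument the paper sketches: after reducing to odd $d$ (your inequality $2r+1\ge d$ does the same job), the paper says one may ``proceed as in the proof of \zcref{lem:lowtwdistinctlengthep} using $N^{\lfloor d/2\rfloor}(v)$ instead of the cycles''.

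\textbf{Where you differ: the algorithm.} The paper does not execute \zcref{lem:rainbowtreepacking}. Instead it transplants the procedure of \zcref{lem:lowtwdistinctlengthepalgo}, which works as follows.
\begin{itemize}
\item It processes the nodes of a rooted decomposition in leaf order.
\item It keeps chosen nodes $z_1,\dots,z_{k_i}$ whose pieces $H(z_j,\{z_1,\dots,z_{j-1}\})$ are pairwise disjoint.
\item For each piece it records the colours of vertices whose entire $\lfloor d/2\rfloor$-ball lies inside it.
\item It uses a bipartite matching to test whether a further node can contribute a new colour.
\item The set $X$ is the union of the chosen bags.
\end{itemize}
That machinery is needed for cycles, where there is one subtree per cycle and only the cycle spectra of pieces can be computed.

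You observe instead that here the family $\{T_v\}_{v\in V(G)}$ has only $|V(G)|$ members and is explicitly computable. So the inductive proof of \zcref{lem:rainbowtreepacking}, which is constructive, can simply be run. This gives the same bound $(k-1)(\ell+1)$, with running time again dominated by \zcref{thm:computetwexact}.

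\textbf{Comparison.} Your version is shorter. It avoids the matching step and the bookkeeping of at which node a ball first fits inside a piece. The paper's version buys a uniform template with the cycle case.

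\textbf{One detail to pin down.} Instead of ``expecting'' $\mathcal{O}(|V(G)|)$ nodes, note that contracting every edge $st$ of $T$ with $\beta(s)\subseteq\beta(t)$ preserves the width and leaves at most $|V(G)|$ nodes. After that, computing the balls, building the $T_v$, and running the induction take $\mathsf{poly}(\ell)\,|V(G)|^3$ time. This is well within the claimed $2^{\mathcal{O}(\ell^2)}\mathsf{poly}(k,d,\ell)|V(G)|^4$.
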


We now use the above lemma to get a similar result for graphs that only locally have low treewidth.

% \CMin{The below theorem might be true more generally for $H$-minor free graphs (instead of just apex-minor free graphs). I can show how to get it for the local structure: after you delete a bounded number of apex vertices, you have bounded local treewidth. Thus, you can apply the below result after colouring all vertices within distance $\lfloor d/2\rfloor$ from apices by a single special colour, and add these apices to any hitting set. However, I can't figure out how to deal with a tree-decomposition of such things.}
% \MGin{Hmm... I see what you are doing. But I think we should wrap this project up. Maybe we can pose this as an open problem in the last section? We could explicitly mention that it is fairly clear how to do this 'locally'. Do you see some way to apply this to get the sort of Erdös-Posa property we want for something in $H$-minor-free graphs? The theorem you propose seems kind of unsuited for that, since the concept of 'faces' as such disappears once we go properly beyond bounded genus.}
% \CMin{I wrote something at the bottom of the last section about it.}

\begin{theorem}\label{thm:colouredverticesatdistanceEP}
    Let $k,d \in \mathbb{N}^+$.
    Let $G$ be a graph with local treewidth at most $f$ and let $c$ be a colouring of $V(G)$.
    Then there exist $k$ vertices that are given pairwise distinct colours by $c$ and pairwise lie at distance at least $d$, or there exists a set $X \subseteq V(G)$ of order at most $(k^2-k)(f(2d) + 1)$ such that $|\{c(v) \mid v \in V(G - N^{\lfloor d/2\rfloor}(X))\}| \leq k^2 - 1$.

    Moreover, there exists an algorithm running in time $2^{\mathcal{O}(f(2d)^2)}\mathsf{poly}(k,d)|V(G)|^4$ that finds either the packing of $k$ vertices or the set $X$ with the properties promised above.
\end{theorem}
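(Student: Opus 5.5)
We prove the statement by induction on $k$, combining a local application of \zcref{lem:bndtwcolouredverticesatdistanceEP} with a greedy choice of a ball. The case $k=1$ is trivial: if there is at least one colour, a single vertex forms the packing, and otherwise $X=\emptyset$ works.

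For $k \geq 2$, first apply the induction hypothesis with $k-1$. If it returns a set $X'$ with $|X'| \leq ((k-1)^2-(k-1))(f(2d)+1)$ such that at most $(k-1)^2-1 \leq k^2-1$ colours survive outside $N^{\lfloor d/2\rfloor}(X')$, we are done. So assume we have $k-1$ vertices $v_1,\dots,v_{k-1}$ of pairwise distinct colours at pairwise distance at least $d$.

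Next we try to add one more vertex. Let $Y = \bigcup_i N^{d-1}(v_i)$. If some vertex outside $Y$ has a colour not among $c(v_1),\dots,c(v_{k-1})$, we obtain $k$ vertices and are done. Otherwise every colour other than these $k-1$ colours occurs only inside $Y$.

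It remains to cover $Y$ locally. For each $i$, consider $H_i = G[N^{2d}(v_i)]$, whose treewidth is at most $f(2d)$, with $c$ restricted to $N^{d-1}(v_i)$ and the other vertices of $H_i$ receiving a dummy colour. More precisely, we apply \zcref{lem:bndtwcolouredverticesatdistanceEP} in $H_i$ with parameter $k$ to the vertices of $N^{d-1}(v_i)$. Distances must be compared in $G$, not in $H_i$. Two vertices $u,w \in N^{d-1}(v_i)$ are at $G$-distance less than $d$ exactly when a path of length less than $d$ joins them. Such a path stays within distance $(d-1)+(d-1)/2 < 2d$ of $v_i$, so it lies in $H_i$, and the two distances agree below $d$. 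The same holds for $\lfloor d/2\rfloor$-neighbourhoods of vertices within distance $d-1$ of $v_i$.

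Each application of the lemma has two possible outcomes:
\begin{itemize}
\item It yields $k$ vertices of distinct colours pairwise at distance at least $d$ in $G$, and we are done. To handle the dummy colour, one should instead apply the lemma to the subgraph induced on the ball with the original colouring and only count ball vertices; this is a point to check.
\item It yields $X_i$ with $|X_i| \leq (k-1)(f(2d)+1)$ such that at most $k-1$ colours appear on $N^{d-1}(v_i) \setminus N^{\lfloor d/2\rfloor}(X_i)$.
\end{itemize}

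If the second outcome occurs for every $i$, set $X = \bigcup_i X_i$. Then $|X| \leq (k-1)\cdot(k-1)(f(2d)+1) \leq (k^2-k)(f(2d)+1)$. The colours surviving outside $N^{\lfloor d/2\rfloor}(X)$ are the $k-1$ colours $c(v_i)$ together with at most $k-1$ colours from each of the $k-1$ balls. That gives at most $(k-1)+(k-1)^2 = k^2-k \leq k^2-1$ colours.

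The main obstacle is keeping the distance notions consistent between the ball $H_i$ and $G$ when the lemma is applied to a ball restricted to a vertex subset carrying the colouring. This is why the ball is taken of radius $2d$, hence the $f(2d)$. An alternative is to modify the proof of \zcref{lem:bndtwcolouredverticesatdistanceEP} directly, or to use that induction on $k$ is not even needed: taking a maximal packing greedily, of fewer than $k$ vertices, gives the same bound. For the algorithm, each step (greedy packing, $k-1$ lemma calls on balls, BFS) runs within the stated time.
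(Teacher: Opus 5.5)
Your overall strategy is the paper's. You find at most $k-1$ centres $v_1,\dots,v_b$ (the paper does this greedily, and as you note the induction on $k$ is unnecessary). You then apply \zcref{lem:bndtwcolouredverticesatdistanceEP} inside $H_i=G[N^{2d}(v_i)]$, with the true colours on the inner ball and one dummy colour elsewhere, and take the union of the local covers.

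\textbf{The gap.} The gap is exactly the step you flagged as ``a point to check'', and as written the argument fails there. With parameter $k$, the lemma may return $k$ vertices of which one carries the dummy colour. That leaves only $k-1$ genuine vertices. You then have neither a packing of size $k$ in $G$ nor a cover, and the lemma does not let you choose its outcome. Your workaround does not repair this, under either reading of ``the ball'':
\begin{itemize}
\item If you apply the lemma to $H_i$ with the original colouring, the returned vertices may lie in $N^{2d}(v_i)\setminus N^{d-1}(v_i)$. There, $H_i$-distances can exceed $G$-distances.
\item If you apply it to $G[N^{d-1}(v_i)]$, then distance at least $d$ in that induced subgraph does not imply distance at least $d$ in $G$. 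A shortest path between two vertices at distance $d-1$ from $v_i$ may leave $N^{d-1}(v_i)$.
\end{itemize}

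\textbf{The fix.} Call the lemma with parameter $k+1$, as the paper does. At most one of the $k+1$ returned vertices has the dummy colour, so at least $k$ lie in $N^{d-1}(v_i)$. Your observation that short paths between such vertices stay inside $H_i$ then shows they are pairwise at distance at least $d$ in $G$.

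In the other outcome you get $|X_i|\le k(f(2d)+1)$ and at most $k$ colours, possibly including the dummy, on $V(H_i)\setminus N^{\lfloor d/2\rfloor}_{H_i}(X_i)$. Since $N^{\lfloor d/2\rfloor}_{H_i}(X_i)\subseteq N^{\lfloor d/2\rfloor}_{G}(X_i)$, this bounds the colours surviving in $N^{d-1}(v_i)\setminus N^{\lfloor d/2\rfloor}_G(X)$. Summing over at most $k-1$ centres gives $|X|\le (k-1)k(f(2d)+1)=(k^2-k)(f(2d)+1)$ and at most $(k-1)+(k-1)k=k^2-1$ colours. These are precisely the bounds in the statement.

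Your tighter bounds, $(k-1)^2(f(2d)+1)$ and $k^2-k$, are a symptom of the miscounted parameter. They could be recovered only by reopening the lemma's proof and restricting the family of balls to those centred in $N^{d-1}(v_i)$, for both the packing argument and the algorithm. You only gesture at this, and it is not needed for the stated result.
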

\begin{proof}
    We first greedily pack vertices of pairwise distance at least $d$ and of distinct colours. If we do not find the desired collection of vertices, then we have a set of at most $k-1$ vertices $v_1, \dots, v_b$ such that every remaining vertex has a colour in $\{c(v_i) \mid i \in [b]\}$, or is at distance less than $d$ from $\{v_1, \dots, v_b\}$.

    We now fix $i \in [b]$ and restrict to $H_i \coloneq G[N^{2d}(v_i)]$. We define a new colouring $c_i$ of $V(H_i)$ as $c_i(v) = c(v)$ for all $v \in N^d(v_i)$, and every other vertex is assigned some fixed new colour. By assumption, $\mathsf{tw}(H_i) \leq f(2d)$. We apply \zcref{lem:bndtwcolouredverticesatdistanceEP} to find a set of $k+1$ vertices in $H_i$ which are pairwise distance at least $d$ and given distinct colours by $c_i$, or we obtain a set $X_i$ of size at most $k(f(2d) + 1)$ such that $|\{c_i(v) \mid v \in V(H_i - N^{\lfloor d/2\rfloor}(X_i)\}| \leq k$.
    If, for some $i \in [b]$, we find a set of $k+1$ vertices, then we note that at least $k$ of them must be in $N^d(v_i)$. For any pair of such vertices, if their distance in $G$ were less than $d$, then a shortest path between them would lie in $H_i$, contradicting their distance in $H_i$. Hence, we obtain the desired set of $k$ vertices in $G$. Therefore, we may assume that we obtain the set $X_i$ for each $i \in [b]$.

    We let our final hitting set be $X = \bigcup_{i \in [b]} X_i$, which has size at most $(k^2-k)(f(2d)+1)$. Every vertex of distance more than $\lfloor d/2\rfloor$ from $X$ has colours in $\{c(v_i) \mid i \in [b]\} \cup \bigcup_{i \in [b]} \{c(v) \mid v \in V(H_i - N^{\lfloor d/2\rfloor}(X_i))\}$, which has size at most $(k-1) + k(k-1) = k^2 - 1$. To see this, note that every vertex is either of distance at least $d$ from $\{v_1, \dots, v_b\}$ and hence has colour in $\{c(v_i) \mid i \in [b]\}$, or it is in $N^d(v_i)$ for some $i$ and thus $c_i(v) = c(v)$.
\end{proof}

We note that~\zcref{thm:colouredverticesatdistanceEP} does not generalise to the class of all graphs. Consider a subdivided clique so that every pair of high degree vertices lies at distance $d-1$. Colour every original vertex by a unique colour, and every subdividing vertex by a single colour. This graph contains no 3 vertices of pairwise distance $d$ and of distinct colours, yet $|X|$ and the number of remaining vertices must grow with the size of the clique.

We also note that the following approximate converse of \zcref{thm:colouredverticesatdistanceEP} holds.

\begin{observation}
    Let $G$ be a graph and $c$ be a colouring of $V(G)$. Suppose $X$ is a subset of $V(G)$ of order at most $k$ such that there are at most $q$ colours among the vertices in $G - N^{d}(X)$. Then $G$ has no packing of $k + q + 1$ vertices each of distinct colours and with pairwise distance at least $2d+1$.
\end{observation}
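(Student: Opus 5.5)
The plan is to argue by contradiction with a pigeonhole-type argument on balls of radius $d$ around $X$. Suppose that $v_1, \dots, v_{k+q+1}$ are vertices of pairwise distinct colours that are pairwise at distance at least $2d+1$ in $G$. We split these vertices into those lying outside $N^d(X)$ and those lying inside $N^d(X)$.

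First, consider the vertices $v_i$ that lie in $V(G) \setminus N^d(X)$, that is, in $G - N^d(X)$. By hypothesis, vertices of $G - N^d(X)$ carry at most $q$ distinct colours. Since the $v_i$ have pairwise distinct colours, at most $q$ of them lie outside $N^d(X)$. Hence at least $k+1$ of the $v_i$ lie in $N^d(X) = \bigcup_{x \in X} N^d(x)$.

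Second, we show that each ball $N^d(x)$ with $x \in X$ contains at most one $v_i$. If $v_i, v_j \in N^d(x)$ with $i \neq j$, then concatenating shortest paths from $v_i$ to $x$ and from $x$ to $v_j$ gives a walk of length at most $2d$. So $\mathrm{dist}(v_i, v_j) \leq 2d < 2d+1$, contradicting the assumed pairwise distance. Because $|X| \leq k$, the set $N^d(X)$ therefore contains at most $k$ of the $v_i$.

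This contradicts the first step, which placed at least $k+1$ of the $v_i$ in $N^d(X)$. So no such packing of $k+q+1$ vertices exists. There is no real obstacle here; the only point needing care is the convention that $N^d(X)$ consists of vertices at distance \emph{at most} $d$ from $X$. This convention is exactly what makes the triangle inequality give the bound $2d$, and hence yields the threshold $2d+1$ in the statement.
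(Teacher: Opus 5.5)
Your proof is correct and complete. At most $q$ of the packed vertices can lie outside $N^d(X)$, since they carry distinct colours. Each ball $N^d(x)$ with $x \in X$ contains at most one of them, by the triangle inequality, so there are at most $k+q$ in total. The paper states this observation without proof, and your pigeonhole argument is exactly the natural reasoning it leaves implicit.
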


Hence, if $d$ is odd, the second outcome of \zcref{thm:colouredverticesatdistanceEP} implies that there is no packing of $\mathcal{O}(k^2f(2d))$ vertices of distinct colours and with pairwise distance $d$.

We can now obtain \zcref{thm:facesatdistance}, which we restate here, with little additional effort.
% \THin{This statement does not match the statement in the Abstract and Introduction. Also, should the bound be $(k+1)^2-1$ instead of $k(k+1)$?}
% \MGin{I fixed the $\mathcal{O}(k^2dg)$ thing in the introduction. So that now fits the statement here. You were also right about the colour-bound having to be $(k+1)^2-1$. I fixed this in the proof here and both the statement here and in the intro. Then I adjusted the abstract accordingly.}

\begin{reptheorem}{thm:facesatdistance}
    Let $k,d \in \mathbb{N}^+$ and $g \in \mathbb{N}$.
    Let $G$ be a graph with a 2-cell embedding $\psi$ on a surface $\Sigma$ of Euler genus $g$, let $c$ be a colouring of the faces $\mathcal{F}(\psi)$ of $\psi$, and let $R(G,\psi)$ be the radial graph of $(G, \psi)$.

    Then there exist $k$ faces $F_1, \ldots , F_k \in \mathcal{F}(\psi)$ that are given pairwise distinct colours by $c$ and are pairwise at distance at least $d$ in $\psi$, or there exists a set $X \subseteq V(G)$ of order at most $\mathcal{O}(k^2d(g+1))$ such that $|\{ c(F) \mid F \in \mathcal{F}(\psi) \text{ and } V(F) \cap \bigcup_{x \in X} N^d_{R(G,\psi)}(x) = \emptyset \}| \leq k(k+2)$.

    Moreover, there exists an algorithm running in time $2^{\mathcal{O}(d^2(g+1)^2)}\mathsf{poly}(k)|V(G)|^4$ that finds either the faces $F_1, \ldots, F_k$ or the set $X$ with the properties promised above.
\end{reptheorem}
\begin{proof}
    We apply \zcref{thm:colouredverticesatdistanceEP} to the radial graph $R(G,\psi)$ and the colouring $c'$ where for each vertex $v_F$ coming from the face $F \in \mathcal{F}(\psi)$ we set $c'(v_F) = c(F)$, and for every other vertex in $V(G)$ we give them all the same new colour. Because $R(G,\psi)$ embeds on a surface of genus $g$, \zcref{lem:bndgenusbndradiushasbndtw} shows that $R(G,\psi)$ has local treewidth at most $\mathcal{O}(d(g+1))$.
    
    If we have a packing of $k+1$ vertices in $R(G,\psi)$ each given distinct colours by $c'$ and pairwise at distance at least $2d$, then at least $k$ of them correspond to faces. For any two faces $F, F'$, the distance between $v_F$ and $v_{F'}$ in $R(G,\psi)$ is at most twice the distance between $F$ and $F'$ in $\psi$. Hence, we obtain the desired packing of $k$ faces.

    If instead, \zcref{thm:colouredverticesatdistanceEP} returns a set $X$ of order in $\mathcal{O}(k^2d(g+1))$, such that there are at most $(k+1)^2-1$ colours outside $N^d_{R(G,\psi)}(X)$, then we modify $X$ to get our desired set as follows. For every vertex $v_F \in X$ corresponding to some face $F$, we replace $v_F$ by an arbitrary vertex in $V(F)$. Because this vertex is adjacent to $v_F$, we see that $N^{d+1}_{R(G,\psi)}(X)$ now covers all but $(k+1)^2 - 1$ colours. However if a vertex $v_F$ is in $N^{d+1}_{R(G,\psi)}(X)$, then $V(F) \cap N^d_{R(G,\psi)}(X) \neq \emptyset$. Hence, this modified $X$ is the desired set.

    Clearly, the algorithm given by \zcref{thm:colouredverticesatdistanceEP} can be used to find the desired faces or set of vertices $X$ as desired.
\end{proof}

Note that applying the above with $d=1$, meaning that we want vertex-disjoint faces, yields a hitting set of balls of radius 1 instead of single vertices. However, we can easily rectify this by a slight modification of the proof. If $X$ is the set returned by \zcref{thm:colouredverticesatdistanceEP}, then for every vertex $x \in X$, if $x = v_F$ for some face $F$, then $N^1(x)$ only covers that singular face vertex, so we may replace $x$ by an arbitrary vertex of $V(F)$. If instead $x \in V(G)$, then $x$ only covers $v_F$ for $F$ such that $x \in V(F)$. Hence, this single vertex is enough to hit all the corresponding faces. This gives the following corollary.

\begin{corollary}
    Let $k \in \mathbb{N}^+$ and $g \in \mathbb{N}$.
    Let $G$ be a graph with a 2-cell embedding $\psi$ on a surface $\Sigma$ of Euler genus $g$ and let $c$ be a colouring of the faces $\mathcal{F}(\psi)$ of $\psi$.

    Then there exist $k$ faces $F_1, \ldots, F_k \in \mathcal{F}(\psi)$ given pairwise distinct colours by $c$ and such that their facial cycles are pairwise disjoint, or there exists a set $X \subseteq V(G)$ of order at most $\mathcal{O}(k^2(g+1))$ such that $|\{ c(F) \mid F \in \mathcal{F} \text{ and } V(F) \cap X = \emptyset \}| \leq k(k+2)$.

    Moreover, there exists an algorithm running in time $2^{\mathcal{O}((g+1)^2)}\mathsf{poly}(k)|V(G)|^4$ that finds either the faces $F_1, \ldots, F_k$ or the set $X$ with the properties promised above.
\end{corollary}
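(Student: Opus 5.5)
The plan is to follow the remark preceding the corollary: we specialise the argument for \zcref{thm:facesatdistance} to $d=1$ and then post-process the hitting set so that it consists of single vertices of $G$ rather than balls.

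Concretely, we apply \zcref{thm:colouredverticesatdistanceEP} to the radial graph $R(G,\psi)$ with distance parameter $2$. The colouring is $c'(v_F)=c(F)$ for face vertices, and a single fresh colour on all vertices of $V(G)$. By \zcref{lem:bndgenusbndradiushasbndtw}, balls of radius $4$ in $R(G,\psi)$ have treewidth $\mathcal{O}(g+1)$, since $R(G,\psi)$ embeds in $\Sigma$. So $f(4)\in\mathcal{O}(g+1)$, which gives the claimed running time $2^{\mathcal{O}((g+1)^2)}\mathsf{poly}(k)|V(G)|^4$.

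\textbf{Packing outcome.} Ask for $k+1$ vertices. At most one of them carries the fresh colour, so at least $k$ are face vertices $v_{F_1},\dots,v_{F_k}$ with pairwise distinct colours and pairwise distance at least $2$ in $R(G,\psi)$. Two face vertices are never adjacent, so this alone is not enough. I would strengthen the request to distance at least $3$ (applying the theorem with $d=3$). If $V(F_i)\cap V(F_j)\neq\emptyset$, then $v_{F_i}$ and $v_{F_j}$ have a common neighbour and lie at distance $2$. Hence distance at least $3$ forces the facial cycles to be disjoint. This changes only constants, since $f(6)\in\mathcal{O}(g+1)$ still holds.

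\textbf{Covering outcome.} We get $X$ with $|X|\le (k^2+k)(f(6)+1)\in\mathcal{O}(k^2(g+1))$ such that at most $(k+1)^2-1=k(k+2)$ colours appear outside $N^{\lfloor 3/2\rfloor}(X)=N^1(X)$. The fresh colour may be one of them, which only helps. The key point is that $N^1(X)$ must be converted into a vertex set of $G$:
\begin{itemize}
\item If $x=v_F$, then $N^1(x)$ contains exactly one face vertex, namely $v_F$ itself, since face vertices are pairwise non-adjacent. We replace $x$ by any vertex of $V(F)$, which still hits $F$.
\item If $x\in V(G)$, the face vertices in $N^1(x)$ are exactly the $v_F$ with $x\in V(F)$. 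So $x$ itself meets every face it covered.
\end{itemize}
The resulting $X'\subseteq V(G)$ has $|X'|\le|X|$. Every face $F$ with $V(F)\cap X'=\emptyset$ has $v_F\notin N^1(X)$, so at most $k(k+2)$ colours remain.

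\textbf{Main obstacle.} The delicate step is matching the distance parameter to vertex-disjointness of faces. Distance $2$ in the radial graph does not exclude shared vertices, so we must pass to parameter $3$ while keeping the ball radius $\lfloor 3/2\rfloor=1$, so that the replacement trick above still works. I expect only constants to change.
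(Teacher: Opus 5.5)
Your argument is correct and is essentially the paper's: apply \zcref{thm:colouredverticesatdistanceEP} to the radial graph, using one extra colour and $k+1$ in place of $k$, then replace each face vertex $v_F\in X$ by a vertex of $V(F)$ and keep the vertices of $G$. Your choice of distance parameter $3$, with radius $\lfloor 3/2\rfloor=1$, is in fact the right one: the paper's specialisation ``$d=1$'' of \zcref{thm:facesatdistance} runs the vertex theorem with parameter $2d=2$, which every pair of distinct face vertices already satisfies, so only your version actually forces the facial boundaries to be vertex-disjoint.
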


Finally, we remark that in the proof of \zcref{thm:colouredverticesatdistanceEP}, we use that the neighbourhood around a vertex has bounded treewidth. But the neighbourhood around a facial cycle also has bounded treewidth, as for any facial cycle $F$, we have $N_G^d(V(F)) \subseteq N^d_{R(G,\psi)}(v_F)$. Thus, by following the same proof as for~\zcref{thm:colouredverticesatdistanceEP}, we may also obtain the following result.

\begin{theorem}
    Let $k,d \in \mathbb{N}^+$ and $g \in \mathbb{N}$.
    Let $G$ be a graph with a 2-cell embedding $\psi$ on a surface~$\Sigma$ of Euler genus $g$, and let $c$ be a colouring of the faces $\mathcal{F}(\psi)$ of $\psi$.

    Then there exist $k$ faces $F_1, \ldots, F_k \in \mathcal{F}(\psi)$ that are given pairwise distinct colours by $c$ and their facial cycles are pairwise at distance at least $d$ in $G$, or there exists a set $X \subseteq V(G)$ of order at most $\mathcal{O}(k^2d(g+1))$ such that $|\{ c(F) \mid F \in \mathcal{F}(\psi) \text{ and } V(F) \cap N^d_G(X) = \emptyset \}| \leq \mathcal{O}(k^2)$.
\end{theorem}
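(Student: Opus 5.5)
We follow the proof of \zcref{thm:colouredverticesatdistanceEP}, working with faces and distance in $G$ between facial cycles in place of vertices of $R(G,\psi)$. Each face $F$ is represented by its vertex set $V(F)$, and we say two faces are \emph{far} if $\mathrm{dist}_G(V(F),V(F'))\geq d$.

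\textbf{Greedy phase.} Greedily choose faces $F_1,\dots,F_b$ that have pairwise distinct colours and are pairwise far. If $b$ reaches $k$, we are done. Otherwise $b\leq k-1$, and every face $F$ either has a colour in $\{c(F_i)\}$ or satisfies $\mathrm{dist}_G(V(F),V(F_i))<d$ for some $i$. In the latter case $V(F)\subseteq N^{d+\mathrm{len}}$, which is unbounded, so instead we localise around the face vertex. Any such $F$ has $v_F\in N^{d+1}_{R(G,\psi)}(v_{F_i})$: a path in $G$ of length less than $d$ joins $V(F_i)$ to $V(F)$, and we prepend and append the radial edges.

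\textbf{Local phase.} Set $H_i\coloneqq R(G,\psi)[N^{2d+2}_{R(G,\psi)}(v_{F_i})]$. This graph has radius $\mathcal{O}(d)$ and embeds in $\Sigma$, so by \zcref{lem:bndgenusbndradiushasbndtw} it has treewidth $\ell=\mathcal{O}(d(g+1))$. Within $H_i$ we need an analogue of \zcref{lem:bndtwcolouredverticesatdistanceEP} for the connected subgraphs $S_F\coloneqq G[N^{\lfloor d/2\rfloor}_G(V(F))]$, restricted to those $S_F$ lying inside $H_i$. The key identity is that two faces are far exactly when $S_F$ and $S_{F'}$ are vertex-disjoint, up to parity (use radius $\lceil d/2\rceil-1$ to get exact disjointness equivalence). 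Colour $S_F$ by $c(F)$ when $v_F\in N^{d+1}(v_{F_i})$, and give every other $S_F$ a single fixed new colour.

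Each $S_F$ is connected, so its bags in a tree-decomposition of $H_i$ form a subtree. Applying \zcref{lem:rainbowtreepacking} exactly as in \zcref{lem:lowtwdistinctlengthep} yields one of two outcomes:
\begin{itemize}
\item $k+1$ disjoint subgraphs of distinct colours, of which at least $k$ come from genuine faces. Disjointness inside $H_i$ implies disjointness in $G$, since all vertices involved lie in $H_i$, so these $k$ faces are pairwise far and we are done.
\item A set $X_i$ of at most $k(\ell+1)$ vertices hitting every $S_F$ except those in at most $k$ colour classes. We may replace each radial vertex in $X_i$ by a neighbour in $G$, as in the proof of \zcref{thm:facesatdistance}, at the cost of one extra unit of radius.
\end{itemize}

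\textbf{Combining.} Take $X=\bigcup_i X_i$, so $|X|\leq(k-1)k(\ell+1)=\mathcal{O}(k^2d(g+1))$. A face $F$ with $V(F)\cap N^d_G(X)=\emptyset$ has $S_F$ missing $X$. Hence either its colour is one of the $c(F_i)$, or $F$ is close to some $F_i$, so that $S_F$ is coloured by $c(F)$ in $H_i$ and $c(F)$ is one of at most $k$ surviving colours. This gives at most $(k-1)+k(k-1)=\mathcal{O}(k^2)$ colours, as claimed.

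\textbf{Main obstacle.} The delicate step is ensuring $S_F\subseteq H_i$ for every face close to $F_i$. Facial cycles can be long, so $V(F)$ need not lie in a bounded ball around $v_F$ in $G$. In $R(G,\psi)$, however, $V(F)\subseteq N^1(v_F)$, and therefore $S_F\subseteq N^{\lfloor d/2\rfloor+1}_{R(G,\psi)}(v_F)\subseteq N^{2d+2}_{R(G,\psi)}(v_{F_i})$. This is exactly the observation $N^d_G(V(F))\subseteq N^d_{R(G,\psi)}(v_F)$ made before the statement. We must check that distances measured in $G$ between vertices of $H_i$ are certified by disjointness of the $S_F$ rather than by paths within $H_i$; this holds because disjointness of balls in $G$ is a property of $G$ itself.
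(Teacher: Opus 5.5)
Your proposal is correct and is essentially the argument the paper intends. The paper's proof is only the remark that one reruns the proof of \zcref{thm:colouredverticesatdistanceEP}, using that neighbourhoods of facial cycles lie in bounded-radius balls around $v_F$ in $R(G,\psi)$; you fill this in with the greedy packing, the local graphs $H_i$ around $v_{F_i}$, \zcref{lem:rainbowtreepacking} applied to the connected subgraphs $S_F$, and the union of the $X_i$.

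One small slip: your parenthetical about radius $\lceil d/2\rceil-1$ is wrong for even $d$, where disjointness then only certifies distance at least $d-1$. It is also unnecessary, since you only use ``disjoint implies far'', and your original radius $\lfloor d/2\rfloor$ already gives distance at least $2\lfloor d/2\rfloor+1\geq d$. Likewise, radial vertices in $X_i$ never meet any $S_F\subseteq G$, so they can simply be discarded at no cost in radius.
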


\section{Subdivided ladders with few cycle lengths} \label{sec:bounds}
Recall that the \emph{$k$-ladder} is the Cartesian product of $K_2$ and a path with $k$ vertices.  Note that a $k$-ladder is a subgraph of a $k$-prism.  However, unlike prisms, a $k$-ladder is a subgraph of an $\ell$-ladder for every $\ell \geq k$.  Therefore, a natural approach to improve the bound in our main theorem is to use $k$-ladders instead of $k$-prisms in \zcref{cor:hightwcyclepacking}. 

\begin{question}\label{con:manydistinctcyclesinladders2}
    Does every subdivision of the $k$-ladder contain $\Omega(k)$ (not necessarily vertex-disjoint) cycles with distinct lengths?
\end{question}

% \MSin{I agree, but do we want to give a one-sentence explanation here? (I don't insist.)}
% \THin{I added a one-sentence explanation. I think the explanation fits better below~\zcref{con:con:manydistinctcyclesinladders1}}
Observe that the Erd\H{o}s-Szekeres theorem implies a bound of $\Omega(\sqrt{k})$ in~\zcref{con:manydistinctcyclesinladders2}. 
By partitioning a $k^2$-ladder into $k$ disjoint $k$-ladders, we also note that a positive answer to \zcref{con:manydistinctcyclesinladders2} implies a positive answer to the following.

\begin{question} \label{con:con:manydistinctcyclesinladders1}
    Does every subdivision of the $k^2$-ladder contain $\Omega(k)$ vertex-disjoint cycles with distinct lengths?  
\end{question}

Because every graph with treewidth $\Omega(k^4)$ contains a subdivided $k^2$-ladder, a positive answer to \zcref{con:con:manydistinctcyclesinladders1} would improve the bound on $|X|$ in~\zcref{thm:main} from $\mathcal{O}(k^6\mathsf{polylog}(k))$ to $\mathcal{O}(k^5)$. Unfortunately, we now show that both~\zcref{con:con:manydistinctcyclesinladders1,con:manydistinctcyclesinladders2} have negative answers.
The counterexample utilises the following result in additive combinatorics.
For a~finite set $A \subseteq \mathbb{Z}$, define the \emph{sum set} $A + A \coloneqq \{a + b \mid a, b \in A\}$ and the \emph{difference set} $A - A \coloneqq \{a - b \mid a, b \in A\}$.
A~construction by Hennecart, Robert, and Yudin \cite{HennecartRY99} yields sets $A$ that have a significantly larger difference set than sum set.
In the following statements, we fix a~constant $C \coloneqq \frac{\log 2}{\log (1 + \sqrt{2})} < 0.7865$.

\begin{theorem}[Hennecart, Robert, and Yudin \cite{HennecartRY99}]
    \label{thm:small-sumsets}
    For every $n \in \mathbb{N}$ and $\alpha > C$, there exists a~set $A \subseteq \mathbb{Z}$ such that $|A| \geq n$ and $|A + A| \leq |A - A|^\alpha$.
\end{theorem}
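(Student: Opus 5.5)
The plan is to follow the strategy of Hennecart, Robert, and Yudin. We first build a set in a high-dimensional lattice $\mathbb{Z}^D$ whose sum set is much smaller than its difference set. We then transfer it to $\mathbb{Z}$ by a Freiman isomorphism of order $2$. Concretely, for integers $D, r \geq 1$ let
\[ S_{D,r} \coloneqq \{ x \in \mathbb{Z}_{\geq 0}^D \mid x_1 + \dots + x_D \leq r \}, \]
which is the set of lattice points of a scaled simplex. The simplex is the natural choice because it is as far from centrally symmetric as possible. For a symmetric body such as the cube or the cross-polytope, $A + A$ and $A - A$ coincide up to translation, so no gain is possible.

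The first step is to compute both sets exactly. Since the simplex is convex and its lattice points form a Hilbert-basis-like set, $S_{D,r} + S_{D,r} = S_{D,2r}$. Hence $|A+A| = \binom{D+2r}{D}$. For the difference set, $S_{D,r} - S_{D,r}$ is exactly the set of $z \in \mathbb{Z}^D$ whose positive part and negative part each have coordinate sum at most $r$. This set contains the cross-polytope points $\{z \mid \sum_i |z_i| \leq r\}$, whose number is the Delannoy number $\sum_j \binom{D}{j}\binom{r}{j}2^j$. A lower bound for $|A - A|$ suffices here, although the exact count is also available as a sum of products of binomials.

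The second step, which I expect to be the main obstacle, is the asymptotic optimisation. Take $r = \lambda D$ with $\lambda > 0$ fixed and let $D \to \infty$. Stirling's formula gives exponential growth rates $|A+A| = e^{D\,\sigma(\lambda) + o(D)}$ and $|A-A| \geq e^{D\,\delta(\lambda) + o(D)}$. Here $\sigma$ is an entropy expression, while $\delta$ comes from maximising the summand $\binom{D}{j}\binom{r}{j}2^j$ over $j = \theta D$. It remains to show that $\inf_\lambda \sigma(\lambda)/\delta(\lambda) = \log 2/\log(1+\sqrt{2}) = C$; possibly the infimum is only attained as $\lambda \to 0$ or at a boundary, with the $o(D)$ terms handled carefully. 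As a guide, numerator $\log 4$ and denominator $\log(3+2\sqrt{2})$ suggest that in the extremal regime $|A+A| \approx 4^{m}$ while $|A-A| \approx (3+2\sqrt2)^{m}$, and $(3+2\sqrt2)^m$ is exactly the growth of the central Delannoy numbers $D(m,m)$. I would first try to verify this ratio directly in the balanced regime. If the simplex does not quite reach $C$, I would switch to a product of a cube and a simplex, or a truncated simplex, and redo the computation. Once some $\lambda$ gives $\sigma/\delta < \alpha$, taking $D$ large makes $|A + A| \leq |A - A|^\alpha$ and $|A| = \binom{D+r}{D} \geq n$.

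Finally, we transfer to $\mathbb{Z}$ via $\phi(x) = \sum_i x_i M^{i-1}$ with $M > 4r+1$. Every coordinate of an element of $A \pm A$ lies in $[-2r, 2r]$, so $\phi$ is injective on $A$, $A + A$ and $A - A$. Moreover, $\phi(a)+\phi(b) = \phi(a')+\phi(b')$ if and only if $a+b = a'+b'$, and likewise for differences. Hence $|\phi(A)+\phi(A)| = |A+A|$ and $|\phi(A)-\phi(A)| = |A-A|$, and $\phi(A) \subseteq \mathbb{Z}$ is the required set.
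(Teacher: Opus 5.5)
The paper does not prove this statement (it only quotes Hennecart, Robert and Yudin), so your outline has to stand on its own. It has a genuine gap exactly at the step you flag as the main obstacle. Several ingredients are fine: the choice $A=S_{D,r}$, the identities $A+A=S_{D,2r}$ and $A-A=\{z : \sum_i z_i^+\le r,\ \sum_i z_i^-\le r\}$, and the Freiman transfer via $\phi$.

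The problem is the lower bound you commit to for $|A-A|$, namely the number of lattice points in the cross-polytope $\{z : \sum_i |z_i|\le r\}$. This bound can never work. Send each coordinate $z_i\ge 0$ to $2z_i$ and each $z_i<0$ to $2|z_i|-1$. This injects the cross-polytope into $S_{D,2r}=A+A$, because parity records the sign and the coordinate sum is at most $2r$.

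Hence $\sum_j\binom{D}{j}\binom{r}{j}2^j\le |A+A|$ for all $D,r$. Your rate $\delta$ therefore satisfies $\delta(\lambda)\le\sigma(\lambda)$ for every $\lambda$. So this bound can never certify $|A+A|\le |A-A|^{\alpha}$ for any $\alpha<1$, let alone for $\alpha$ near $C$. For instance, $D=4$, $r=2$ gives $|A+A|=70$ and a cross-polytope count of $41$, whereas in fact $|A-A|=131$.

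The central-Delannoy heuristic misleads in the same way. The regime where $|A+A|\approx 4^m$ is $D=2r=m$. There $|A-A|$ does grow like $(3+2\sqrt2)^m$, but not because of the cross-polytope of radius $r$, which has only about $3.33^m$ points.

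The missing idea is to use the two constraints defining $A-A$ separately. You should count vectors whose positive part and negative part each have sum at most $r$, not vectors whose total $\ell_1$-norm is at most $r$. The number of vectors with exactly $j$ positive and $k$ negative coordinates is $\binom{D}{j}\binom{D-j}{k}\binom{r}{j}\binom{r}{k}$.

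Take $D=2r$ and $j=k=\lfloor ar\rfloor$ with $a=2-\sqrt2$. By Stirling, this single term is $e^{r\theta-o(r)}$, where $\theta=2a\log 2-4a\log a-4(1-a)\log(1-a)=4\log(1+\sqrt2)$. Meanwhile $|A+A|=\binom{4r}{2r}\le 16^r$.

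Hence $\log|A+A|/\log|A-A|\le 4r\log 2/(4r\log(1+\sqrt2)-o(r))\to C$. So for every $\alpha>C$ and all large $r$ you get $|A+A|\le|A-A|^\alpha$, with $|A|=\binom{3r}{r}\ge n$. No switch to a cube-times-simplex is needed, and your Freiman isomorphism then completes the argument.
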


We use~\zcref{thm:small-sumsets} to create a~finite point set $P \subseteq \mathbb{Z}^2$ in the two-dimensional plane with specific properties.
We shall say that $P$ is a~\emph{constellation} if all points in $P$ have pairwise different $x$-coordinates, namely, $x_1 \neq x_2$ for all distinct $(x_1, y_1), (x_2, y_2) \in P$.
For two points $p_1 = (x_1, y_1)$, $p_2 = (x_2, y_2)$, define $d(p_1, p_2) = y_1 + y_2 + |x_1 - x_2|$.
The \emph{variety} of a~constellation $P$ is the number of different values of $d(p_1, p_2)$ for distinct $p_1, p_2 \in P$, i.e.,
\[
    |\{
        d(p_1, p_2) \mid p_1, p_2 \in P,\, p_1 \neq p_2
    \}|.
\]
We now show that the variety of a~constellation can be significantly smaller than its cardinality.

\begin{lemma}
    \label{lem:constellation-with-small-variety}
    For every $n \in \mathbb{N}$ and $\alpha > C$, there exists a~constellation $P \subseteq \mathbb{Z}^2$ such that $|P| \geq n$ and the variety of $P$ is at most $|P|^\alpha$.
\end{lemma}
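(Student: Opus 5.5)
We need a constellation $P$ whose distances $d(p_1,p_2)=y_1+y_2+|x_1-x_2|$ take few values. The plan is to take $A$ from \zcref{thm:small-sumsets} and choose points so that each distance is a sum of two elements of a set with small sumset. For a pair with $x_1<x_2$ we have $d = (y_1 - x_1) + (y_2 + x_2)$. So the aim is a configuration in which the values $y-x$ and $y+x$ both lie in a common set $A$ (up to translation). Then every distance lies in $A+A$, and the variety is at most $|A+A|$.

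Concretely, take $A$ with $|A|\ge n$ and $|A+A|\le |A-A|^{\alpha'}$ for some $C<\alpha'<\alpha$. For each pair $(a,b)\in A\times A$ put $x=(b-a)/2$ and $y=(a+b)/2$, so that $y-x=a$ and $y+x=b$. To avoid halves, first replace $A$ by $2A$, or equivalently use the point $(b-a,\,a+b)$ and note that all distances double. The $x$-coordinates $b-a$ are not distinct, however: each difference is realized by many pairs. We fix this by choosing, for each $t\in A-A$, a single pair $(a_t,b_t)$ with $b_t-a_t=t$, and setting $P=\{(t,\,a_t+b_t): t\in A-A\}$. This is a constellation with $|P|=|A-A|\ge |A|\ge n$.

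For points indexed by $s<t$ we compute
\[
d = (a_s+b_s)+(a_t+b_t)+(t-s) = 2a_s + 2b_t ,
\]
using $b_s-s=a_s$ and $a_t+t=b_t$. Hence every distance lies in $2(A+A)$, and the variety is at most $|A+A|\le |A-A|^{\alpha'}=|P|^{\alpha'}\le |P|^{\alpha}$.

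The key step, and the only one requiring care, is choosing the coordinates so that the absolute value $|x_1-x_2|$ splits into one term depending only on the left point and one depending only on the right point, both landing in $A$. The linear substitution $x=b-a$, $y=a+b$ achieves this. The slack between $\alpha'$ and $\alpha$ is not actually needed here, since taking $\alpha'=\alpha$ already gives the bound. One should also check that \zcref{thm:small-sumsets} yields arbitrarily large $A$, so that $|P|\ge n$; it does, because $|A-A|\ge|A|$.
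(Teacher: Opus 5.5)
Your proposal is correct and follows essentially the same route as the paper. The paper also takes the points $(a-b,a+b)$ for $a,b\in A$ and keeps one point per $x$-coordinate, so that $|P|=|A-A|$. It then observes that for $x_1<x_2$ one has $d(p_1,p_2)=(y_1-x_1)+(y_2+x_2)\in 2(A+A)$, which bounds the variety by $|A+A|\le|A-A|^{\alpha}=|P|^{\alpha}$, directly with $\alpha$ and no auxiliary $\alpha'$.
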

\begin{proof}
    By~\zcref{thm:small-sumsets}, there exists a~finite $A \subseteq \mathbb{Z}$ with $|A| \geq n$ and $|A + A| \leq |A - A|^\alpha$.
    We construct the point set $Q \subseteq \mathbb{Z}^2$:
    \[
        Q \coloneqq \{ (a - b, a + b) \mid a, b \in A \}.
    \]
    Note that a~point $(x, y) = (a - b, a + b)$ is formed by the intersection of the lines $x + y = 2a$ and $y - x = 2b$ in the Euclidean plane.
    Hence, $Q$ is a~``grid'' consisting of $n^2$ points, formed by the intersections of all the lines of the form $\{x + y = 2a \mid a \in A\}$ with all the lines of the form $\{y - x = 2b \mid b \in A\}$.
    We choose $P$ to be a maximal subset of $Q$ with pairwise distinct $x$-coordinates.

    We now claim that $P$ satisfies the conditions of the statement of the lemma.
    Observe that $|P| = |A - A| \geq |A| \geq n$ since the set of $x$-coordinates of points from $P$ is precisely $A - A$.
    Thus, it remains to estimate the variety of $P$.
    Choose any pair of distinct points $p_1 = (x_1, y_1), p_2 = (x_2, y_2) \in P$; and let $a_i, b_i \in A$ for $i \in [2]$ be such that $p_i$ is the intersection of the lines $x + y = 2a_i$ and $y - x = 2b_i$ in the plane.
    Without loss of generality, assume that $x_1 < x_2$.
    Then
    \[ d(p_1, p_2) = (y_1 - x_1) + (x_2 + y_2) = 2(b_1 + a_2) \in 2(A + A), \]
    where $2(A + A) = \{2t \mid t \in A + A\}$.
    Therefore the variety of $P$ is upper-bounded by $|2(A + A)| = |A + A| \leq |A - A|^\alpha = |P|^\alpha$, as required.
\end{proof}

To prove~\zcref{lem:ladder-subdivision-with-few-cycles}, we now translate the constellation of~\zcref{lem:constellation-with-small-variety} into a~subdivision of a~ladder with few cycle lengths.

\begin{reptheorem}{lem:ladder-subdivision-with-few-cycles}
    For every $n \in \mathbb{N}$ and $\alpha > C$, there exists $k \geq n$ and a~subdivision $G$ of the $k$-ladder such that $|\Lambda(G)| \leq k^\alpha$.
\end{reptheorem}
\begin{proof}
    Pick $P = \{p_1, \ldots, p_k\} \subseteq \mathbb{Z}^2$ as a~constellation of cardinality $k \geq n$ and variety at most $k^\alpha$, as guaranteed by~\zcref{lem:constellation-with-small-variety}.
    Let $p_i = (x_i, y_i)$ for each $i \in [k]$, assume that $x_1 < \ldots < x_k$, and let $s < \min \{y_1, \ldots, y_k\}$ be an~integer.
    Let also $u_1, \ldots, u_k, v_1, \ldots, v_k$ be the vertices of the $k$-ladder, with edges of the form $u_i u_{i+1}$, $v_i v_{i+1}$ for $i \in [k-1]$, and $u_i v_i$ for $i \in [k]$.
    In order to construct $G$, it is enough to assign a~positive integer weight to each edge of the ladder and define $G$ by replacing each edge with a~path of length equal to the weight of the edge.
    In our construction:
    \begin{itemize}
        \item the edges $u_i u_{i+1}$ and $v_i v_{i+1}$ are assigned weights $x_{i+1} - x_i$, and
        \item the edge $u_i v_i$ is assigned weight $2(y_i - s)$.
    \end{itemize}
    It remains to analyse the lengths of the cycles of $G$.
    Let $1 \leq i < j \leq k$ and consider the cycle of $G$ that is a~subdivision of the cycle $u_i \ldots u_j v_j \ldots v_i$ of the $k$-ladder.
    Its length is precisely
    \[
        2(x_j - x_i) + 2(y_i - s) + 2(y_j - s) = 2(y_i + y_j + |x_i - x_j|) - 4s = 2d(p_i, p_j) - 4s.
    \]
    Hence $\Lambda(G) = \{2d(p_i, p_j) - 4s \mid 1 \leq i < j \leq k\}$, and so $\Lambda(G)$ is precisely the variety of $P$, which has size at most $k^\alpha$ by~\zcref{lem:constellation-with-small-variety}.
\end{proof}

This refutes~\zcref{con:manydistinctcyclesinladders2}.
It remains to refute~\zcref{con:con:manydistinctcyclesinladders1}.
This follows immediately from the following statement.

\begin{theorem} \label{thm:question2}
    Let $D \coloneqq \frac{C}{1 + C} < 0.4403$.
    For every $m \in \mathbb{N}$ and $\beta > D$, there exists $\ell \geq m$ and a~subdivision of the $\ell$-ladder containing no family of $\ell^\beta$ vertex-disjoint cycles of pairwise different lengths.
\end{theorem}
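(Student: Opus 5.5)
The plan is to build the graph from $t$ copies of the subdivided ladder from \zcref{lem:ladder-subdivision-with-few-cycles}, placed in series, and to balance $t$ against the size of the cycle spectrum of a single copy.

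\textbf{Setup.} Fix $\beta > D$ and choose $\alpha > C$ with $\frac{\alpha}{1+\alpha} < \beta$; this is possible because $x \mapsto \frac{x}{1+x}$ is continuous and increasing, and $D = \frac{C}{1+C}$. By \zcref{lem:ladder-subdivision-with-few-cycles}, there is $k$, which we may take as large as we like, and a subdivision $G_0$ of the $k$-ladder with $|\Lambda(G_0)| \leq k^\alpha$. Set $t \coloneqq \lceil k^\alpha \rceil$. Take $t$ disjoint copies $G^{(1)}, \dots, G^{(t)}$ of $G_0$, whose branch vertices are $u^{(r)}_i, v^{(r)}_i$. For each $r \in [t-1]$, join consecutive copies by the edges $u^{(r)}_k u^{(r+1)}_1$ and $v^{(r)}_k v^{(r+1)}_1$, left unsubdivided or subdivided arbitrarily. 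The result $G$ is a subdivision of the $\ell$-ladder with $\ell \coloneqq kt \geq m$.

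\textbf{Structure of cycles.} Every cycle of a subdivided ladder is the subdivision of a cycle $u_i \dots u_j v_j \dots v_i$ for some rungs $i < j$. Its vertex set meets exactly the rungs in the interval $[i,j]$, so pairwise vertex-disjoint cycles correspond to pairwise disjoint rung intervals. Split any family of vertex-disjoint cycles of $G$ into two kinds.
\begin{itemize}
\item Cycles with both rungs in the same copy $G^{(r)}$ lie entirely inside $G^{(r)}$. Hence their lengths lie in $\Lambda(G_0)$, and among cycles of pairwise different lengths there are at most $k^\alpha$ of this kind.
\item Every other cycle contains one of the $t-1$ connecting edges $u^{(r)}_k u^{(r+1)}_1$. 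Vertex-disjoint cycles cannot share such an edge, so there are at most $t-1$ of this kind.
\end{itemize}

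\textbf{Conclusion.} Any family of vertex-disjoint cycles of pairwise different lengths therefore has fewer than $k^\alpha + t \leq 2k^\alpha + 1$ members. Since $\ell \geq k^{1+\alpha}$, this is at most $3\ell^{\alpha/(1+\alpha)}$. Because $\frac{\alpha}{1+\alpha} < \beta$, we have $3\ell^{\alpha/(1+\alpha)} < \ell^\beta$ once $k$, and hence $\ell$, is large enough. This proves the statement.

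The only delicate point is the claim that disjoint cycles occupy disjoint rung intervals and that cycles inside one copy have lengths exactly those in $\Lambda(G_0)$. Both hold because each copy is an induced sub-ladder of $G$: a cycle whose rungs are both in $G^{(r)}$ uses only the rails between those rungs, which lie inside $G^{(r)}$.
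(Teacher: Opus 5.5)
Your proof is correct and is essentially the paper's argument. Both proofs chain $\lceil k^\alpha\rceil$ copies of the few-cycle-lengths ladder from \zcref{lem:ladder-subdivision-with-few-cycles} into a subdivided $(k\lceil k^\alpha\rceil)$-ladder, bound the number of cycles inside copies by $|\Lambda(G_0)|\le k^\alpha$ and the number of crossing cycles by $t-1$, and conclude using $\ell\ge k^{1+\alpha}$ and $\frac{\alpha}{1+\alpha}<\beta$. The only difference is cosmetic: the paper bounds the crossing cycles by noting that each uses at least two of the $2(t-1)$ connecting edges, so it does not need your (correct) description of all cycles in a ladder.
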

\begin{proof}
    Let $\alpha > C$ be such that $\frac{\alpha}{1 + \alpha} < \beta$.
    For $n \in \mathbb{N}$, let $k_n \geq n$ and $G_n$ be a~subdivision of the $k_n$-ladder such that $|\Lambda(G_n)| \leq k_n^\alpha$, as provided by~\zcref{lem:ladder-subdivision-with-few-cycles}.
    Also, let $q_n \coloneqq \left\lceil k_n^\alpha \right\rceil$.
    We construct the graph $H_n$ by taking $q_n$ disjoint copies of $G_n$ and connecting them arbitrarily using $2(q_n - 1)$ additional edges to construct a~subdivision of the $(k_n q_n)$-ladder.
    Let $\ell_n \coloneqq k_n q_n$.

    We now show that $H_n$ contains no family of $2k_n^\alpha$ vertex-disjoint cycles of pairwise different lengths.
    Indeed, in any packing of cycles in $H_n$:
    \begin{itemize}
        \item at most $q_n - 1 < k_n^\alpha$ cycles contain vertices of more than one disjoint copy of $G_n$, since each such cycle utilises at least two edges connecting disjoint copies of $G_n$; and
        \item the disjoint copies of $G_n$ only contain cycles of lengths in $\Lambda(G_n)$, and $|\Lambda(G_n)| \leq k_n^\alpha$.
    \end{itemize}

    It remains to prove that for large enough $n \in \mathbb{N}$, we have $\ell_n \geq m$ and $2k_n^\alpha \leq \ell_n^\beta$.
    The former is obvious, and for the latter, we have
    \[
        \ell_n^\beta \geq 2 \ell_n^{\alpha / (1 + \alpha)} = 2(k_n q_n)^{\alpha / (1 + \alpha)} \geq 2(k_n \cdot k_n^\alpha)^{\alpha / (1 + \alpha)} = 2k_n^\alpha,
    \]
    where the first inequality holds for large enough $n$ (and so large enough $\ell_n$) by $\frac{\alpha}{1 + \alpha} < \beta$.
\end{proof}

We note that \zcref{thm:question2} also shows that there exist subdivided $k^2$-prisms with at most $\mathcal{O}(k^{0.8806})$ vertex-disjoint cycles of distinct lengths.

\medskip

\section{On possible extensions of our results}\label{sec:extensions}

\paragraph{Improved bounds.}  Given \zcref{lem:ladder-subdivision-with-few-cycles}, it appears quite challenging to improve our bounds without completely new ideas.  Nonetheless, the following conjecture could be true.  

\begin{conjecture} \label{wildconjecture}
    There exists a function $f(k) \in \mathcal{O}(k \log k)$ such that every graph $G$ contains $k$ vertex-disjoint cycles with distinct lengths, or a set $X$ of at most $f(k)$ vertices such that $|\Lambda(G-X)|
    \leq k-1$.
\end{conjecture}

By \zcref{lowerboundf, lowerboundg}, \zcref{wildconjecture} would be simultaneously optimal for both $|X|$ and $|\Lambda(G-X)|$.  Moreover, the transformation presented in the proof of \zcref{lowerboundf} shows that \zcref{wildconjecture} also implies the classic \EP{} theorem.  

\medskip

It is also natural to ask if our results can be extended to lengths which are not only distinct but also satisfy some additional nice properties.  This appears quite challenging given the following negative results. 

\paragraph{Consecutive lengths.} Does there exist a function $f(k)$ such that every graph $G$ contains $k$ vertex-disjoint cycles whose lengths are $k$ consecutive integers or a set $X$ of at most $f(k)$ vertices such that $\Lambda(G-X)$ does not contain $f(k)$ consecutive integers?  The following simple example shows that the answer is negative. Let $C_a$ denote the cycle of length $a$. Given $A \subseteq \mathbb{N}$, let $G_A=\bigcup_{a \in A} C_a$, where there exists a vertex $x \in V(G_A)$ such that $V(C_a) \cap V(C_b)=\{x\}$ for all distinct $a,b \in A$. Let $\mathcal{A}$ be a family of subsets of $\mathbb{N}$ such that each $A \in \mathcal{A}$ is a large subset of consecutive integers and $A \cup A'$ is not a set of consecutive integers for all distinct $A, A' \in \mathcal{A}$.  Let $G_{\mathcal{A}}$ be the disjoint union of $G_A$, over all $A \in \mathcal{A}$.  Note that $G_\mathcal{A}$ does not contain two vertex-disjoint cycles of consecutive lengths.  On the other hand if $|X| < |\mathcal{A}|$, then $\Lambda(G_{\mathcal{A}}-X)$ still contains long sequences of consecutive integers (since $X \cap V(G_A)=\emptyset$ for some $A \in \mathcal{A}$). This example actually shows that for all $t \in \mathbb{N}$, the $\nicefrac{1}{t}$-integral \EP{} property fails for consecutive cycle lengths. 

\paragraph{Arithmetic Progressions.}  What if instead of asking for consecutive lengths, we ask for lengths which form an arithmetic progression?  There are two natural ways to formalise this question, depending on whether we fix the common difference of the arithmetic progression or not.  To be precise, recall that a \emph{$d$-arithmetic progression} is a sequence of integers whose consecutive terms differ by $d$.  A sequence is an \emph{arithmetic progression} if it is a $d$-arithmetic progression for some $d$.  Fix $d \geq 1$. 
% \footnote{We exempt the trivial arithmetic progression in which all cycles would have the same length from the discussion here. In this case, the $\nicefrac{1}{t}$-integral \EP{} property holds for all positive integers $t$ for trivial reasons.} 
  Does there exist a function $f(k)$ such that every graph $G$ contains $k$ vertex-disjoint cycles whose lengths form a $d$-arithmetic progression or a set of at most $f(k)$ vertices $X$ such that $\Lambda(G-X)$ does not contain a $d$-arithmetic progression of size $f(k)$?  Alternatively, we can allow $d$ to vary. That is, does there exist a function $f(k)$ such that every graph $G$ contains $k$ vertex-disjoint cycles whose lengths form an arithmetic progression or a set of at most $f(k)$ vertices $X$ such that $\Lambda(G-X)$ does not contain any arithmetic progression of size $f(k)$?

A slight modification of the previous counterexample shows that the answer is negative for both questions. For the first question, this is straightforward by taking a $(d-1)$-subdivision of the previous counterexample. For the second question,  we say that a family $\mathcal{A}$ of subsets of $\mathbb{N}$ is
\emph{$(p,\ell)$-good} if each $A \in \mathcal{A}$ is a $p$-arithmetic progression of length $\ell$ and every arithmetic progression of length at least 3 in $\bigcup \mathcal{A}$ is a subset of $A$ for some $A \in \mathcal{A}$. We claim that for all $p,\ell \in \mathbb{N}$, there exist arbitrarily large $(p,\ell)$-good families. To see this, observe that if $\mathcal{A}$ is $(p,\ell)$-good, then $\mathcal{A} \cup \{A\}$ is $(p,\ell)$-good, where $A$ is a $p$-arithmetic progression of length $\ell$ whose smallest element is at least triple the maximum element of $\bigcup \mathcal{A}$.  Let $\mathcal{A}$ be a large $(p,\ell)$-good family, with $p$ arbitrary and $\ell$ large. Then the graph $G_{\mathcal{A}}$ (as defined above) does not contain three vertex-disjoint cycles whose lengths form an arithmetic progression, but $\Lambda(G_{\mathcal{A}}-X)$ contains an arithmetic progression of length $\ell$ for all $X$ with $|X| < |\mathcal{A}|$.

\paragraph{Prescribed lengths.}  Given a subset $L \subseteq \mathbb{N}$, do there exist functions $f_L : \mathbb{N} \to \mathbb{N}$ and $g_L : \mathbb{N} \to \mathbb{N}$ such that every graph $G$ contains $k$ vertex-disjoint cycles with distinct lengths from $L$ or a set $X$ of at most $f_L(k)$ vertices such that $|\Lambda(G-X) \cap L| \leq g_L(k)$?  In recent work, Gorsky, Hendrey, and Huynh~\cite{GorskyHH2026ErdosPosa} show that $f_L$ and $g_L$ do not exist if $L$ is the set of prime numbers.  More generally, they prove that $f_L$ and $g_L$ do not exist if $L$ is infinite and $\mathbb{N} \setminus L$ contains arbitrarily long sequences of consecutive integers.
\medskip

\paragraph{Directed graphs.} In a completely different direction, one can also ask if our results hold for digraphs. 
We discuss only the simplest version of this.
In particular, is it true that there exists a function $g(k)$ such that every digraph $D$ contains $k$ vertex-disjoint directed cycles of different lengths or a set $X$ of at most $g(k)$ vertices such that $D-X$ contains at most $g(k)$ different directed cycle lengths?
It turns out that this is false, as the following counterexample shows.

    Suppose there was such a function \(g(k)\).
    We construct a subdivision of a large cylindrical grid in which all cycles have the same length in the following way.
    We consider a planar embedding of a cylindrical grid such that all cycles are embedded counterclockwise around a point \(x\) in the plane and for every edge \(e\) there is a positive counterclockwise angle \(w(e) \in [360]\) between the straight line of its start-vertex to \(x\) and the straight line from its end-vertex to \(x\), see \zcref{fig:directed_counterexample}.

    \begin{figure}[!ht]
        \begin{center}
        	\begin{tikzpicture}
        		\pgfdeclarelayer{deepbackground}
        		\pgfdeclarelayer{background}
        		\pgfdeclarelayer{foreground}
        		\pgfsetlayers{deepbackground,background,main,foreground}
        		\usetikzlibrary{arrows}
        		\usetikzlibrary{decorations.markings}
        		\usetikzlibrary{decorations.pathreplacing}
        		\tikzstyle{vertex} = [draw, circle, inner sep=.6mm, thick,fill=black]
        		\tikzstyle{edge} = [draw=black,line width=1.3pt]
        		\tikzstyle{directededge} = [edge,
        		decoration={
        			markings,
        			mark=at position 0.85 with {\arrow{latex}}},postaction={decorate}]
        		\tikzstyle{directededgesimple} = [edge,->,-latex]

        		\def\dist{1.4}
        		\node[vertex,BostonUniversityRed] (center) at (0,0) {};
        		\node[BostonUniversityRed] (center-label) at ($(center)+(240:0.4)$) {\(x\)};
        		\node[vertex] (v-1-1-out) at ($(center)+(10:\dist)$) {};
        		\node[vertex] (v-2-1-out) at ($(center)+({20}:2*\dist)$) {};
        		\node[vertex] (v-3-1-out) at ($(center)+({30}:3*\dist)$) {};
        		\node[vertex] (v-4-1-out) at ($(center)+({40}:4*\dist)$) {};
        		
        		\node[vertex] (v-4-1-in) at ($(center)+(50:4*\dist)$) {};
        		\node[vertex] (v-3-1-in) at ($(center)+({60}:3*\dist)$) {};
        		\node[vertex] (v-2-1-in) at ($(center)+({70}:2*\dist)$) {};
        		\node[vertex] (v-1-1-in) at ($(center)+({80}:\dist)$) {};
        		
        		\node[vertex] (v-1-2-out) at ($(center)+(100:\dist)$) {};
        		\node[vertex] (v-2-2-out) at ($(center)+({110}:2*\dist)$) {};
        		\node[vertex] (v-3-2-out) at ($(center)+({120}:3*\dist)$) {};
        		\node[vertex] (v-4-2-out) at ($(center)+({130}:4*\dist)$) {};
        		
        		\node[vertex] (v-4-2-in) at ($(center)+(140:4*\dist)$) {};
        		\node[vertex] (v-3-2-in) at ($(center)+({150}:3*\dist)$) {};
        		\node[vertex] (v-2-2-in) at ($(center)+({160}:2*\dist)$) {};
        		\node[vertex] (v-1-2-in) at ($(center)+({170}:\dist)$) {};
        		
        		\node[vertex] (v-1-3-out) at ($(center)+(190:\dist)$) {};
        		\node[vertex] (v-2-3-out) at ($(center)+({200}:2*\dist)$) {};
        		\node[vertex] (v-3-3-out) at ($(center)+({210}:3*\dist)$) {};
        		\node[vertex] (v-4-3-out) at ($(center)+({220}:4*\dist)$) {};
        		
        		\node[vertex] (v-4-3-in) at ($(center)+(230:4*\dist)$) {};
        		\node[vertex] (v-3-3-in) at ($(center)+({240}:3*\dist)$) {};
        		\node[vertex] (v-2-3-in) at ($(center)+({250}:2*\dist)$) {};
        		\node[vertex] (v-1-3-in) at ($(center)+({260}:\dist)$) {};
        		
        		\node[vertex] (v-1-4-out) at ($(center)+(280:\dist)$) {};
        		\node[vertex] (v-2-4-out) at ($(center)+({290}:2*\dist)$) {};
        		\node[vertex] (v-3-4-out) at ($(center)+({300}:3*\dist)$) {};
        		\node[vertex] (v-4-4-out) at ($(center)+({310}:4*\dist)$) {};
        		
        		\node[vertex] (v-4-4-in) at ($(center)+(320:4*\dist)$) {};
        		\node[vertex] (v-3-4-in) at ($(center)+({330}:3*\dist)$) {};
        		\node[vertex] (v-2-4-in) at ($(center)+({340}:2*\dist)$) {};
        		\node[vertex] (v-1-4-in) at ($(center)+({350}:\dist)$) {};
        		
        		\begin{pgfonlayer}{background}
        			%cycles
        			\draw ($(v-1-1-out)$) edge[edge,quick curve through={($(v-1-1-in)$) ($(v-1-2-out)$) ($(v-1-2-in)$) ($(v-1-3-out)$) ($(v-1-3-in)$) ($(v-1-4-out)$)}] ($(v-1-4-in)$);
%        			\draw[directededge,out=86,in=274] ($(v-1-4-in)$) to ($(v-1-1-out)$);
					\draw[directededge] ($(v-1-4-in)$) arc (-10:10:\dist);
        			
        			\draw ($(v-2-1-out)$) edge[edge,quick curve through={($(v-2-1-in)$) ($(v-2-2-out)$) ($(v-2-2-in)$) ($(v-2-3-out)$) ($(v-2-3-in)$) ($(v-2-4-out)$)}] ($(v-2-4-in)$);
%        			\draw[directededge,out=70,in=280] ($(v-2-4-in)$) to ($(v-2-1-out)$);
        			\draw[directededge] ($(v-2-4-in)$) arc (-20:20:{2*\dist});
        			
        			\draw ($(v-3-1-out)$) edge[edge,quick curve through={($(v-3-1-in)$) ($(v-3-2-out)$) ($(v-3-2-in)$) ($(v-3-3-out)$) ($(v-3-3-in)$) ($(v-3-4-out)$)}] ($(v-3-4-in)$);
%        			\draw[directededge,out=88,in=272] ($(v-3-4-in)$) to ($(v-3-1-out)$);
        			\draw[directededge] ($(v-3-4-in)$) arc (-30:30:{3*\dist});
        			
        			\draw ($(v-4-1-out)$) edge[edge,quick curve through={($(v-4-1-in)$) ($(v-4-2-out)$) ($(v-4-2-in)$) ($(v-4-3-out)$) ($(v-4-3-in)$) ($(v-4-4-out)$)}] ($(v-4-4-in)$);
%        			\draw[directededge,out=88,in=272] ($(v-4-4-in)$) to ($(v-4-1-out)$);
					\draw[directededge] ($(v-4-4-in)$) arc (-40:40:{4*\dist});

					%e_1,e_2,e_3,e_4
					\draw[directededge,line width=2,BrightTurquoise] ($(v-4-1-out)$) arc (40:50:{4*\dist});
					\node[BrightTurquoise!80!black] (e-1-label) at ($(v-4-1-out)!0.5!(v-4-1-in)+({180+45}:0.4)$) {\(e_1\)};
					
					\draw[directededge,line width=2,BrightTurquoise] ($(v-4-2-out)$) arc (130:140:{4*\dist});
					\node[BrightTurquoise!80!black] (e-2-label) at ($(v-4-2-out)!0.5!(v-4-2-in)+({180+135}:0.4)$) {\(e_2\)};
					
					\draw[directededge,line width=2,BrightTurquoise] ($(v-4-3-out)$) arc (220:230:{4*\dist});
					\node[BrightTurquoise!80!black] (e-3-label) at ($(v-4-3-out)!0.5!(v-4-3-in)+({180+225}:0.4)$) {\(e_3\)};
					
					\draw[directededge,line width=2,BrightTurquoise] ($(v-4-4-out)$) arc (310:320:{4*\dist});
					\node[BrightTurquoise!80!black] (e-4-label) at ($(v-4-4-out)!0.5!(v-4-4-in)+({180+315}:0.4)$) {\(e_4\)};
					
					%detours
%					\draw[directededgesimple,BananaYellow,bend right,decorate,decoration={snake,pre length=1mm, post length=1mm, amplitude=2mm, segment length=5mm}] ($(v-4-1-out)$) to
					%[edge,quick curve through={($(v-4-1-out)!0.5!(v-4-1-in)+({45}:0.7)$)}]
%					 ($(v-4-1-in)$);
					 
					\draw[directededgesimple,AppleGreen,decorate,decoration={snake,pre length=2mm, post length=2mm, amplitude=1mm, segment length=7mm},shorten >=1mm] ($(v-4-1-out)$) arc (-90:180:{0.7});
					\node[AppleGreen] (detour-1-label) at ($(v-4-1-in)+({28}:2.2)$) {\(\underbrace{\max\{w(e_1),w(e_2),w(e_3),w(e_4)\}}_{M\coloneqq} + 1\)};
					
					\draw[directededgesimple,AppleGreen,decorate,decoration={snake,pre length=2mm, post length=2mm, amplitude=1mm, segment length=7mm},shorten >=1mm] ($(v-4-2-out)$) arc (0:270:{0.7});
					\node[AppleGreen] (detour-2-label) at ($(v-4-2-in)+({100}:2)$) {\(M + 2\)};
					
					\draw[directededgesimple,AppleGreen,decorate,decoration={snake,pre length=2mm, post length=2mm, amplitude=1mm, segment length=7mm},shorten >=1mm] ($(v-4-3-out)$) arc (90:360:{0.7});
					\node[AppleGreen] (detour-3-label) at ($(v-4-3-in)+({180}:2.2)$) {\(M + 3\)};
					
					\draw[directededgesimple,AppleGreen,decorate,decoration={snake,pre length=2mm, post length=2mm, amplitude=1mm, segment length=7mm},shorten >=1mm] ($(v-4-4-out)$) arc (-180:90:{0.7});
					\node[AppleGreen] (detour-3-label) at ($(v-4-4-in)+({315}:2)$) {\(M + 4\)};
					
					\node at ($(center)+(180:{5.5*\dist})$) {};
					
					%paths
					\foreach \x in {1,2,3,4}{
						\foreach \i [evaluate=\i as \j using {int(\i+1)}] in {1,2,3}{
								\draw[directededge] ($(v-\i-\x-out)$) to ($(v-\j-\x-out)$);
						}
					}
					\foreach \x in {1,3,4}{
						\foreach \i [evaluate=\i as \j using {int(\i+1)}] in {1,2,3}{
							\draw[directededge] ($(v-\j-\x-in)$) to ($(v-\i-\x-in)$);
						}
					}
					
					\foreach \i [evaluate=\i as \j using {int(\i+1)}] in {1,3}{
						\draw[directededge] ($(v-\j-2-in)$) to ($(v-\i-2-in)$);
					}
					
					\draw[directededge,violet] ($(v-3-2-in)$) to ($(v-2-2-in)$);
%					\draw[directededge] ($(v-1-1-out)$) to ($(v-2-1-out)$);
        		\end{pgfonlayer}
        		
        		\node[violet] (e-label) at ($(v-3-2-in)!0.5!(v-2-2-in)+(180:0.4)$) {\(e\)};
        		
        		\draw[thin,BrightPink] (center) to ($(center)!2!(v-2-2-in)$);
        		\draw[thin,BrightPink] (center) to ($(center)!1.33!(v-3-2-in)$);
        		
        		\draw[BrightPink,decoration={brace,raise=1ex},decorate] ($(center)!2!(v-2-2-in)$) --  ($(center)!1.33!(v-3-2-in)$);
        		
        		\node[BrightPink] (alpha) at ($($(center)!2!(v-2-2-in)$)!0.5!($(center)!1.33!(v-3-2-in)$)+(155:0.5)$) {\alpha};
        		
        		\node[violet] (weight) at ($(alpha)+(140:0.5)$) {\(w(e)=\)};
        	\end{tikzpicture}
        \end{center}
        \caption{Illustration of the counterexample to the existence of a function $g$ such that every digraph contains $k$ vertex-disjoint directed cycles of different lengths or a set $X$ of at most $g(k)$ vertices such that $D-X$ contains at most $g(k)$ different directed cycle lengths.}
        \label{fig:directed_counterexample}
    \end{figure}

    Now we subdivide every edge according to the angle we just described, making it a path of length \(w(e)\).
    This way, every cycle has length exactly \(360\).

    Now, we attach many paths to the outermost cycle of the grid of such that each path begins at the end-vertex of an out-path of the grid ends at the start-vertex of the next in-path. Moreover, the lengths of the newly attached paths are all distinct and larger than 360.  

    Now there are no three disjoint cycles of different lengths.
    However, for every set \(X\) of size \(g(k)\), \(G-X\) still contains many distinct cycle lengths.  

Since the most basic form of our type of question has a negative answer, it is natural to relax its conditions and ask whether a half-integral variant, in which cycles may share vertices to a limited extent, may hold.
Such half-integral versions of the \EP{} property for directed cycles often hold, despite the integral versions failing on very similar counterexamples (see \cite{GorskyKKW2024Packing,Gorsky2024Structure,KawarabayashiKKX2025Halfintegral}).

\begin{conjecture}\label{con:halfintegraldirectedcycles}
    For every $k \geq 1$, there exists an integer $g(k)$ such that every digraph $D$ contains a set $\mathcal{C}$ of $k$ directed cycles of different lengths, such that there does not exist a vertex $v \in V(D)$ that is contained in three distinct cycles of $\mathcal{C}$, or a set $X$ of at most $g(k)$ vertices such that $D-X$ contains at most $g(k)$ different directed cycle lengths.  
\end{conjecture}

Finally, for the sake of completeness, we note that the directed version of \zcref{thm:mindegree} is famously open and not yet known to hold for any minimum out-degree condition depending only on $k$.  

\begin{conjecture}[Lichiardopol \cite{lichiardopol14}]
    For every $k \geq 1$, there exists an integer $g(k)$ such that every digraph of minimum out-degree at least $g(k)$ contains $k$ vertex-disjoint directed cycles of distinct lengths. 
\end{conjecture}

\paragraph{$H$-minor-free graphs.}

In a different direction than cycles, it is natural to ask whether \zcref{thm:colouredverticesatdistanceEP} holds more generally for $H$-minor-free graphs. That is, we leave the following as an open problem.

\begin{question}
    Let $k,d \in \mathbb{N}^+$. Let $G$ be a graph forbidding $H$ as a minor and let $c$ be a colouring of $V(G)$. Do there exist functions $f,g$ depending only on $H$ such that there exist $k$ vertices given pairwise distinct colours by $c$ and pairwise lie at distance at least $d$, or there exists a set $X \subseteq V(G)$ of order at most $f(k,d)$ such that after deleting $N^{\lfloor d/2\rfloor}(X)$, $V(G)$ has at most $g(k)$ remaining colours?
\end{question}

It is not too hard to see how to do this ``locally'' by using \zcref{thm:colouredverticesatdistanceEP}. Graphs with a forbidden $H$-minor have a tree-decomposition into parts such that every part has bounded local treewidth after deleting a small number of apices (see \cite{Grohe2003Local}). Thus, for a single part, we can recolour all vertices within distance $\lfloor d/2\rfloor$ from the apices by a single special colour and apply \zcref{thm:colouredverticesatdistanceEP}, and then we can add these apices to any hitting set. It is unclear how to extend these local solutions ``globally'' along the entire tree-decomposition.

\medskip

\paragraph{Acknowledgements:} Much of the research presented here was carried out whilst a subset of the authors were attending the BWAG workshop in Bertinoro that took place in September of 2025 and the 2026 Barbados Graph Theory Workshop.
We thank the organisers and other participants of both workshops for providing productive atmospheres to work in.

%Additionally, we thank Jim Geelen for allowing us to present his proof of \zcref{thm:hightwgivesladder} here.

\bibliographystyle{alphaurl}
\bibliography{literature}

\newpage
 
\appendix

\section{An algorithmic proof of~\zcref{thm:main}}\label{sec:algoproof}
We now present an algorithmic proof of~\zcref{thm:main}.
The key ingredient is the following theorem of Visser and Bodlaender~\cite{visserbodlaender}, which shows that the cycle spectrum of a graph can be computed in fixed-parameter time in graphs of treewidth at most $k$. 

\begin{theorem}[Visser and Bodlaender \cite{visserbodlaender}] \label{lem:computecyclespectrum}
    Let $\ell,\gamma \in \mathbb{N}$ and let $G$ be a graph with $\mathsf{tw}(G) \leq \ell$.
    Then there is an algorithm that correctly decides if $\gamma \in \Lambda(G)$, which runs in time $O(18^{\ell}\gamma^4|V(G)|)$. 
\end{theorem}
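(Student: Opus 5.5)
The plan is to run dynamic programming over a nice tree-decomposition of $G$. First I would compute a tree-decomposition of width $\mathcal{O}(\ell)$; if the constant in the exponent matters, I would use an exact one of width $\ell$. I would then turn it into a nice tree-decomposition with $\mathcal{O}(\ell|V(G)|)$ nodes: leaf, introduce-vertex, introduce-edge, forget and join nodes. Deciding $\gamma \in \Lambda(G)$ means deciding whether $G$ has a subgraph $H$ in which every vertex has degree $0$ or $2$, the vertices of degree $2$ span exactly one connected component, and $|E(H)| = \gamma$. A partial solution at a node $t$ is a subgraph of $G_t$, restricted to the edges introduced so far, such that every forgotten vertex has degree $0$ or $2$ and every bag vertex has degree at most $2$. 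I would cap both the number of edges and the number of degree-$2$ vertices at $\gamma$, since a partial solution exceeding either cap can never become a $\gamma$-cycle.

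The naive state records, for each bag vertex, its degree in $\{0,1,2\}$ together with a matching of the degree-$1$ vertices that says which pairs are joined by partial paths. That gives $\ell^{\mathcal{O}(\ell)}$ states, which is too many for a single-exponential bound. To reach a bound of the form $c^{\ell}$, I would replace connectivity tracking by one of the two standard single-exponential techniques.

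The first is Cut\&Count. Here one counts pairs $(H, \text{cut})$, where each vertex of $H$ receives a side $L$ or $R$, every edge of $H$ stays within one side, and a fixed vertex is forced to $L$. The bag states are then $0$, $1_L$, $1_R$, $2_L$, $2_R$, giving $5^{\ell}$. A subgraph with $c$ components is counted $2^{c-1}$ times, so working modulo $2$ isolates the connected solutions. Vertex weights from the isolation lemma make the minimum-weight solution unique. The extra counters for edges and vertices, each up to $\gamma$, together with the bounded weight account for the polynomial factor in $\gamma$. Requiring the edge count to equal the vertex count, both equal to $\gamma$, forces the degree-$2$ structure to be exactly one cycle.

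The second option, which I would prefer because it is deterministic, is the rank-based approach. At each node and for each fixed degree pattern and each pair of counter values (edges so far, vertices so far), the families of connectivity partitions are pruned to representative subsets of size $2^{\mathcal{O}(\ell)}$ using the rank bound for the cut matrix. The join node is then handled by the usual fast subset-convolution-style combination. The table has at most $\gamma^2$ pairs of counter values per node. Combining two children at a join node costs $\gamma^2 \cdot \gamma^2$, which is where I expect the $\gamma^4$ to come from. Multiplying by the $\mathcal{O}(|V(G)|)$ nodes gives $\mathcal{O}(c^{\ell}\gamma^4|V(G)|)$.

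The routine steps are the introduce and forget transitions: update the degrees, check degree $0$ or $2$ when forgetting, and increment the counters. The main obstacle is the join node. The base $18$ has to come out of a careful count of the degree and cut states per vertex that combine consistently at a join. This is where I would try to follow Visser and Bodlaender's refinement of the Cut\&Count and rank-based state space, and check the constant against their analysis rather than rederive it.
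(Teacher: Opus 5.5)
The paper does not prove this statement. It quotes it from Visser and Bodlaender, and remarks that, if one ignores the dependence on $\ell$, it also follows from the Courcelle--Mosbah version of Courcelle's theorem. That version returns all possible sizes of a set satisfying an MSO formula; here the formula is ``$F$ is the edge set of a cycle''.

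Measured against the statement itself, your proposal has a genuine gap. The crux of the statement is the base $18$, and you explicitly defer it to ``Visser and Bodlaender's analysis'', i.e.\ to the result you are supposed to be proving. The rank-based method gives $c^{\ell}$ for a constant $c$ that depends on $\omega$ and on how the join is organised. For instance, take all pairwise joins of representative families over the six degree pairs $(d_1,d_2)$ with $d_1+d_2\le 2$, then reduce. This costs $1+4\cdot 2^{\omega-1}+6=7+2^{\omega+1}$ per bag vertex. That is below $18$ only when $\omega<\log_2 11-1\approx 2.46$, and it equals $23$ with cubic Gaussian elimination. Note also that the rank-based join is this pairwise combine-and-reduce step, not a subset convolution. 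Without such a count, $18^{\ell}$ is asserted rather than derived.

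Several steps also fail as written.
\begin{enumerate}
\item Your Cut\&Count variant is Monte Carlo, so it does not ``correctly decide''.
\item Vertex weights cannot isolate a cycle. Distinct cycles may share a vertex set (e.g.\ $K_{3,3}$ has six Hamiltonian cycles), and an even number of $\gamma$-cycles on the isolated vertex set cancels modulo $2$. The weights must be on edges.
\item The isolation lemma then needs weights from a range of size $\Theta(|E(G)|)$. So the weight coordinate of your table is not bounded in terms of $\gamma$, contrary to your claim.
\item As you set it up, forcing a fixed vertex into $L$ requires that vertex to lie on the cycle, i.e.\ guessing it.
\end{enumerate}
Items 3 and 4 each cost at least a factor $|V(G)|$, which destroys the linear dependence on $|V(G)|$.

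Your claim that $|E(H)|=|V(H)|=\gamma$ forces a single cycle is false: every $2$-regular graph has as many edges as vertices. Connectivity can only come from the parity argument.

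Finally, known algorithms cannot compute an exact width-$\ell$ decomposition within $O(18^{\ell}\gamma^4|V(G)|)$; they take $2^{\mathcal{O}(\ell^3)}|V(G)|$ or $2^{\mathcal{O}(\ell^2)}|V(G)|^4$. A width-$(2\ell+1)$ approximation roughly squares the base. So the statement must be read with a width-$\ell$ decomposition supplied as input.

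What your rank-based outline does support, given the decomposition, is a deterministic $c^{\ell}\gamma^{\mathcal{O}(1)}|V(G)|$ algorithm for some constant $c$. This requires writing out the transitions and handling the closing of the cycle at an introduce-edge node. Qualitatively that is all the paper needs. It is not the stated bound, however, and the base $18$ is exactly what feeds into the running time claimed for the main theorem.
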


% \begin{lemma}\label{lem:computecyclespectrum}
%     Let $k \in \mathbb{N}$ and let $G$ be a graph with $\mathsf{tw}(G) \leq k$.
%     Then the set $\Lambda(G)$ can be computed in time $2^{\mathcal{O}(k^2)}|V(G)|^4$.
% \end{lemma}

If one does not care about the dependence on $\ell$, then~\zcref{lem:computecyclespectrum} also follows from a version of Courcelle's Theorem~\cite{courcellemosbah}, which can output the set of all possible sizes of a set $X$ satisfying an MSO-formula $\varphi$ on bounded treewidth graphs.

% The proof of \zcref{lem:computecyclespectrum} uses dynamic programming on tree-decompositions.  This is rather tedious and will occupy the majority of this section.  To shorten the exposition, we leave out standard arguments for the correctness of the dynamic programming updates we will present.

% We lay out most of the details involved after the proof of the next lemma, since the process for computing the cycle spectrum is quite technical.

% Assuming \zcref{lem:computecyclespectrum}, it is not hard to prove an algorithmic version of \zcref{lem:lowtwdistinctlengthep}. 
% However, we will also need to \textsl{find} representative cycles for all the lengths in the spectrum of a graph with bounded treewidth and a small spectrum in said proof.
% This could be done in the proof of \zcref{lem:computecyclespectrum} itself, as a part of the dynamic program, but would increase the complexity of the description of said dynamic program to a level which we did not find particularly instructive.
% Therefore, we opt for a slightly more brute-force approach to resolve this problem at the cost of a somewhat worse runtime for the algorithmic version of \zcref{lem:lowtwdistinctlengthep} we are about to present.

We also need to efficiently find cycles of each length in the cycle spectrum of a graph with bounded treewidth.
The algorithm of Visser and Bodlaender~\cite{visserbodlaender} does not output these cycles, although it can probably be adapted to do so.  However, the following naive approach works, at the cost of a slightly worse runtime.

\begin{lemma}
    \label{lem:computecycleofgivenlength}
    Let~${\ell \in \mathbb{N}}$ and let~$G$ be a graph with~${\mathsf{tw}(G) \leq \ell}$. 
    Suppose~${\gamma \in \Lambda(G)}$. 
    Then we can find a cycle of length~$\gamma$ in time~$O(\ell18^{\ell}\gamma^5|V(G)|^2)$. 
\end{lemma}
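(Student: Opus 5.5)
We find the cycle by self-reduction: repeatedly call the decision algorithm of \zcref{lem:computecyclespectrum} as an oracle and delete edges whose removal keeps $\gamma$ in the spectrum. Every subgraph of $G$ still has treewidth at most $\ell$, so each oracle call runs in time $O(18^{\ell}\gamma^4|V(G)|)$.

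Start with $H \coloneqq G$, which satisfies $\gamma \in \Lambda(H)$ by assumption. First discard all vertices of degree $0$, and then all isolated vertices created later; this does not affect the spectrum. Then process the edges of $G$ one at a time in an arbitrary fixed order. For the current edge $e$, test whether $\gamma \in \Lambda(H - e)$. If so, replace $H$ by $H-e$; otherwise keep $e$.

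The invariant is that $\gamma \in \Lambda(H)$ throughout. At the end, every remaining edge $e$ was kept because $\gamma \notin \Lambda(H' - e)$ for the graph $H'$ current at that moment. The final $H$ is a subgraph of $H'$ that contains $e$. If some cycle $C$ of length $\gamma$ in the final $H$ avoided $e$, then $C$ would lie in $H'-e$, which is impossible. So every edge of the final $H$ lies on every $\gamma$-cycle of $H$.

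Now take any $\gamma$-cycle $C$ in the final $H$, which exists by the invariant. Every edge of $H$ lies in $C$, so $E(H)=E(C)$. After removing isolated vertices, $H=C$, and we simply output $H$.

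The running time is at most $|E(G)|$ oracle calls. Since $\mathsf{tw}(G)\le \ell$, we have $|E(G)| \le \ell|V(G)|$. The total is therefore $O(\ell|V(G)|\cdot 18^{\ell}\gamma^4|V(G)|)$, plus linear bookkeeping, which fits within the claimed bound $O(\ell18^{\ell}\gamma^5|V(G)|^2)$.

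The one step needing care is the minimality argument above. It works because each edge is tested only once, and later deletions can only remove cycles, never create them. The $\gamma^5$ in the claimed bound leaves room if the oracle is instead run on a graph whose size depends on $\gamma$, but this is not needed here.
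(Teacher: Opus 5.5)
Your proof is correct and uses the same self-reduction as the paper: delete edges while the Visser--Bodlaender decision algorithm certifies that $\gamma$ stays in the spectrum. The paper schedules deletions differently, running $|E(G)|-\gamma$ rounds that each test $\gamma+1$ edges and relying on pigeonhole (a fixed $\gamma$-cycle misses one of them) to guarantee a deletable edge, which is where its extra factor $\gamma$ comes from; your single pass, justified by the observation that deletions never create cycles, needs only $|E(G)|\le \ell|V(G)|$ oracle calls and so yields the sharper bound $O(\ell\,18^{\ell}\gamma^4|V(G)|^2)$.
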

\begin{proof}
    If $|E(G)|=\gamma$, then $E(G)$ is the set of edges of a cycle of length $\gamma$, so we are done.  Thus, we may assume that $|E(G)| > \gamma$.  By applying \zcref{lem:computecyclespectrum} to~${G - e}$ for $\gamma+1$ arbitrary  edges $e$ of $G$ we can find in time $O(18^{\ell}\gamma^4|V(G)|(\gamma+1))$  an edge~${e \in E(G)}$ for which~${\gamma \in \Lambda(G-e)}$. 
    Iterating this $|E(G)|-\gamma$ times, we reduce to the case that $|E(G)|=\gamma$.  Thus, in time $O(18^{\ell}\gamma^4|V(G)|(\gamma+1) |E(G)|)\subseteq O(\ell18^{\ell}\gamma^5|V(G)|^2)$ we can find a cycle of length~$\gamma$ in~$G$. 
\end{proof}

    % Let~${n \coloneqq |V(G)|}$. 
    % If~${n = \ell}$, let~${G' \coloneqq G}$. 
    % Suppose~${n > \ell}$. 
    % By applying \zcref{lem:computecyclespectrum} to~${G - v}$ for each vertex~${v \in V(G)}$, we can find in time~$2^{\mathcal{O}(k^2)} n^5$ a vertex~${v \in V(G)}$ for which~${\ell \in \Lambda(G-v)}$. 
    % Iterating this process~${n - \ell}$ times, we find in time~$2^{\mathcal{O}(k^2)}n^6$ a graph~$G'$ with~$\ell$ vertices and such that~${\ell \in \Lambda(G')}$. 
    % Since~$G'$ still has treewidth at most~$k$, $G'$ has fewer than~${\ell k}$ edges. 
    % We apply \zcref{lem:computecyclespectrum} to~${G' - e}$ for each edge~${e \in E(G)}$ to find in time~$2^{\mathcal{O}(k^2)} \ell^5$ an edge~${e \in E(G)}$ for which~${\ell \in \Lambda(G'-e)}$. 
    % Iterating this process at most~${k\ell - \ell}$ times, we find in time~$2^{\mathcal{O}(k^2)}\ell^6$ a cycle of length~$\ell$ in~$G'$. 
    % Since~$\ell \leq n$, we can find the desired cycle in time $2^{\mathcal{O}(k^2)} n^6$. 

Finally, we also require the following elementary fact about (rooted) tree-decompositions.

\begin{lemma}\label{lem:treedecompseparators}
    Let $G$ be a graph with a rooted tree-decomposition $(T,r,\beta)$.
    Then for every vertex ${t \in V(T)}$, within ${G - \beta(t)}$ there is no path from any vertex in $V(G_t - \beta(t))$ to any vertex in $\bigcup_{s \in V(T) \setminus V(T_t)} \beta(s)$.
\end{lemma}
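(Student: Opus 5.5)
The plan is to show directly that $\beta(t)$ separates the two vertex sets, using only the two axioms of a tree-decomposition. Suppose, for contradiction, that there is a path $P = x_0 x_1 \dots x_m$ in $G - \beta(t)$ with $x_0 \in V(G_t - \beta(t))$ and $x_m \in \bigcup_{s \in V(T)\setminus V(T_t)} \beta(s)$. No vertex of $P$ lies in $\beta(t)$, so for each $i$ the subtree $\beta^{-1}(x_i)$ is a connected non-empty subtree of $T$ avoiding $t$.

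The first step is to observe that $T - t$ splits into components, and $T_t - t$ is a union of some of these components while $T - V(T_t)$ is a single further component (when $t \neq r$). The latter contains the parent of $t$; when $t = r$ the set $V(T)\setminus V(T_t)$ is empty and the statement is vacuous. Because $\beta^{-1}(x_i)$ is connected and avoids $t$, it lies entirely inside one component of $T - t$. Hence each $x_i$ is either only in bags of $T_t$ or only in bags outside $T_t$.

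Next I would place the endpoints. Since $x_0 \in V(G_t)$, it lies in some bag of $T_t$, so $\beta^{-1}(x_0) \subseteq V(T_t) \setminus \{t\}$. Since $x_m$ lies in some bag outside $T_t$, we get $\beta^{-1}(x_m) \subseteq V(T)\setminus V(T_t)$. Therefore there is an index $i$ with $\beta^{-1}(x_i) \subseteq V(T_t)$ and $\beta^{-1}(x_{i+1}) \subseteq V(T)\setminus V(T_t)$.

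Finally, the edge $x_i x_{i+1}$ must lie in some common bag $\beta(s)$, by the edge axiom. But $s$ would then belong to both $V(T_t)$ and its complement, which is a contradiction.

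The only delicate point is the second step: one must use that $x_i \notin \beta(t)$ to ensure that the subtree of $x_i$ cannot straddle $T_t$ and its complement, since the only edge of $T$ between them is the edge from $t$ to its parent. The rest is routine.
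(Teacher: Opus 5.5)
Your proof is correct: each vertex of a path in $G-\beta(t)$ has its subtree $\beta^{-1}(x_i)$ inside a single component of $T-t$, and no such component meets both $V(T_t)\setminus\{t\}$ and $V(T)\setminus V(T_t)$. So the edge axiom forbids any edge of the path from crossing between the two sides. In the one-vertex case $m=0$ your ``index $i$'' does not exist, but that case is excluded immediately, because the non-empty set $\beta^{-1}(x_0)$ cannot lie in both of these disjoint sets.

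The paper gives no proof of this lemma; it quotes it as an elementary fact about tree-decompositions. Your argument is the standard separation argument it implicitly relies on.
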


This brings us to the promised algorithmic version of \zcref{lem:lowtwdistinctlengthep}.

\begin{lemma}\label{lem:lowtwdistinctlengthepalgo}
    Let $k \in \mathbb{N}^+$ and let $G$ be a graph of treewidth at most $\ell$.
    Then $G$ contains $k$ pairwise vertex-disjoint cycles of pairwise distinct lengths, or there exists a set $X \subseteq V(G)$ with $|X| \leq (k-1)(\ell+1)$ such that $|\Lambda(G - X)| \leq k - 1$.
    Moreover, there exists an algorithm running in time $O(\ell 18^{\ell}\mathsf{poly}(k)|V(G)|^7)$ that finds either the $k$ cycles or the set $X$ with the desired properties.
\end{lemma}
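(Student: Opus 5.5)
We make the inductive proof of \zcref{lem:rainbowtreepacking}, as applied in \zcref{lem:lowtwdistinctlengthep}, constructive. The existential statement is exactly \zcref{lem:lowtwdistinctlengthep}, so only the algorithm needs work.

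First we compute a tree-decomposition of $G$ of width $O(\ell)$ (or exactly $\ell$ with Bodlaender's algorithm) in time $2^{O(\ell^3)}|V(G)|$, or in single-exponential time with a constant-factor approximation; this only affects constants in the exponent. We root it at $r$ and make it nice, so that $|V(T)| = O(\ell|V(G)|)$.

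The algorithm then follows the leaf-peeling induction in a bottom-up order. We maintain a set $L$ of forbidden lengths (initially empty), a partial packing $\mathcal{C}$ and a partial hitting set $X'$ of tree nodes. We process the nodes $t$ of $T$ in post-order. At node $t$, let $D_t$ be the graph $G_t - \bigcup_{s \in X'} \beta(s)$, restricted to the part not yet ``cut off''. Concretely, we take the subgraph induced by vertices of $G_t$ that do not lie in any bag $\beta(s)$ with $s\in X'$. We ask whether $D_t$ contains a cycle whose length is not in $L$.

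A cycle of $D_t$ corresponds to a subtree of $T_t$ avoiding $X'$. Such a cycle exists exactly when, in the proof of \zcref{lem:rainbowtreepacking}, some remaining subtree lies entirely in the processed part. In that case we do the following:
\begin{itemize}
\item add $t$ to $X'$;
\item use \zcref{lem:computecyclespectrum}, iterating over candidate lengths $\gamma \le |V(G)|$ not in $L$, to find such a length;
\item extract a cycle of that length with \zcref{lem:computecycleofgivenlength};
\item add that cycle to $\mathcal{C}$ and its length to $L$.
\end{itemize}
We stop once $|\mathcal{C}| = k$.

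\zcref{lem:treedecompseparators} is what justifies correctness. After deleting $\beta(t)$, the vertices of $G_t - \beta(t)$ are separated from the rest of $G$. Hence every later cycle avoiding $\bigcup_{s\in X'}\beta(s)$ is vertex-disjoint from the cycles already chosen, since each chosen cycle lies in $G_t$ for some $t\in X'$ and meets no vertex outside. This gives the disjointness of $\mathcal{C}$.

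Conversely, suppose we finish with $|\mathcal{C}|\le k-1$, and set $X = \bigcup_{s\in X'}\beta(s)$, so that $|X| \le (k-1)(\ell+1)$. Any cycle of $G - X$ has a highest node $t$ in its subtree, and it lies in $G_t$. When $t$ was processed, the cycle was present in $D_t$, so its length must already have been in $L$. Otherwise we would have selected $t$, and then $t \in X'$ would hit the cycle. Therefore $\Lambda(G-X) \subseteq L$, which has size at most $k-1$.

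One subtlety is that $D_t$ should avoid the bags of previously chosen nodes only. A cycle in $D_t$ might also pass through vertices in $\beta(t)$ shared with ancestors, but this is fine, since $t$ itself is then added.

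For the running time, there are $O(\ell |V(G)|)$ nodes. At each node we make at most $|V(G)|$ calls to \zcref{lem:computecyclespectrum}, each with $\gamma\le|V(G)|$, costing $O(18^\ell|V(G)|^5)$. This gives $O(\ell 18^\ell |V(G)|^7)$ in total. The at most $k$ calls to \zcref{lem:computecycleofgivenlength} are dominated by this, and the $\mathsf{poly}(k)$ factor absorbs the bookkeeping. The main obstacle is getting the invariant in the correctness argument exactly right, namely that a cycle skipped at its top node because its length lies in $L$ never needs to be hit. This is exactly where the colour-removal step of \zcref{lem:rainbowtreepacking} is used.
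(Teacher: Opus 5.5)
Your proposal is correct. Its skeleton matches the paper's: process a rooted width-$\ell$ decomposition bottom-up, select a node when the part below it still has a cycle of a new length, let $X$ be the union of the selected bags, and bound $\Lambda(G-X)$ via the highest node of a cycle.

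\textbf{Different selection rule.} You commit to one length per selected node and keep a fixed forbidden set $L$, which directly simulates the greedy proof of \zcref{lem:rainbowtreepacking}. The paper instead stores the full spectra $\Lambda(H_1),\dots,\Lambda(H_{k_i})$ of the disjoint pieces $H_j=H(z_j,\{z_1,\dots,z_{j-1}\})$. It selects $t_{i+1}$ whenever these, together with $\Lambda(H')$, admit a system of distinct representatives, tested by bipartite matching, and it extracts the cycles only at the end. Your version needs no matching. Its correctness argument is also cleaner, since the invariant concerns one set $L$ of size less than $k$ rather than a family of tuples $\mathcal{P}_i$. The paper's rule is more permissive: it can select a node all of whose lengths lie in your current $L$ by re-choosing earlier representatives. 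The either/or conclusion does not need this. Both routes give $|X|\le(k-1)(\ell+1)$ and the same running time.

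\textbf{Definition of $D_t$ and disjointness.} Your concrete $D_t=G_t-\bigcup_{s\in X'}\beta(s)$ does not remove the parts $G_s-\beta(s)$ below previously selected descendants $s$ of $t$. The paper's $H(t,X')$, built from the component of $T_t-X'$ containing $t$, does remove them, so there disjointness follows directly from \zcref{lem:treedecompseparators}. With your definition, separation only shows that a later cycle meeting the cycle chosen at $s$ must lie inside $G_s-\beta(s)$; it does not rule such a cycle out. Your converse-paragraph invariant does rule it out. A cycle of $D_t$ whose length is not in $L$ cannot have its highest node strictly below $t$. So its subtree contains $t$, and the cycle meets $\beta(t)$. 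A cycle chosen at a later node $t''$ and lying in $G_s-\beta(s)$ would therefore force $t''$ to be a proper descendant of $s$. That is impossible in post-order, and you should add this step.

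\textbf{Computing the decomposition.} This step does affect the exact statement. A width-$c\ell$ approximation only gives $|X|\le(k-1)(c\ell+1)$ and replaces $18^{\ell}$ by $18^{c\ell}$. Bodlaender's exact algorithm adds a $2^{O(\ell^3)}|V(G)|$ term outside the stated bound. The paper simply takes a width-$\ell$ decomposition as given. You should do the same, or state the extra term.
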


    % In particular, there exists an algorithm running in time $2^{\mathcal{O}(\ell^2)}\mathsf{poly}(k)|V(G)|^6$ that finds either the $k$ cycles or the set $X$ with the desired properties.
\begin{proof}
    Let $(T,r,\beta)$ be a tree-decomposition of~$G$ with width at most~$\ell$ and an arbitrarily chosen root~$r$, and let~${\lvert V(T) \rvert = a}$. 
    Let~$(t_1, \dots, t_{a})$ be an enumeration of the nodes of~$T$ such that~${t_{a} = r}$ and~$t_i$ is a leaf of $T-\{t_1, \dots, t_{i-1}\}$ for each~${i \in [a-1]}$. 
    We process the tree-decomposition in order of this enumeration to either find a desired selection of $k$ cycles, in which case we stop the process, or find the set~$X$. 

    For~$Z \subseteq V(T)$ and~$t \in V(T) \setminus Z$, let~$T(t,Z)$ denote the component of~$T_t - Z$ containing~$t$ and let~$H(t,Z)$ denote~$G[\bigcup_{s \in V(T(t,Z))} \beta(s)] - \bigcup_{s \in Z} \beta(s)$. 

    We initialise~${k_0 = 0}$. 
    Now for some~${i \in \{0, \dots, {a-1}\}}$, suppose we have already defined a non-negative integer~$k_i$ and a tuple $(z_1, \dots, z_{k_i})$ of~$k_i$ distinct elements in~$\{t_1, \dots, t_i\}$ such that
    \begin{enumerate}
        \item the tuple~$(H_1, \dots, H_{k_i})$, where~$H_j = H(z_j,\{z_1, \dots, z_{j-1}\})$, is a tuple of pairwise vertex-disjoint subgraphs, 
        \item the set~$\mathcal{P}_i$ of all $k_i$-tuples in~$\Lambda(H_1) \times \dots \times \Lambda(H_{k_i})$ with~$k_i$ distinct elements is non-empty.
    \end{enumerate}
    Now consider~${H' \coloneqq H(t_{i+1}, \{z_1, \dots, z_{k_i}\})}$. 
    If there exists~${x \in \Lambda(H')}$ and~${(x_1,\dots,x_{k_i}) \in \mathcal{P}_i}$ such that~${x \neq x_j}$ for all~${j \in [k_i]}$, then we set~${k_{i+1} \coloneqq k_i + 1}$ and~${z_{k_{i+1}} \coloneqq t_{i+1}}$. 
    Then, clearly, the second property is satisfied. For the first property, observe that by \zcref{lem:treedecompseparators}, $H'$ is vertex-disjoint from each of~$H_1, \dots, H_{k_i}$. 
    Otherwise, we set~${k_{i+1} \coloneqq k_i}$. 

    Note that we do not have to store $\mathcal{P}_i$ directly, which may have size $\mathcal{O}(n^k)$ if each $\Lambda(H_j)$ has size $n$. We can instead store $\Lambda(H_j)$ for each $j \in [k_i]$. Let $n=|V(G)|$. To find whether there exists $x \in \Lambda(H')$ and $x_1, \dots, x_{k_i}$ with $x_j \in \Lambda(H_j)$ all pairwise distinct, we construct a bipartite graph with one side $\{3,4,\dots,n\}$ and the other side $\Lambda(H_1), \dots, \Lambda(H_{k_i}), \Lambda(H')$, where a set is adjacent to the lengths it contains.
     A matching in this graph covering the sets gives the desired distinct cycle lengths.

    We observe that for each~${i \in [a]}$, by construction, the union~$H_1 \cup \dots \cup H_{k_i} \subseteq G$ contains $k_i$ pairwise vertex-disjoint cycles of pairwise distinct length, and we can find these cycles in the desired time by \zcref{lem:computecycleofgivenlength}.
    So if~${k_{a} \geq k}$, we are done. 
    So suppose~${k_{a} \leq k-1}$. 
    We claim that the set~${X \coloneqq \beta(z_1) \cup \dots \cup \beta(z_{k_{a}})}$ is as desired. 
    Indeed, the set has size at most~${(k-1)(\ell+1)}$. 
    Suppose towards a contradiction that~${\lvert \Lambda(G - X) \rvert \geq k}$. 
    Then, in particular, for each~${i \in [a]}$ and each~${(x_1, \dots, x_{k_i}) \in \mathcal{P}_i}$ there exists a cycle~$C$ in~$G - X$ whose length is distinct from~${x_1, \dots, x_{k_i}}$. 
    Let~$j$ be maximal such that~${C \subseteq G_{t_j} - X}$. 
    In particular, $C$ intersects~$\beta(t_j)$. 
    But then, $C$ is a subgraph of~$H(t_j,\{z_1,\dots,z_{k_{j-1}}\})$. 
By construction, $t_j$ would have been added to the tuple $(z_1,\dots,z_{k_a})$ when it was processed, contradicting that $C$ is disjoint from $X$.
\end{proof}

By combining~\zcref{lem:lowtwdistinctlengthepalgo} with~\zcref{cor:hightwcyclepacking}, we obtain an algorithmic version of~\zcref{thm:main} with runtime $\mathcal{O}(18^{k^5} \mathsf{poly}(k) |V(G)|^7)$, as claimed.  

\section{Proof of~\zcref{lem:bndtwcolouredverticesatdistanceEP}}\label{sec:facesapp}
Here we present the proof of \zcref{lem:bndtwcolouredverticesatdistanceEP}.  We use the following state-of-the-art algorithm to compute a tree-decomposition of width at most $k$.

\begin{theorem}[Korhonen and Lokshtanov \cite{KorhonenL2023Improved}]\label{thm:computetwexact}
    There exists an algorithm that takes as input a graph $G$ and an integer $k$, and in time $2^{\mathcal{O}(k^2)}|V(G)|^4$ either outputs a tree-decomposition of $G$ of width at most $k$ or concludes that the treewidth of $G$ is larger than $k$.
\end{theorem}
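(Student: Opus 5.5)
Since this statement is imported from Korhonen and Lokshtanov, the paper needs no proof of it. If I had to reconstruct one, I would use the approach of \emph{approximate-then-improve}. The first step computes a rough tree-decomposition. Either via Korhonen's $2$-approximation, or via the classical Robertson--Seymour style separator algorithm in time $2^{\mathcal{O}(k)}|V(G)|^2$, I would obtain a tree-decomposition of width $\mathcal{O}(k)$, or conclude that $\mathsf{tw}(G) > k$. The only tool needed here is that balanced separators of small bags can be found by brute force over partitions of a bag of size $\mathcal{O}(k)$, combined with max-flow computations. If this step fails, we already output that the treewidth exceeds $k$.

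The second step repeatedly lowers the width of the current decomposition $(T,\beta)$ until it is at most $k$. I would pick a bag $\beta(t)$ of maximum size $w+1 > k+1$. I would then try to find a \emph{refinement}: a separation, of order at most $k$, of the torso around $t$ that splits $\beta(t)$ into strictly smaller pieces. The parts of $T$ hanging off $t$ are then rerouted along this separation. Doing so must strictly decrease the number of bags of size $w+1$ without creating new large bags. The invariant to maintain is a potential $\Phi = \sum_t 2^{|\beta(t)|}$, or a lexicographic count of bag sizes, which each improvement strictly decreases. Polynomially many improvement steps then suffice, and this is where a factor of roughly $|V(G)|^2$ in the running time comes from.

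Finding a suitable refinement is the key subroutine. I would guess the trace of an optimal width-$k$ decomposition on the large bag $\beta(t)$: the ways a bag of size $\mathcal{O}(k)$ can be split by separators of order at most $k$ number $2^{\mathcal{O}(k^2)}$. For each guess, I would compute minimum-order separations between the guessed pieces by max-flow, and use submodularity of the separation order function to take a "closest" minimum separation. This ensures that rerouting subtrees does not enlarge any other bag. If no guess yields an improvement, a lemma would have to show that $\mathsf{tw}(G) > k$. The other factors of $|V(G)|$ come from flow computations and from rebuilding the decomposition.

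The main obstacle is the \emph{non-increase} argument: proving that the rerouted decomposition never creates a new bag of size at least $w+1$ elsewhere, and that the potential genuinely drops. I would first try the standard "pushing" and submodularity argument from Bodlaender--Kloks and from Korhonen's improvement lemma. Concretely, if a separation $(A,B)$ of minimum order splits $\beta(t)$, then for every other bag $\beta(s)$ the sets $\beta(s)\cap A$ and $\beta(s)\cap B$ augmented by $A\cap B$ can be bounded by the original $|\beta(s)|$ by uncrossing with the subtree separations. Beyond this, I would take the result as given by \cite{KorhonenL2023Improved}.
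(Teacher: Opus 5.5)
Your proposal matches the paper: the statement is imported from Korhonen and Lokshtanov and the paper gives no proof, so citing \cite{KorhonenL2023Improved}, as you ultimately do, is all that is required. Your reconstruction sketch would not stand on its own as a proof. The balanced-separator argument guarantees a single separation that splits the largest bag into strictly smaller pieces only while the width exceeds $2k+1$, which is Korhonen's $2$-approximation regime, and getting from there down to width exactly $k$ is precisely the new content of the cited paper.
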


% The techniques used here are very similar to what we present in \zcref{sec:algoproof}, though we have to add a few more tricks to adjust to the specific setting of this statement.
% We note that finding the cycles of specific colours is much easier in this particular case, causing our runtime to drop compared to \zcref{lem:lowtwdistinctlengthepalgo}.

\begin{replemma}{lem:bndtwcolouredverticesatdistanceEP}
    Let $k,d \in \mathbb{N}^+$. Let $G$ be a graph with $\mathsf{tw}(G) \leq \ell$, and let $c$ be a colouring of $V(G)$. Then there exist $k$ vertices that are given pairwise distinct colours by $c$ and pairwise lie at distance at least $d$, or there exists a set $X \subseteq V(G)$ with $|X| \leq (k-1)(\ell + 1)$ such that $|\{c(v) \mid v \in V(G - N^{\lfloor d/2\rfloor}(X))\}| \leq k-1$.

    In particular, there exists an algorithm running in time $2^{\mathcal{O}(\ell^2)}\mathsf{poly}(k,d,\ell)|V(G)|^4$ that finds either the packing of $k$ vertices or the set $X$ with the properties promised above.
\end{replemma}
\begin{proof}
    We may assume that $d$ is odd; otherwise, we apply the result to $d+1$. Hence, for $u,v \in V(G)$, $u$ and $v$ are at distance at least $d$ if and only if $N^{\lfloor d/2\rfloor}(u) \cap N^{\lfloor d/2\rfloor}(v) = \emptyset$. Therefore, to get a short non-algorithmic proof of the lemma, we may proceed as in the proof of \zcref{lem:lowtwdistinctlengthep} using $N^{\lfloor d/2\rfloor}(v)$ instead of the cycles. We now give an algorithmic proof analogous to \zcref{lem:lowtwdistinctlengthepalgo}.

    Let $(T,r,\beta)$ be a rooted tree-decomposition of $G$ with width at most $\ell$. Let $|V(T)| = a$ and let $(t_1, \dots, t_a)$ be an enumeration of the nodes of $T$ such that $t_a = r$ and $t_i$ is a leaf of $T - \{t_1, \dots, t_{i-1}\}$ for each $i \in [a-1]$. We process the tree-decomposition in order of this enumeration to either find a desired selection of $k$ vertices, in which case we stop the process, or the set $X$.

    For an induced subgraph~$H$ of~$G$, let~$C(H)$ denote the set~$\{ c(v) \mid N^{\lfloor d/2 \rfloor}_G(v) \subseteq V(H)\}$.

    For~$Z \subseteq V(T)$ and~$t \in V(T) \setminus Z$, let~$T(t,Z)$ denote the component of~$T_t - Z$ containing~$t$ and let~$H(t,Z)$ denote~$G[\bigcup_{s \in V(T(t,Z))} \beta(s)] - \bigcup_{z \in Z}\beta(z)$. 

    We initialise~${k_0 = 0}$. 
    Now for some~${i \in \{0, \dots, {a-1}\}}$, assume that we have already defined a non-negative integer~$k_i$ and a tuple $(z_1, \dots, z_{k_i})$ of~$k_i$ distinct elements in~$\{t_1, \dots, t_i\}$ such that
    \begin{enumerate}
        \item the tuple~$(H_1, \dots, H_{k_i})$, where~$H_j = H(z_j,\{z_1, \dots, z_{j-1}\})$, is a tuple of vertex-disjoint subgraphs, 
        \item the set~$\mathcal{P}_i$ of all $k_i$-tuples in~$C(H_1) \times \dots \times C(H_{k_i})$ with~$k_i$ distinct elements is non-empty. 
    \end{enumerate}
    Now consider~${H' \coloneqq H(t_{i+1}, \{z_1, \dots, z_{k_i}\})}$. 
    If there exists~${x \in C(H')}$ and~${(x_1,\dots,x_{k_i}) \in \mathcal{P}_i}$ such that~${x \neq x_j}$ for all~${j \in [k_i]}$, then we set~${k_{i+1} \coloneqq k_i + 1}$ and~${z_{k_{i+1}} \coloneqq t_{i+1}}$. 
    Then, clearly, the second property is satisfied.
    For the first property, observe that by \zcref{lem:treedecompseparators}, $H'$ is vertex-disjoint from each of~$H_1, \dots, H_{k_i}$.
    Otherwise, we set~${k_{i+1} \coloneqq k_i}$. 

    Note that, as in the proof of \zcref{lem:lowtwdistinctlengthepalgo}, we do not need to store~$\mathcal{P}_i$ directly. 
    Storing only each individual~$C(H_1), \dots, C(H_{k_i})$ allows us to recover an element of~$\mathcal{P}_i$ by finding a matching in the bipartite graph with colours on one side, the sets~$C(H_1), \dots, C(H_{k_i})$ on the other side, and with an edge between a colour~$a$ and~$C(H_j)$ if~${a \in C(H_j)}$.

    We observe that for each~${i \in [a]}$, by construction, the union~$H_1 \cup \dots \cup H_{k_i} \subseteq G$ contains a packing of vertices as desired. 
    So if~${k_{a} \geq k}$, we are done. 
    So suppose~${k_{a} \leq k-1}$. 
    We claim that the set~${X \coloneqq \beta(z_1) \cup \dots \cup \beta(z_{k_{a}})}$ is as desired. 
    Indeed, the set has size at most~${(k-1)(\ell+1)}$. 
    Suppose towards a contradiction that~${|\{c(v) \mid v \in V(G - N^{\lfloor d/2\rfloor}(X))\}| \geq k}$. 
    Then, in particular, for each~${i \in [a]}$ and each~${(x_1, \dots, x_{k_i}) \in \mathcal{P}_i}$ there exists a vertex $v \in V(G - N^{\lfloor d/2\rfloor}(X))$ with~$c(v) \notin \{x_1, \dots, x_{k_i}\}$. 

    Let~$j \in [a]$ be minimal such that~${v \in V(G_{t_j} - N^{\lfloor d/2 \rfloor}(X))}$. 
    In particular, $v \in \beta(t_j)$. 
    But then, $N^{\lfloor d/2\rfloor}_G(v)$ is a subset of~$V(H(t_j,\{z_1,\dots,z_{k_{j-1}}\}))$. 
    By construction, $t_j$ would have been added to the tuple $(z_1,\dots,z_{k_a})$ when it was processed, contradicting that $v \notin N^{\lfloor d/2\rfloor}_G(X)$.

    By \zcref{thm:computetwexact}, we can find an optimal tree-decomposition in $2^{\mathcal{O}(\ell^2)}|V(G)|^4$ time. Then, for each bag in the tree, we may need to compute $C(H(t,Z))$ for some particular $Z$. We can use breadth-first search to determine whether a vertex $v$ has $N^{\lfloor d/2\rfloor}_G(v) \subseteq V(H)$, and hence we can compute $C(H(t,Z))$ in $\mathcal{O}(|V(G)|^2)$ time. We then may have to solve a matching problem in a bipartite graph with $\mathcal{O}(|V(G)|)$ vertices, which can be done in $\mathcal{O}(|V(G)|^{3})$ time (and in fact much faster). Thus the total runtime is bounded by $2^{\mathcal{O}(\ell^2)}\mathsf{poly}(k,d,\ell)|V(G)|^4$.%\CM{it's unclear to me whether we can drop the $\mathsf{poly}(k,d,\ell)$.}\MG{Same. But I would suggest to just keep it. We are doing FPT-stuff here anyway.}
\end{proof}

\end{document}